\newcommand{\prob}{{\bf P}}
\newcommand{\e}{{\bf E}}
\newcommand{\bae}{\begin{equation}\begin{aligned}}
		\newcommand{\eae}{\end{aligned}\end{equation}}
\newcommand{\beq}{\begin{equation}}
	\newcommand{\eeq}{\end{equation}}
\newtheorem{theorem}{Theorem}[section]
\newtheorem{lemma}[theorem]{Lemma}
\newtheorem{proposition}[theorem]{Proposition}
\theoremstyle{definition}
\newtheorem{assumption}{A\!\!}
\theoremstyle{remark}
\newtheorem{remark}[theorem]{Remark}
\numberwithin{equation}{section}
\title{A Forward Propagation Algorithm for Online Optimization of Nonlinear Stochastic Differential Equations}
\author{Ziheng Wang\footnote{Mathematical Institute, University of Oxford, Oxford, OX2 6GG, UK (wangz1@math.ox.ac.uk)} \ and Justin Sirignano\footnote{Mathematical Institute, University of Oxford, Oxford, OX2 6GG, UK (Justin.Sirignano@maths.ox.ac.uk).}}
\begin{document}

\maketitle

\begin{abstract}
Optimizing over the stationary distribution of stochastic differential equations (SDEs) is computationally challenging. \cite{wang2022continuous} proposed a new forward propagation algorithm for the online optimization of SDEs. The algorithm solves an SDE, derived using forward differentiation, which provides a stochastic estimate for the gradient. The algorithm continuously updates the SDE model’s parameters and the gradient estimate simultaneously. This paper studies the convergence of the forward propagation algorithm for nonlinear dissipative SDEs. We leverage the ergodicity of this class of nonlinear SDEs to characterize the convergence rate of the transition semi-group and its derivatives. Then, we prove bounds on the solution of a Poisson partial differential equation (PDE) for the expected time integral of the algorithm's stochastic fluctuations around the direction of
steepest descent. We then re-write the algorithm using the PDE solution, which allows us to characterize the parameter evolution around the direction of steepest descent. Our main result is a convergence theorem for the forward propagation algorithm for nonlinear dissipative SDEs.
\end{abstract}

%Recently, a new forward propagation algorithm has been proposed for the online optimization of SDEs. The algorithm solves an SDE, derived using forward differentiation, which provides a stochastic estimate for the gradient. The algorithm continuously updates the SDE model’s parameters and the gradient estimate simultaneously. This paper studies the convergence of the forward propagation algorithm for nonlinear dissipative SDEs. We leverage the ergodicity of this class of nonlinear SDEs to characterize the convergence rate of the transition semi-group and its derivatives. Then, we prove bounds on the solution of a Poisson partial differential equation (PDE) for the expected time integral of the algorithm's stochastic fluctuations around the direction of steepest descent. We then re-write the algorithm using the PDE solution, which allows us to characterize the parameter evolution around the direction of steepest descent. Our main result is a convergence theorem for the forward propagation algorithm for nonlinear dissipative SDEs.

\section{Introduction}

\hspace{1.4em} 

Optimizing over the stationary distribution of a stochastic process is a challenging mathematical and computational problem. Consider a parameterized process $X^{\theta, x}_t \in \mathbb{R}^d$ which satisfies the stochastic differential equation (SDE):
\bae
\label{ergodic process}
dX_t^{\theta, x} &= \mu(X_t^{\theta, x}, \theta) dt + \sigma(X_t^{\theta, x}, \theta) dW_t, \\
X_0^{\theta, x} &= x,
\eae
where $\theta \in \mathbb{R}^\ell$ and $W_t$ is a $d$-dimensional standard Brownian motion. Suppose $X_t^{\theta, x}$ is ergodic (to be concretely specified later in the paper) with the stationary distribution $\pi_\theta$. Our goal is to select the parameters $\theta$ which minimize the objective function 
\beq
\label{objective function}
J(\theta) =  \sum_{n=1}^N \bigg{(} \e_{Y \sim \pi_\theta} \big{[} f_n(Y) \big{]} - \beta_n \bigg{)}^2,
\eeq
where $f_n$ are known functions and $\beta_n$ are the target quantities.

Optimizing over the stationary distribution $\pi_{\theta}$ of the parameterized process (\ref{ergodic process}) is challenging. For stochastic differential equations (SDEs), the standard approach is to solve a forward Kolmogorov partial differential equation (PDE) and its adjoint PDE at each optimization iteration. At each iteration, a gradient descent step is taken. If the SDE is high-dimensional, this method is computationally expensive or even intractable due to the curse-of-dimensionality. An alternative ad hoc optimization method is to simulate a trajectory of the SDE for a long time interval $[0,T]$ at each optimization iteration and then calculate a gradient by chain rule. The SDE must be re-simulated from scratch at each iteration and the calculated gradient is an approximation (with error) since $T$ is finite. Consequently, the method is computationally expensive due to constant re-simulation and furthermore has error. See \cite{wang2022continuous} for a detailed description of existing methods for optimizing the class of models (\ref{ergodic process}). 

In \cite{wang2022continuous}, a new online algorithm was developed to optimize over the stationary distribution of SDEs such as (\ref{ergodic process}). The online algorithm simultaneously simulates (\ref{ergodic process}) while continuously updating the parameter $\theta_t$ using a stochastic estimate for the gradient $\nabla_{\theta} J(\theta_t)$. The stochastic estimate for the gradient $\nabla_{\theta} J(\theta_t)$ is based upon a \emph{forward propagation} SDE for the gradient of $X_t^{\theta,x}$ with respect to $\theta$. \cite{wang2022continuous} rigorously proves convergence of the online forward propagation algorithm for a class of linear SDEs. Numerical experiments demonstrate that the forward propagation algorithm also converges for nonlinear SDEs. \emph{In this new paper, we rigorously prove convergence of the forward propagation algorithm for a class of nonlinear SDEs.}

For notational convenience (and without loss of generality), we will set $N = 1$
and $\beta_1 = \beta$ in \eqref{objective function}. The online forward propagation algorithm for optimizing \eqref{objective function} is:\footnote{In this paper's notation, the Jacobian matrix of a vector value function $f: x \in \mathbb{R}^n \to \mathbb{R}^m $ is an $m \times n$ matrix.}
\bae
\label{nonlinear update}
\frac{d\theta_t}{dt} &= -2\alpha_t \left(f(\bar X_t) - \beta \right) \left(\nabla f(X_t)\tilde X_t\right)^{\top}, \\
d \tilde X_t &= \left( \mu_x( X_t, \theta_t )\tilde X_t +\mu_\theta(X_t, \theta_t) \right) dt + \left( \sigma_x(X_t,\theta_t)\tilde X_t + \sigma_\theta(X_t, \theta_t) \right) dW_t, \\
dX_t &= \mu(X_t, \theta_t) dt + \sigma(X_t, \theta_t) dW_t, \\
d\bar X_t &= \mu( \bar X_t, \theta_t ) dt + \sigma(\bar X_t, \theta_t) d \bar W_t,
\eae
where $W_t$ and $\bar W_t$ are independent Brownian motions and $\alpha_t$ is the learning rate. $\mu_x = \frac{\partial \mu}{\partial x}, \mu_{\theta} = \frac{\partial \mu}{\partial \theta}, \sigma_x = \frac{\partial \sigma}{\partial x},$ and $\sigma_{\theta} = \frac{\partial \sigma}{\partial \theta}$. The learning rate must be chosen such that $\int_0^{\infty} \alpha_s ds = \infty$ and $\int_0^{\infty} \alpha_s^2 ds < \infty$. (An example is $\alpha_t = \frac{C}{1 + t}$.) $\tilde X_t$ estimates the derivative of $X_t$ with respect to $\theta_t$. The parameter $\theta_t$ is continuously updated using $\left(f(\bar X_t) - \beta \right) \nabla f(X_t) \tilde X_t$ as a stochastic estimate for $\nabla_{\theta} J(\theta_t)$. 

To better understand the algorithm \eqref{nonlinear update}, let us re-write the gradient of the objective function using the ergodicity of $X_t^{\theta}$:
\begin{eqnarray}
\label{gradient}
\nabla_\theta J(\theta) &=&  2 \left( \e_{\pi_\theta} f(Y) - \beta \right) \nabla_\theta \e_{\pi_\theta} f(Y) \notag \\
&\overset{a.s.}=& 2\left( \lim_{T \to \infty}\frac{1}{T} \int_0^T f(X^{\theta}_t) dt- \beta \right) \times \nabla_\theta \left( \lim_{T \to \infty}\frac1T \int_0^T f(X^\theta_t)dt \right).
\end{eqnarray}
Define $\tilde X_t^{\theta} = \nabla_\theta X_t^\theta$, which is the solution of the following SDE:
\begin{eqnarray}
d \tilde X_t^{\theta} &= \left( \mu_x( X_t^{\theta}, \theta ) \tilde X_t^{\theta} +\mu_\theta(X_t^{\theta}, \theta) \right) dt + \left( \sigma_x(X_t^{\theta},\theta) \tilde X_t^{\theta} + \sigma_\theta(X_t^{\theta}, \theta) \right) dW_t.
\end{eqnarray}
$\tilde X_t$ and $\tilde X_t^{\theta}$ satisfy the same equations, except $\theta$ is a fixed constant for $\tilde X_t^{\theta}$ while $\theta_t$ is updated continuously in time for $\tilde X_t$. If the derivative and the limit in \eqref{gradient} can be interchanged, the gradient can be expressed as
\begin{eqnarray}
\nabla_\theta J(\theta)= 2\left(\lim_{T \to \infty}\frac{1}{T} \int_0^T f(X^{\theta}_t) dt - \beta \right) \times \lim_{T \to \infty} \frac1T \int_0^T \nabla f(X^\theta_t) \tilde X_t^\theta dt,
\end{eqnarray}
which suggests a natural stochastic estimate for $\nabla_\theta J(\theta_t)$. Specifically, the forward propagation algorithm (\ref{nonlinear update}) uses 
\beq
\label{SG estimation}
G(\theta_t) := 2 \left(f(\bar X_t) - \beta \right) \nabla f(X_t) \tilde X_t
\eeq
as a stochastic estimate for $\nabla_{\theta} J(\theta_t)$. It is expected that $G(\theta_t)$ asymptotically converges to an unbiased estimate for the direction of steepest descent $\nabla_{\theta} J(\theta_t)$. 

For large $t$, we expect that $\e\left[ f(\bar X_t) - \beta \right] \approx \e_{\pi_{\theta_t}}\left[ f(Y) - \beta \right]$ and $\e \left[\nabla f(X_t) \tilde X_t \right] \approx \nabla_\theta \left( \e_{\pi_{\theta_t}}\left[ f(X) - \beta \right] \right)$ since $\theta_t$ is changing very slowly as $t$ becomes large due to $\displaystyle \lim_{t \rightarrow \infty} \alpha_t = 0$. Furthermore, since $\bar X_t$ and $X_t$ are driven by independent Brownian motions, we expect that $\e \left[ 2 \left(f(\bar X_t) - \beta \right) \nabla f(X_t) \tilde X_t \right ] \approx \nabla_{\theta} J(\theta_t)$ for large $t$ due to $\bar X_t$ and $(X_t, \tilde X_t)$ being close to conditionally independent since $\theta_t$ will be changing very slowly for large $t$. Thus, we expect that the stochastic sample $G(\theta_t) = 2 \left(f(\bar X_t) - \beta \right) \nabla f(X_t) \tilde X_t$ will provide an asymptotically unbiased estimate for the direction of steepest descent $\nabla_{\theta} J(\theta_t)$ and $\theta_t$ will converge to a local minimum of the objective function $J(\theta)$. 

The evolution of the parameters $\theta_t$ in \eqref{nonlinear update} can be analyzed by decomposing the dynamics into a gradient descent term and fluctuation terms: 
\begin{eqnarray}
\label{gradient with error}
\frac{d\theta_t}{dt} &=&  -2\alpha_t ( f(\bar X_t) - \beta ) \left( \nabla f(X_t) \tilde X_t\right)^\top \notag \\
&=& -2\alpha_t (\e_{\pi_{\theta_t}}f(Y) - \beta) \left(\nabla f(X_t) \tilde X_t\right)^\top - 2\alpha_t \left( f(\bar X_t) - \e_{\pi_{\theta_t}}f(Y) \right)  \left(\nabla f(X_t) \tilde X_t\right)^\top \notag \\
&=&  \underbrace{-\alpha_t \nabla_\theta J(\theta_t)}_{\textrm{Direction of Steepest Descent}} - \underbrace{ 2\alpha_t (\e_{\pi_{\theta_t}}f(Y)-\beta) \left( \nabla f(X_t) \tilde X_t - \nabla_\theta \e_{Y \sim \pi_{\theta_t}}f(Y) \right)^\top }_{\textrm{Fluctuation term $1$}} \notag \\
&-& \underbrace{2\alpha_t \left( f(\bar X_t) - \e_{\pi_{\theta_t}}f(Y) \right)  \left(\nabla f(X_t)\tilde X_t\right)^\top }_{\textrm{Fluctuation term $2$}}.
\end{eqnarray}
We remark here that the transpose in \eqref{gradient with error} is due to $\nabla f$ being a row vector. In \cite{wang2022continuous}, convergence of the algorithm \eqref{nonlinear update} was proven for linear SDEs. 

The main goal of this paper is to rigorously prove the convergence of algorithm \eqref{nonlinear update} for a class of nonlinear SDEs \eqref{ergodic process}. The mathematical approach uses a Poisson partial differential equation (PDE), such as in \cite{pardoux2003poisson, pardoux2001poisson}, to rewrite the fluctuation terms in terms of the solution of the PDEs. The fluctuation terms can be appropriately bounded by proving bounds on the solution to the PDEs. We leverage recent methods from \cite{rockner2021strong} to characterize the convergence rate of the transition semi-group for \eqref{ergodic process} and its derivatives with respect to the initial condition $x$ and the parameter $\theta$, which combined with the moment stability for \eqref{ergodic process}, allow us to prove there exists appropriately bounded solutions to the PDE. Once the fluctuation terms have been bounded, using the moment stability for the coupled system \eqref{nonlinear update}, we can prove convergence of the forward propagation algorithm using the cycle of stopping times argument \cite{bertsekas2000gradient, sirignano2017stochastic, sirignano2021online}.

\subsection{Contributions of this Paper}

\hspace{1.4em} We rigorously prove the convergence of the algorithm (\ref{nonlinear update}) when $\mu, \sigma$ satisfy the standard dissipative condition and their derivatives are uniformly bounded. Unlike in the traditional stochastic gradient descent algorithm, \eqref{nonlinear update} is a fully coupled system and thus the data is not i.i.d. (i.e., $X_t$ is correlated with $X_s$ for $s \neq t$) and, for a finite time $t$, the stochastic update direction $G(\theta_t)$ is not an unbiased estimate of $\nabla_{\theta} J(\theta_t)$. Thus one needs to carefully analyze the fluctuations of the stochastic update direction $G(\theta_t)$ around $\nabla_{\theta} J(\theta_t)$ and prove the stochastic fluctuations vanish in an appropriate way as $t \rightarrow \infty$.

Bounds on the fluctuations are challenging to obtain due to the online nature of the algorithm. The stationary distribution $\pi_{\theta_t}$ will continuously change as the parameters $\theta_t$ evolve. Unlike in \cite{wang2022continuous}, which studies linear SDEs whose probability density can be characterized via a closed-form formula, in this paper we study nonlinear SDEs and thus more complex calculations are required. The dissipativity of the drift and diffusion terms in \eqref{ergodic process} and the uniform bounededness for their derivatives lead to an exponential convergence rate for the transition semi-group of \eqref{ergodic process} and its derivatives with respect to the initial point $x$ and the parameter $\theta$. A Poisson PDE is constructed for the infinitesimal generator of a certain SDE system: the original nonlinear SDE and an SDE for its derivative. We prove there exists a solution to this Poisson PDE and, furthermore, the solution satisfies a polynomial bound. Then, we are able to analyze the parameter fluctuations in the online forward algorithm using the solution to the Poisson PDE. The fluctuations are re-written in terms of the solution to the Poisson PDEs using Ito's formula, the bounds for the PDE solutions are subsequently applied, and finally we can show asymptotically that the fluctuations appropriately vanish. Our main theorem proves for nonlinear dissipative SDEs that:
\begin{eqnarray}
\lim_{t \rightarrow \infty} \left| \nabla_{\theta} J(\theta_t) \right| \overset{a.s.} = 0.
\end{eqnarray}

\subsection{Literature Review}

\hspace{1.4em} Recent articles such as \cite{jin2021continuous, sharrock2020two,  sirignano2017stochastic, sirignano2020stochastic, surace2018online, wang2021global, wang2022continuous} have studied continuous-time stochastic gradient descent. Our paper has several important differences as compared to \cite{jin2021continuous, sharrock2020two,  sirignano2017stochastic, sirignano2020stochastic, surace2018online}. These previous papers estimate the parameter $\theta$ for the SDE $X_t^{\theta}$ from observations of $X_t^{\theta^{\ast}}$ where $\theta^{\ast}$ is the true parameter. In this paper, our goal is to select $\theta$ such that the stationary distribution of $X_t^{\theta}$ matches certain target statistics. Therefore, unlike the previous papers, we are directly optimizing over the stationary distribution of $X_t^{\theta}$. Examples of this optimization problem can be found in ergodic stochastic control, bayesian statistics, and reinforcement learning    \cite{carmona2021convergence, casella2001empirical,  sutton2018reinforcement}.

The analysis in this paper is also related to the literature on multi-scale models and 
their averaging principle, which arises naturally in many applications in material sciences, chemistry, fluid dynamics, biology, ecology, and climate dynamics (see \cite{pavliotis2008multiscale, weinan2003multiscale, weinan2005analysis}). There exist two components in multi-scale models, where one evolves much faster that the other. Existing averaging results for multi-scale SDEs can be found in \cite{bezemek2020large, bezemek2022moderate, bezemek2022rate, cerrai2009averaging, li2022poisson, rockner2021strong, rockner2021diffusion}. 
In these articles, Poisson equation techniques play an important role in analyzing the fluctuations of the fast SDE around the more slowly changing SDE, which has some similarity to the fluctuations of our stochastic gradient estimate $G(\theta_t)$ in \eqref{SG estimation} around the true gradient $\nabla J(\theta_t)$. However, in this paper we study a completely new multi-scale system for a novel application: an online, stochastic algorithm for optimizing over the stationary distribution of an SDE.

The presence of the $X$ process in \eqref{nonlinear update} makes the mathematical analysis challenging as the $X$ term introduces correlation across times, and this correlation does not disappear as time tends to infinity. In order for the algorithm to converge, the fluctuation terms in \eqref{gradient with error} need to decay sufficiently rapidly; we will prove this using the exponential ergodicity of the transition semi-group of $X_t^{\theta, x}$ and its derivatives with respect to the initial $x$ and the parameter $\theta$. \cite{rockner2021strong, rockner2021diffusion} use the Poisson equation technique to study the strong convergence rate of the slow component in the multi-scale models to its averaged equation. However, the theoretical results from \cite{pardoux2003poisson, pardoux2001poisson, rockner2021strong, rockner2021diffusion, sirignano2017stochastic, sirignano2020stochastic} do not apply to the multi-scale SDE system nor the corresponding Poisson PDE considered in this paper since the diffusion term in our PDE is not uniformly elliptic. This is a direct result of the process $\tilde X_t$ in (\ref{nonlinear update}), which shares the same Brownian motion with the process $X_t$. Consequently, we must analyze a new class of Poisson PDEs as in \cite{wang2022continuous}. Under the dissipative condition for nonlinear SDEs, we prove there exists a solution to this new class of Poisson PDEs which satisfies a polynomial bound. The bound on the solution is crucial for analyzing the fluctuations of the parameter evolution in the algorithm (\ref{nonlinear update}). 

\subsection{Organization of Paper}

\hspace{1.4em} The paper is organized into three main sections.  In Section \ref{main result}, we present the assumptions and the main theorem. Section \ref{main proof} rigorously proves the convergence of the online forward propagation algorithm for nonlinear dissipative SDEs.

\section{Main Results}\label{main result}

\hspace{1.4em} 
We will study the convergence of the algorithm \eqref{nonlinear update} for a class of nonlinear SDEs satisfying the following conditions. 

\begin{assumption}\label{dissipative} 
(Condition on $\mu$ and $\sigma$) There exist constants $C, \beta>0$ such that the following conditions hold for all $x_1, x_2 \in \mathbb{R}^d,\ \theta_1, \theta_2 \in \mathbb{R}^\ell$:
\begin{itemize}
\item Lipschitz continuity:
\beq
\label{Lip}
\left| \mu(x_1, \theta_1) - \mu(x_2, \theta_2) \right| + \left| \sigma(x_1, \theta_1) - \sigma(x_2, \theta_2) \right| \le C \left( |x_1-x_2| + |\theta_1 - \theta_2| \right).
\eeq
\item Dissipativity:
\beq
\label{dis condition}
\langle \mu(x_1, \theta) - \mu(x_2, \theta),\ x_1 - x_2 \rangle + \frac72 \left| \sigma(x_1, \theta) - \sigma(x_2, \theta) \right|^2 \le -\beta|x_1-x_2|^2,
\eeq
where $\langle a, \ b \rangle := b^\top a$.
\item Uniformly bounded with respect to $\theta$ at $x = 0$:
\beq
\label{0 bound}
\sup_{\theta \in \mathbb{R}^\ell} \max\{ \left| \mu(0,\theta)\right|, \left|\sigma(0,\theta)\right| \} \le C. 
\eeq
\end{itemize}
\end{assumption}

\begin{assumption}\label{first derivative}
(Conditions on first-order partial derivatives) The first-order partial derivatives $\nabla_x \mu(x, \theta), \nabla_\theta \mu(x, \theta),$ $\nabla_x \sigma(x, \theta),$ and $\nabla_\theta \sigma(x, \theta)$ exist for any $(x, \theta) \in \mathbb{R}^d\times \mathbb{R}^\ell$. For any $x_1, x_2 \in \mathbb{R}^d$,
\begin{eqnarray}
\sup_{\theta \in \mathbb{R}^\ell} \left|\nabla_x \mu(x_1, \theta) - \nabla_x \mu(x_2, \theta) \right| \le C|x_1-x_2|, \\
\sup_{\theta \in \mathbb{R}^\ell} \left|\nabla_\theta \mu(x_1, \theta) - \nabla_\theta \mu(x_2, \theta) \right| \le C|x_1-x_2|, \\
\sup_{\theta \in \mathbb{R}^\ell} \left|\nabla_x \sigma(x_1, \theta) - \nabla_x \sigma(x_2, \theta) \right| \le C|x_1-x_2|, \\
\sup_{\theta \in \mathbb{R}^\ell} \left|\nabla_\theta \sigma(x_1, \theta) - \nabla_\theta \sigma(x_2, \theta) \right| \le C|x_1-x_2|.
\end{eqnarray}	
\end{assumption}

\begin{assumption}\label{high derivative}
(Conditions on higher-order partial derivatives) The second-order partial derivatives $\nabla^2_{x} \mu(x, \theta), \nabla^2_{\theta} \mu(x, \theta)$, $\nabla_x\nabla_{\theta} \mu(x, \theta),$ and  $\nabla_x^2 \nabla_{\theta} \mu(x, \theta)$ exist for any $(x, \theta) \in \mathbb{R}^d \times \mathbb{R}^\ell$. For any $x_1, x_2 \in \mathbb{R}^d$,
\begin{eqnarray}
\sup_{\theta \in \mathbb{R}^\ell} \left|\nabla^2_{x} \mu(x_1, \theta) - \nabla^2_{x} \mu(x_2, \theta) \right| \le C|x_1-x_2|, \label{high 1}\\
\sup_{\theta \in \mathbb{R}^\ell} \left|\partial^2_{\theta} \mu(x_1, \theta) - \nabla^2_{\theta} \mu(x_2, \theta) \right| \le C|x_1-x_2|, \label{high 2}\\
\sup_{\theta \in \mathbb{R}^\ell} \left|\nabla_{x}\nabla_\theta \mu(x_1, \theta) - \nabla_{x}\nabla_\theta \mu(x_2, \theta) \right| \le C|x_1-x_2|, \label{high 3}\\
\sup_{\theta \in \mathbb{R}^\ell} \left|\nabla_x^2 \nabla_{\theta} \mu(x_1, \theta) - \nabla_x^2 \nabla_{\theta} \mu(x_2, \theta) \right| \le C|x_1-x_2|. \label{high 4}
\end{eqnarray}
Furthermore, if $\mu$ is replaced by $\sigma$, the properties \eqref{high 1} - \eqref{high 4} also hold, and 
\bae
\sup_{(x, \theta) \in \mathbb{R}^d \times \mathbb{R}^\ell} \max\left\{ \left| \nabla_x^2 \mu(x, \theta) \right|,\ \left| \nabla^2_{\theta} \mu(x, \theta) \right|,\ \left|\nabla_x \nabla_{\theta} \mu(x, \theta) \right|,\ \left|\nabla_x^2 \nabla_{\theta} \mu(x, \theta) \right| \right\} &\le C, \\
\sup_{(x, \theta) \in \mathbb{R}^d \times \mathbb{R}^\ell} \max\left\{ \left| \nabla_x^2 \sigma(x, \theta) \right|,\ \left| \nabla^2_{\theta} \sigma(x, \theta) \right|,\ \left|\nabla_x \nabla_{\theta} \sigma(x, \theta) \right|,\ \left| \nabla_x^2 \nabla_{\theta} \sigma(x, \theta) \right| \right\} &\le C,
\eae
\end{assumption}

\begin{assumption}\label{f}
	The function $f$ in the objective function is continuously differentiable and has  uniformly bounded derivatives, i.e. there exists a constant $C$ such that
	\beq
	\label{gradient f bound}
	\left| \nabla^i f(x) \right| \le C, \quad \forall x\in \mathbb{R}^d,\  i= 1,2,3.
	\eeq
\end{assumption}

\begin{assumption}\label{lr}
	The learning rate $\alpha_t$ satisfies $\int_{0}^{\infty} \alpha_{t} d t=\infty$, $\int_{0}^{\infty} \alpha_{t}^{2} d t<\infty$, $\int_{0}^{\infty}\left|\alpha_{s}^{\prime}\right| d s<\infty$, and there is a $p>0$ such that $\displaystyle \lim _{t \rightarrow \infty} \alpha_{t}^{2} t^{\frac12 + 2 p}=0$.
\end{assumption}

\begin{remark} 
Our assumptions are standard in the mathematical literature which studies ergodic SDEs. We briefly comment on these assumptions before presenting our main theoretical result.
\begin{itemize}
    \item A sufficient condition for the dissipative SDE \eqref{ergodic process} to satisfy Assumptions A\ref{dissipative}-A\ref{high derivative} is that the first-, second-, and third-order derivatives of $\mu, \sigma$ are uniformly bounded and for any $x, y \in \mathbb{R}^d, \theta \in \mathbb{R}^\ell$
    \bae
    \label{dissipative 2}
    y^\top \nabla_x \mu(x, \theta) y \le -C|y|^2,\quad |\sigma(x,\theta)-\sigma(y,\theta)| \le L|x-y|,
    \eae
    where $C, L>0$ are constants and $\frac32 L^2<C$. A classic example is the Langevin Equation, where the drift term is the gradient of some convex potential. That is $\mu(x, \theta) = -\nabla V(x, \theta)$ with $V(x, \theta)$ being convex with respect to $x$. See  \cite{pavliotis2014stochastic, pavliotis2007parameter} for a detailed discussion.
    \item  Conditions \eqref{Lip} and \eqref{0 bound} imply that there exists a constant $C>0$ such that, for any $x \in \mathbb{R}^{d}$ and $\theta \in \mathbb{R}^{\ell}$, 
    \beq
    |\mu(x, \theta)| + |\sigma(x, \theta)| \le C \left(1+|x|\right).
    \eeq
    \item Conditions \eqref{dis condition} and \eqref{0 bound} imply that there exists a constant $C>0$ such that, for any $x \in \mathbb{R}^{d}$ and $\theta \in \mathbb{R}^{\ell}$, 
    \beq
    \label{4 moment dis}
    2 \langle \mu(x, \theta),\ x\rangle + 7|\sigma(x, \theta)|^{2} \le -\beta |x|^{2} + C.
    \eeq
    The derivation of the above inequality is:
    \bae
    \label{trick}
    &2 \langle \mu(x, \theta),\ x\rangle + 7|\sigma(x, \theta)|^{2} \\
    =& 2 \langle \mu(x, \theta) - \mu(0, \theta),\ x\rangle + 2\langle \mu(0, \theta),\ x\rangle + 7|\sigma(x, \theta)-\sigma(0, \theta) + \sigma(0, \theta)|^{2} \\
    \overset{(a)}{\le}& -2\beta  |x|^2 + 2\langle \mu(0, \theta),\ x\rangle + 7 |\sigma(0, \theta)|^2 + 14|\sigma(x,\theta)- \sigma(0, \theta)| \cdot | \sigma(0,\theta)|\\
    \overset{(b)}{\le}& -\beta|x|^2 + C \left( |\mu(0,\theta)|^2 + |\sigma(0,\theta)|^2 \right),
    \eae
    where step $(a)$ uses the inequality \eqref{dis condition} and step $(b)$ use Young's inequality and the inequality \eqref{Lip}.
    
    Condition \eqref{dis condition} is used to prove the solution of dynamic \eqref{nonlinear update} has uniformly bounded fourth moment.
    \item Assumption (A\ref{dissipative}) guarantees (see Theorem 4.3.9 of \cite{prevot2007concise}) that there exists a unique invariant measure $\pi_\theta$ for \eqref{ergodic process} such that  
    \beq
    \label{stationary moment}
    \int_{\mathbb{R}^d} |x| \pi_\theta(dx) \le C<\infty.
    \eeq
    \end{itemize}
\end{remark}

Under these assumptions, we are able to prove the convergence of the online forward algorithm \eqref{nonlinear update}.

\begin{comment}
\begin{theorem}
Suppose that conditions \eqref{Lip}, \eqref{0 bound}, \eqref{gradient f bound} hold and the first order derivatives of $\mu, \sigma$ exist. Then for any given initial value $\theta_0 \in \mathbb{R}^{\ell}, \tilde x_0 \in \mathbb{R}^{d \times \ell}, x_0 \in \mathbb{R}^{d}, \bar x_0 \in \mathbb{R}^{d}$, there exists a unique solution $\left\{\theta_t, \tilde X_t, X_t, \bar X_t, t \geq 0\right\}$ to system \eqref{nonlinear update} and for all $T>0,\left(\theta_t, \tilde X_t, X_t, \bar X_t \right) \in C\left([0, T] ; \mathbb{R}^{\ell}\right) \times C\left([0, T] ; \mathbb{R}^{d \times \ell}\right) \times C\left([0, T] ; \mathbb{R}^{d}\right) \times C\left([0, T] ; \mathbb{R}^{d}\right) $, $\mathbb{P}$-a.s. and
\bae
\theta_t &=  \theta_0 - 2 \int_0^t \alpha_s \left(f(\bar X_s) - \beta \right) \left(\nabla f(X_s)\tilde X_s\right)^{\top}, \\
\tilde X_t &= \tilde x_0 + \int_0^t \left( \mu_x( X_s, \theta_s )\tilde X_s  + \mu_\theta(X_s, \theta_s) \right) ds + \int_0^t \left( \sigma_x(X_s,\theta_s)\tilde X_s + \sigma_\theta(X_s, \theta_s) \right) dW_s, \\
X_t &= x_0 + \int_0^t \mu(X_s, \theta_s) dt + \int_0^t \sigma(X_s, \theta_s) dW_s, \\
\bar X_t &= \bar x_0 + \int_0^t \mu( \bar X_s, \theta_s) ds + \int_0^t \sigma(\bar X_s, \theta_s) d \bar W_s,
\eae
\end{theorem}
\end{comment}

\begin{theorem}
	\label{conv}
	Under Assumptions (A\ref{dissipative}) - (A\ref{lr}) and for the SDE system \eqref{nonlinear update}, we have
	\beq
	\lim_{t \rightarrow \infty} \left| \nabla_\theta J(\theta_t) \right|  \overset{a.s.} =  0. 
	\eeq
\end{theorem}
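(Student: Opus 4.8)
The plan is to follow the classical stochastic approximation strategy adapted to the fully coupled, non-i.i.d.\ setting, organized around the decomposition \eqref{gradient with error} of $d\theta_t/dt$ into a steepest-descent term and two fluctuation terms. The first task is to establish \emph{uniform moment bounds}: using the dissipativity \eqref{dis condition} (and its consequence \eqref{4 moment dis}) together with the linear growth of $\mu,\sigma$, I would show $\sup_{t \ge 0}\e|X_t|^4 < \infty$ and $\sup_{t\ge 0}\e|\bar X_t|^4 < \infty$, and then, since $\tilde X_t$ solves a linear SDE whose coefficients are controlled by Assumption (A\ref{first derivative}) (bounded $\mu_x,\sigma_x$ and at-most-linear-growth $\mu_\theta,\sigma_\theta$), propagate this to $\sup_{t\ge 0}\e|\tilde X_t|^4 < \infty$. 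These bounds make $G(\theta_t)$ and the fluctuation terms integrable and are what the dissipativity hypothesis is there to deliver. I would also record the a priori bound $J(\theta_t)\ge 0$ and the fact that $|\nabla_\theta J(\theta_t)|$ is controlled by $\e_{\pi_{\theta_t}}|Y|$, which is uniformly bounded by \eqref{stationary moment}.

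The second and technically central task is the \emph{Poisson PDE construction}. Consider the time-frozen generator $\mathcal L^\theta$ of the joint process $(X^\theta_t,\tilde X^\theta_t)$ and, for each fixed $\theta$, the functions $\chi_1(x,\tilde x,\theta)$ solving $\mathcal L^\theta \chi_1 = \nabla f(x)\tilde x - \nabla_\theta \e_{\pi_\theta}f(Y)$ and $\chi_2(\bar x,\theta)$ solving the analogous Poisson equation on the $\bar X$-marginal with right-hand side $f(\bar x) - \e_{\pi_\theta}f(Y)$. Here I would invoke the exponential ergodicity of the transition semigroup and its $x$- and $\theta$-derivatives — which the paper obtains from the dissipative condition and the uniform derivative bounds in (A\ref{first derivative})--(A\ref{high derivative}) via the methods of \cite{rockner2021strong} — to write $\chi_i$ as a time integral of centered semigroup quantities and to prove the solutions and their needed derivatives satisfy polynomial bounds in $(x,\tilde x)$. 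The key subtlety, flagged in the introduction, is that $\tilde X_t$ shares its Brownian motion with $X_t$, so the joint generator is \emph{not} uniformly elliptic and standard Poisson-PDE regularity theory does not apply; the argument must instead rely purely on the probabilistic representation plus the semigroup decay estimates, exactly as in \cite{wang2022continuous}. This is the step I expect to be the main obstacle.

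The third task is to \emph{re-express the fluctuation terms using Itô's formula} applied to $\alpha_t \chi_i(\cdot,\theta_t)$ along the coupled dynamics \eqref{nonlinear update}. Differentiating in $t$ produces (i) the Poisson right-hand side times $\alpha_t$, which is precisely Fluctuation term $1$ (resp.\ term $2$ after pairing with the scalar prefactors), (ii) a boundary/endpoint term of size $O(\alpha_t)\cdot(\text{polynomial in state})$, (iii) a term $\alpha_t \partial_\theta \chi_i \cdot \dot\theta_t = O(\alpha_t^2)\cdot(\text{polynomial})$ coming from the drift of $\theta_t$, (iv) a term $\alpha_t' \chi_i$ controlled by $\int_0^\infty|\alpha_s'|\,ds < \infty$, and (v) a martingale term whose quadratic variation is $O(\alpha_t^2)\cdot(\text{polynomial})$. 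Taking expectations, using the uniform moment bounds to turn ``polynomial in state'' into constants, and using $\int_0^\infty \alpha_s^2\,ds<\infty$ plus $\alpha_t\to 0$, I would conclude that the time-integrals $\int_0^t (\text{Fluctuation}_j)\,ds$ converge a.s.\ (the martingale part via the $L^2$-bounded-martingale convergence theorem, after also handling that the discrepancy between $\chi_i(\cdot,\theta_s)$ evaluated along the true SDE versus along the frozen SDE is itself $o(1)$ because $\theta$ moves slowly).

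Finally I would run the \emph{cycle-of-stopping-times argument} of \cite{bertsekas2000gradient, sirignano2017stochastic, sirignano2021online}. Writing $\theta_t = \theta_0 - \int_0^t \alpha_s \nabla_\theta J(\theta_s)\,ds + R_t$ with $R_t$ the accumulated fluctuations (now known to converge), one argues by contradiction: if $\limsup_t |\nabla_\theta J(\theta_t)| > 0$ on a positive-probability event, then since $\liminf_t|\nabla_\theta J(\theta_t)|$ must be $0$ (else $J(\theta_t)\to-\infty$, contradicting $J\ge 0$, using that $J$ decreases to leading order along the flow because $\int_0^\infty\alpha_s\,ds=\infty$), the trajectory $\nabla_\theta J(\theta_t)$ oscillates infinitely often between two levels; on each such excursion the descent term forces a definite decrease in $J$ while the fluctuation increment $R_{t_2}-R_{t_1}\to 0$, so $J(\theta_t)$ would decrease without bound — a contradiction. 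This requires local Lipschitz continuity of $\nabla_\theta J$ and of $\theta\mapsto\theta_t$-increments over short intervals, both of which follow from Assumptions (A\ref{first derivative})--(A\ref{f}) and the moment bounds. Combining the cases yields $\lim_{t\to\infty}|\nabla_\theta J(\theta_t)| = 0$ a.s.
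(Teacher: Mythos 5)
Your overall architecture (decomposition \eqref{gradient with error}, Poisson equations for the fluctuations, It\^o rewriting with $\alpha_t$-weighted correctors, and the cycle-of-stopping-times contradiction) is the same as the paper's, and your treatment of Fluctuation term $1$ is essentially Lemma \ref{poisson eq} with the $\theta$-only prefactor $(\e_{\pi_\theta}f(Y)-\beta)$ folded in, which is harmless. The genuine gap is in Fluctuation term $2$. You propose a corrector $\chi_2(\bar x,\theta)$ solving the Poisson equation on the $\bar X$-marginal with right-hand side $f(\bar x)-\e_{\pi_\theta}f(Y)$ and then ``pairing with the scalar prefactors.'' But the prefactor here is $\nabla f(X_t)\tilde X_t$, which is not a function of $\theta_t$ alone and is not slowly varying: it is itself a diffusion driven by $W$. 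If you apply It\^o's formula to $\alpha_t\,\chi_2(\bar X_t,\theta_t)\,(\nabla f(X_t)\tilde X_t)^\top$, the product rule produces, besides the desired term $\alpha_t(\mathcal{L}_{\bar x}\chi_2)(\nabla f(X_t)\tilde X_t)^\top$, an order-$\alpha_t$ drift contribution $\alpha_t\,\chi_2(\bar X_t,\theta_t)\,\mathcal{L}^{\theta_t}_{x,\tilde x}\bigl(\nabla f(x)\tilde x\bigr)\big|_{(X_t,\tilde X_t)}$ (equivalently, $\mathcal{L}^\theta_{x,\tilde x,\bar x}[\chi_2 h]=(\mathcal{L}_{\bar x}\chi_2)h+\chi_2\,\mathcal{L}_{x,\tilde x}h$ since the cross variation vanishes), plus an extra martingale from $d(\nabla f(X_t)\tilde X_t)$. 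The leftover drift term is $O(\alpha_t)$ times a polynomial in the state, so its integral over $[\tau_n,\sigma_n+\eta]$ is of order $\mu$ and does not vanish as $n\to\infty$; your scheme as written therefore does not close. The paper avoids this by posing the Poisson equation for the full product $G^2(x,\tilde x,\bar x,\theta)=[f(\bar x)-\e_{\pi_\theta}f(Y)](\nabla f(x)\tilde x)^\top$ with respect to the generator of the joint triple $(X,\tilde X,\bar X)$ (Lemma \ref{poisson eq 2}); independence of $\bar X$ from $(X,\tilde X)$ gives the explicit representation $v^2=\int_0^\infty(\e_{\pi_\theta}f(Y)-\e f(\bar X_t^{\theta,\bar x}))\,\e[\nabla f(X_t^{\theta,x})\tilde X_t^{\theta,x,\tilde x}]^\top dt$, which is not of the product form $\chi_2\cdot(\nabla f(x)\tilde x)^\top$, and the bound $C(1+|\bar x|)(1+|\tilde x|)$ then feeds into the same It\^o argument as for term $1$.

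A secondary, repairable gap: for the endpoint terms $\alpha_{\tau_n}v(X_{\tau_n},\tilde X_{\tau_n},\theta_{\tau_n})$ and $\alpha_{\sigma_n+\eta}v(\cdot)$ you argue ``take expectations, use uniform moment bounds, and $\alpha_t\to 0$,'' but expectation bounds plus $\alpha_t\to0$ do not give almost-sure vanishing along the random times $\tau_n,\sigma_n+\eta\to\infty$, since the state does not converge and $v$ grows linearly (for $v^2$, quadratically in the pair $(\bar x,\tilde x)$). This is exactly why the paper proves $\e\sup_{0\le s\le t}|X_s|^4$, $\e\sup_{0\le s\le t}|\tilde X_s|^4=O(\sqrt t)$ in Lemma \ref{moment}, imposes $\lim_{t\to\infty}\alpha_t^2 t^{\frac12+2p}=0$ in Assumption A\ref{lr}, and runs a Borel--Cantelli argument along dyadic times to get $J^{i,1}_t\to0$ a.s. Your sketch never invokes this part of Assumption A\ref{lr}, so as stated the almost-sure control of the boundary terms is missing; with the $O(\sqrt t)$ running-supremum bound and the Borel--Cantelli step added, that piece matches the paper's proof.
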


\section{Proof}\label{main proof}

\hspace{1.4em} The SDE system \eqref{nonlinear update} has a unique strong solution.\footnote{Existence and uniqueness can be proven using the standard method of a contraction map; see Theorem 1.2 of \cite{carmona2016lectures} for details.} In equation \eqref{gradient with error}, we decomposed the evolution of $\theta_t$ into the direction of steepest descent $-\alpha_t \nabla_\theta J(\theta_t)$ and two fluctuation terms. Define the fluctuation terms as 
\bae
\label{error}
Z_t^1 &= (\e_{\pi_{\theta_t}}f(Y)-\beta) \left( \nabla f(X_t) \tilde X_t -  \nabla_\theta \e_{\pi_{\theta_t}}f(Y) \right)^\top, \\
Z_t^2 &= \left( f(\bar X_t) - \e_{\pi_{\theta_t}}f(Y)) \right) \left( \nabla f(X_t) \tilde X_t\right)^\top.
\eae
As in \cite{sirignano2017stochastic}, we will study a cycle of stopping times to control the time periods where $|\nabla_\theta J(\theta_t)|$ is close to zero and away from zero. Let us select an arbitrary constant $\kappa>0$ and also define $\mu=\mu(\kappa)>0$ (to be chosen later). Then, set $\sigma_{0}=0$ and define the cycles of random times
$$
0=\sigma_{0} \leq \tau_{1} \leq \sigma_{1} \leq \tau_{2} \leq \sigma_{2} \leq \ldots,
$$
where the stopping times are defined as
\bae
	\label{cycle of time}
	&\tau_{n}=\inf \left\{t>\sigma_{n-1}:\left|\nabla_\theta  J\left(\theta_{t}\right)\right| \geq \kappa\right\} \\
	&\sigma_{n}=\sup \left\{t>\tau_{n}: \frac{\left|\nabla_\theta J\left(\theta_{\tau_{n}}\right)\right|}{2} \leq\left|\nabla _\theta J\left(\theta_{s}\right)\right| \leq 2\left|\nabla_\theta J\left(\theta_{\tau_{n}}\right)\right| \text { for all } s \in\left[\tau_{n}, t\right] \text { and } \int_{\tau_{n}}^{t} \alpha_{s} d s \leq \mu \right\}.
\eae 
We define the random time intervals $J_{n}=\left[\sigma_{n-1}, \tau_{n}\right)$ and $I_{n}=\left[\tau_{n}, \sigma_{n}\right)$. We introduce the constant $\eta > 0$ which will be chosen to be sufficiently small later. In order to prove convergence, we will have to show that the fluctuation terms become small as $t \rightarrow \infty$. In particular, the following integral of the fluctuation term will be crucial to the convergence analysis:
\beq
\label{error integral}
\Delta^i_{\tau_n,\sigma_n + \eta} := \int_{\tau_n}^{\sigma_n + \eta} \alpha_s Z^i_s ds, \quad i =1,2.
\eeq

We will begin our analysis by first presenting several lemmas regarding Lipschitz continuity, moment bounds, and ergodicity. The proofs are the same as in \cite{rockner2021strong} and thus we omit them.

\begin{lemma}[Lipschitz continuity]
\label{difference lemma}
For any $t>0$, $x_i\in \mathbb{R}^d$, and $\theta_i \in R^{\ell}$, we have
\beq
\e\left| X_t^{\theta_1, x_1} - X_t^{\theta_2, x_2}  \right|^2 \le e^{-\beta t} |x_1 - x_2|^2 + C|\theta_1 - \theta_2|^2.
\eeq
\end{lemma}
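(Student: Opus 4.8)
The plan is to apply Itô's formula to the squared difference and reduce to a scalar linear differential inequality that Gronwall's lemma closes. Set $Y_t := X_t^{\theta_1,x_1} - X_t^{\theta_2,x_2}$, so that $Y_0 = x_1 - x_2$ and
\[
dY_t = \big(\mu(X_t^{\theta_1,x_1},\theta_1) - \mu(X_t^{\theta_2,x_2},\theta_2)\big)\,dt + \big(\sigma(X_t^{\theta_1,x_1},\theta_1) - \sigma(X_t^{\theta_2,x_2},\theta_2)\big)\,dW_t .
\]
Itô's formula gives $d|Y_t|^2 = \big(2\langle \mu(X_t^{\theta_1,x_1},\theta_1) - \mu(X_t^{\theta_2,x_2},\theta_2),\, Y_t\rangle + |\sigma(X_t^{\theta_1,x_1},\theta_1) - \sigma(X_t^{\theta_2,x_2},\theta_2)|^2\big)\,dt + dM_t$ with $M_t$ a local martingale. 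After a standard localization argument, using the linear growth of $\mu,\sigma$ (from \eqref{Lip} and \eqref{0 bound}) together with the uniform-in-time moment bounds for \eqref{ergodic process} implied by \eqref{4 moment dis}, $M_t$ is a genuine martingale, so taking expectations and differentiating in $t$ yields
\[
\frac{d}{dt}\,\e|Y_t|^2 = \e\Big[2\langle \mu(X_t^{\theta_1,x_1},\theta_1) - \mu(X_t^{\theta_2,x_2},\theta_2),\, Y_t\rangle + \big|\sigma(X_t^{\theta_1,x_1},\theta_1) - \sigma(X_t^{\theta_2,x_2},\theta_2)\big|^2\Big].
\]

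Next I would isolate the parameter dependence by inserting $\mu(X_t^{\theta_2,x_2},\theta_1)$ and $\sigma(X_t^{\theta_2,x_2},\theta_1)$. For the piece with $\theta_1$ frozen, the dissipativity bound \eqref{dis condition} gives
\[
2\langle \mu(X_t^{\theta_1,x_1},\theta_1) - \mu(X_t^{\theta_2,x_2},\theta_1),\, Y_t\rangle \le -2\beta|Y_t|^2 - 7\big|\sigma(X_t^{\theta_1,x_1},\theta_1) - \sigma(X_t^{\theta_2,x_2},\theta_1)\big|^2 .
\]
For the diffusion term, apply $|a+b|^2 \le 7|a|^2 + \tfrac76|b|^2$ with $a = \sigma(X_t^{\theta_1,x_1},\theta_1) - \sigma(X_t^{\theta_2,x_2},\theta_1)$ and $b = \sigma(X_t^{\theta_2,x_2},\theta_1) - \sigma(X_t^{\theta_2,x_2},\theta_2)$; the resulting $7|a|^2$ cancels exactly against the negative diffusion contribution above, while $|b|\le C|\theta_1-\theta_2|$ by \eqref{Lip} leaves $\tfrac76 C^2|\theta_1-\theta_2|^2$. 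The leftover drift cross term is bounded pathwise by $2C|\theta_1-\theta_2|\,|Y_t| \le \beta|Y_t|^2 + \tfrac{C^2}{\beta}|\theta_1-\theta_2|^2$ using \eqref{Lip} and Young's inequality. Collecting everything,
\[
\frac{d}{dt}\,\e|Y_t|^2 \le -\beta\,\e|Y_t|^2 + C'|\theta_1-\theta_2|^2,\qquad C' := \Big(\tfrac1\beta + \tfrac76\Big)C^2 .
\]

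Finally, multiplying by $e^{\beta t}$ and integrating yields $\e|Y_t|^2 \le e^{-\beta t}|x_1-x_2|^2 + \tfrac{C'}{\beta}(1 - e^{-\beta t})|\theta_1-\theta_2|^2$, which is stronger than the claimed bound, with $C = C'/\beta$. I do not expect a serious obstacle: the only delicate point is the bookkeeping of constants in the diffusion splitting — it is precisely the coefficient $\tfrac72$ in \eqref{dis condition} that leaves just enough slack (a factor $7$ after doubling) to reabsorb the $\theta_1$-frozen diffusion difference after the decomposition — together with the routine justification that $M_t$ is a true martingale after localization, which follows from the a priori moment estimates for \eqref{ergodic process}.
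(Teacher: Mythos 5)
Your proposal is correct and follows essentially the same route as the paper's argument (which is taken from Lemma 3.6 of \cite{rockner2021strong}): Itô's formula for $\e|Y_t|^2$, insertion of the intermediate terms $\mu(X_t^{\theta_2,x_2},\theta_1)$ and $\sigma(X_t^{\theta_2,x_2},\theta_1)$, dissipativity \eqref{dis condition} plus Lipschitz continuity \eqref{Lip} and Young's inequality to reach $\frac{d}{dt}\e|Y_t|^2 \le -\beta\,\e|Y_t|^2 + C|\theta_1-\theta_2|^2$, and an integrating factor to conclude. The only cosmetic difference is the weighting in the diffusion split (you use $|a+b|^2 \le 7|a|^2 + \tfrac76|b|^2$, the paper uses the cruder $2|a|^2+2|b|^2$, both of which fit within the slack provided by the coefficient $\tfrac72$ in \eqref{dis condition}), so the two proofs are materially identical.
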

A proof can be found in Lemma $3.6$ of \cite{rockner2021strong}.

\begin{lemma}[Ergodicity]
\label{ergodic}
For any $t\ge 0$, $x \in \mathbb{R}^d$, and $\theta \in \mathbb{R}^\ell$,
\beq
\left| \e f(X_t^{\theta,x}) - \e_{\pi_\theta} f(Y) \right| \le C e^{-\frac{\beta t}{2}} (1+|x|).
\eeq
\end{lemma}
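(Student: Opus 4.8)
The plan is to combine three ingredients already available: the global Lipschitz property of $f$ coming from Assumption (A\ref{f}), the synchronous-coupling contraction estimate of Lemma~\ref{difference lemma}, and the uniform-in-$\theta$ first moment bound \eqref{stationary moment} on the invariant measure $\pi_\theta$. Since $\sup_x|\nabla f(x)|\le C$, the function $f$ is globally Lipschitz with constant $C$, so it suffices to control $\e|X_t^{\theta,x}-Y|$ for a suitable coupling of $X_t^{\theta,x}$ with a $\pi_\theta$-distributed random variable $Y$.

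First I would represent the equilibrium average through a stationary copy of the process: let $Y_0\sim\pi_\theta$ be drawn independently of the Brownian motion $W$ that drives $X^{\theta,x}$, and let $X_t^{\theta,Y_0}$ solve \eqref{ergodic process} with the \emph{same} $W$ and initial condition $Y_0$. By invariance of $\pi_\theta$ under the semigroup, $X_t^{\theta,Y_0}\sim\pi_\theta$ for every $t\ge 0$, hence $\e_{\pi_\theta}f(Y)=\e f(X_t^{\theta,Y_0})$. Then
\[
\left|\e f(X_t^{\theta,x})-\e_{\pi_\theta}f(Y)\right|\le \e\left|f(X_t^{\theta,x})-f(X_t^{\theta,Y_0})\right|\le C\,\e\left|X_t^{\theta,x}-X_t^{\theta,Y_0}\right|.
\]
Conditioning on $Y_0$ and using that $Y_0$ is independent of $W$, Lemma~\ref{difference lemma} applied with $\theta_1=\theta_2=\theta$ (so the $\theta$-term drops out) gives $\e\big[|X_t^{\theta,x}-X_t^{\theta,Y_0}|^2\mid Y_0\big]\le e^{-\beta t}|x-Y_0|^2$. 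Hence, by conditional Jensen,
\[
\e\left|X_t^{\theta,x}-X_t^{\theta,Y_0}\right|\le \e\!\left[\left(\e\big[|X_t^{\theta,x}-X_t^{\theta,Y_0}|^2\mid Y_0\big]\right)^{1/2}\right]\le e^{-\beta t/2}\,\e|x-Y_0|\le e^{-\beta t/2}\big(|x|+C\big),
\]
where the last inequality uses $\int_{\mathbb{R}^d}|y|\,\pi_\theta(dy)\le C$ from \eqref{stationary moment}. Combining the two displays yields $\left|\e f(X_t^{\theta,x})-\e_{\pi_\theta}f(Y)\right|\le Ce^{-\beta t/2}(1+|x|)$, as claimed.

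This argument has no serious obstacle; the only points requiring care are (i) well-posedness of the coupling — one needs $Y_0$ independent of the noise and the same strong solution map applied to both initial conditions, so that Lemma~\ref{difference lemma} can be invoked path by path conditionally on $Y_0$, which is legitimate given strong existence and uniqueness for \eqref{ergodic process}; and (ii) the uniformity in $\theta$, which is inherited entirely from the uniform constant in \eqref{stationary moment}. An alternative, essentially equivalent route is to write $\e f(X_t^{\theta,x})-\e_{\pi_\theta}f(Y)=\int(\e f(X_t^{\theta,x})-\e f(X_t^{\theta,y}))\,\pi_\theta(dy)$ and bound each integrand directly by Lemma~\ref{difference lemma}; I would present the coupling version since it makes the use of invariance transparent.
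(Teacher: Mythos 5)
Your proof is correct and is essentially the paper's argument: the paper (following Proposition 3.7 of \cite{rockner2021strong}) uses exactly your ``alternative route,'' writing $\e_{\pi_\theta}f(Y)=\int \e f(X_t^{\theta,x'})\,\pi_\theta(dx')$ by invariance and bounding the integrand via the Lipschitz property of $f$, Lemma~\ref{difference lemma} with Cauchy--Schwarz, and the moment bound \eqref{stationary moment}. Your synchronous-coupling presentation is just a repackaging of the same three ingredients, so no substantive difference.
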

A proof can be found in Proposition $3.7$ of \cite{rockner2021strong}.

\begin{comment}
\begin{proof}
By the definition of the invariant measure $\pi_\theta$, we have that
\bae
\left| \e f(X_t^{\theta,x}) - \e_{\pi_\theta} f(Y) \right| &= \left| \e f(X_t^{\theta, x}) - \int_{\mathbb{R}^d} f(x') \pi_\theta(dx') \right| \\
&= \left| \int_{\mathbb{R}^d} \left[ \e f(X_t^{\theta, x}) - \e f(X_t^{\theta, x'}) \right] \pi_\theta(dx') \right| \\
&\overset{(a)}{\le} C\int_{\mathbb{R}^d} \e \left| X_t^{\theta, x} - X_t^{\theta, x'} \right| \pi_\theta(dx')\\
&\overset{(b)}{\le} C e^{-\frac\beta2 t} \int_{\mathbb{R}^d} \left| x - x' \right| \pi_\theta(dx')\\
&\overset{(c)}{\le} C e^{-\frac{\beta t}{2}} (1+|x|),
\eae
where step $(a)$ is due to the Lipschitz continuity of $f$, step $(b)$ uses the Cauchy-Schwarz inequality and Lemma \ref{difference lemma}, and step $(c)$ is due to the moment bound \eqref{stationary moment}.
\end{proof}
\end{comment}

\begin{lemma}[Moment Bound]
\label{moment stable} 
There exists a constant $C$ such that
\beq
\e \left|X_t^{\theta, x}\right|^2 \le C(1+e^{-\beta t}|x|^2), \quad \forall x\in\mathbb{R}^d, t\ge 0.
\eeq
\end{lemma}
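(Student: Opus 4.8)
The plan is to apply It\^o's formula to the scalar process $|X_t^{\theta,x}|^2$ and then exploit the dissipativity estimate \eqref{4 moment dis}, which was derived precisely for this purpose. By It\^o's formula,
\beq
d\left|X_t^{\theta,x}\right|^2 = \left(2\langle \mu(X_t^{\theta,x},\theta),\ X_t^{\theta,x}\rangle + \left|\sigma(X_t^{\theta,x},\theta)\right|^2\right)dt + 2\left\langle X_t^{\theta,x},\ \sigma(X_t^{\theta,x},\theta)\,dW_t\right\rangle,
\eeq
where $|\sigma|^2$ denotes the squared Frobenius norm, which is the trace of $\sigma\sigma^\top$. Since $|\sigma|^2 \le 7|\sigma|^2$, the drift is bounded above using \eqref{4 moment dis} by $-\beta|X_t^{\theta,x}|^2 + C$. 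After taking expectations (see below for the martingale term) this yields the differential inequality
\beq
\label{moment ode}
\frac{d}{dt}\,\e\left|X_t^{\theta,x}\right|^2 \le -\beta\,\e\left|X_t^{\theta,x}\right|^2 + C.
\eeq

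From \eqref{moment ode} the conclusion follows by a standard integrating-factor argument: multiplying by $e^{\beta t}$ gives $\frac{d}{dt}\left(e^{\beta t}\e|X_t^{\theta,x}|^2\right) \le C e^{\beta t}$, and integrating over $[0,t]$ yields $e^{\beta t}\e|X_t^{\theta,x}|^2 \le |x|^2 + \frac{C}{\beta}(e^{\beta t}-1)$, hence $\e|X_t^{\theta,x}|^2 \le e^{-\beta t}|x|^2 + \frac{C}{\beta} \le C'(1 + e^{-\beta t}|x|^2)$ with $C' = \max\{1, C/\beta\}$, which is the claimed bound.

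The one point requiring care is the passage to \eqref{moment ode}: the stochastic integral $\int_0^t 2\langle X_s^{\theta,x}, \sigma(X_s^{\theta,x},\theta)\,dW_s\rangle$ is a priori only a local martingale, so one cannot directly take expectations. I would handle this by the usual localization: introduce the stopping times $\tau_R = \inf\{t\ge 0: |X_t^{\theta,x}| \ge R\}$, apply It\^o's formula up to $t\wedge\tau_R$ (for which the stochastic integral is a genuine martingale with zero expectation since the integrand is bounded), obtain \eqref{moment ode} for the stopped process, run the Gr\"onwall estimate to get a bound uniform in $R$, and then let $R\to\infty$ using Fatou's lemma on the left-hand side. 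The linear growth bound $|\mu(x,\theta)|+|\sigma(x,\theta)| \le C(1+|x|)$ (noted in the Remark as a consequence of \eqref{Lip} and \eqref{0 bound}) guarantees $\tau_R \to \infty$ a.s.\ and, via the classical a priori moment estimates for SDEs with linearly growing coefficients, that $\e\sup_{s\le t}|X_s^{\theta,x}|^2 < \infty$ on every finite interval, which legitimizes the limiting procedure. This is the only genuine obstacle, and it is routine; everything else is the one-line dissipativity computation and Gr\"onwall's inequality, exactly as in the companion argument for Lemma \ref{difference lemma}.
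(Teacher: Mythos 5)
Your proof is correct and follows essentially the same route as the paper: It\^o's formula applied to $|X_t^{\theta,x}|^2$, a drift bound coming from the dissipativity condition (you invoke the pre-derived inequality \eqref{4 moment dis}, the paper re-derives it inline by splitting around $x=0$ and using Young's inequality), and then an integrating-factor/comparison argument. Your additional localization step with stopping times and Fatou's lemma is a welcome rigor point that the paper's proof leaves implicit, but it does not change the substance of the argument.
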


\begin{proof}
Using It\^{o}'s formula to $\left| X_t^{\theta, x}\right|^2$, we have
\bae
\frac{d}{dt} \e \left| X_t^{\theta, x} \right|^2 &= \e \left[ 2\left\langle \mu(X_t^{\theta, x}, \theta),\ X_t^{\theta, x} \right\rangle + \left| \sigma(X_t^{\theta, x}, \theta) \right|^2 \right] \\
&\le \e \left[ 2\left\langle \mu(X_t^{\theta, x}, \theta) -\mu(0, \theta),\ X_t^{\theta, x} \right\rangle + 2\left| \sigma(X_t^{\theta, x}, \theta) - \sigma(0, \theta) \right|^2 + 2\left\langle \mu(0, \theta),\ X_t^{\theta, x} \right\rangle + 2\left| \sigma(0,\theta) \right|^2 \right]\\
&\overset{(a)}{\le} -2\beta \e \left| X_t^{\theta, x} \right|^2 + \left( \beta \e \left|X_t^{\theta, x}\right|^2 + \frac1\beta \left| \mu(0, \theta) \right|^2 \right) + 2\left| \sigma(0,\theta) \right|^2 \\
&\overset{(a)}{\le} -\beta \e \left| X_t^{\theta, x} \right|^2 + C,
\eae
where step $(a)$ uses the dissipativity assumption \eqref{dis condition} and Young's inequality and step $(b)$ uses the bound \eqref{0 bound}. Therefore, using a comparison principle for ODEs,
\beq
\label{moment bound}
\e \left| X_t^{\theta, x} \right|^2 \le e^{-\beta t} |x|^2 + C.
\eeq
\end{proof}

Using similar calculations as in Proposition 4.1 of \cite{rockner2021strong}, several ergodicity results for $X^\theta_t$ can be proven.
\begin{proposition}
    \label{ergodic estimation}
    Under Assumptions (A\ref{dissipative}) - (A\ref{f}), we have the following ergodic bounds:
	\begin{itemize}
	    \item[(\romannumeral1)] There exists a constant $C$ such that for any $\theta\in \mathbb{R}^\ell, x \in \mathbb{R}^d$, and $t>0$, 
		\beq
		\label{theta decay}
		\left| \nabla_\theta^i \e f(X_t^{\theta,x}) - \nabla_\theta^i \e_{\pi_\theta}f(Y) \right| \leq C e^{-\beta t} (1+|x|), \quad i=0,1,2.
		\eeq
		\item[(\romannumeral2)] There exists a constant $C>0$ such that for any $\theta\in \mathbb{R}^\ell$ and $i = 0, 1, 2$,
		\beq
		\label{invariant density}
		\left|\nabla_\theta^i \e_{\pi_\theta} f\left(Y\right)\right| \leq C.
		\eeq
		\item[(\romannumeral3)] There exists constants $C, \gamma>0$ such that for any for any $\theta\in \mathbb{R}^\ell, x \in \mathbb{R}^d$, and $t>0$,
		\beq
		\label{theta x decay}
		\left|\nabla_{x}^{j} \nabla_\theta^i \e f(X_t^{\theta,x})\right| \leq Ce^{-\gamma t}, \quad i = 0, 1, \quad j = 1,2.
		\eeq
	\end{itemize}
\end{proposition}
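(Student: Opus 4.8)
The plan is to express every $\theta$- and $x$-derivative of $\e f(X^{\theta,x}_t)$ through the first- and higher-order variation processes of the SDE \eqref{ergodic process}, following the scheme of Proposition 4.1 of \cite{rockner2021strong}. I introduce the Jacobian flow $J_t=\nabla_x X^{\theta,x}_t$, the Hessian flow $K_t=\nabla^2_x X^{\theta,x}_t$, the first $\theta$-derivative $\tilde X^\theta_t=\nabla_\theta X^{\theta,x}_t$ (whose SDE is already displayed in the Introduction), the second $\theta$-derivative $\Theta_t=\nabla^2_\theta X^{\theta,x}_t$, and the mixed derivatives $V_t=\nabla_x\tilde X^\theta_t$, $U_t=\nabla^2_x\tilde X^\theta_t$, $\nabla_x\Theta_t$. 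Each of these solves a \emph{linear} SDE driven by $W_t$ whose homogeneous part is $\nabla_x\mu(X_t,\theta)(\,\cdot\,)\,dt+\nabla_x\sigma(X_t,\theta)(\,\cdot\,)\,dW_t$ and whose inhomogeneous forcing is a polynomial in the lower-order variations with coefficients built from the first, second and formal third derivatives of $\mu$ and $\sigma$ at $(X_t,\theta)$; no genuine third derivative is needed, because the Lipschitz-in-$x$ conditions \eqref{high 1}--\eqref{high 4} already supply the bound $|\nabla_x(\nabla^2 g(X^{\theta,x}_t,\theta))|\le C|J_t|$ that a bounded third derivative would give. Differentiating under the expectation and applying the chain rule then expresses $\nabla^j_x\nabla^i_\theta\,\e f(X^{\theta,x}_t)$ as the expectation of a finite sum of terms $\nabla^k f(X^{\theta,x}_t)$ contracted against a product of variation processes, with $k\ge 1$ whenever $i+j\ge 1$.

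The second step is the moment analysis of these processes. Applying It\^{o}'s formula to $|\,\cdot\,|^{2p}$ and using the dissipativity \eqref{dis condition} \emph{with its deliberate margin} (the coefficient $\frac{7}{2}$, resp.\ $7$) yields, besides Lemma \ref{moment stable}, the exponentially decaying bounds $\e|J_t|^{2p}+\e|K_t|^{2p}\le Ce^{-c_p t}$ for $p$ up to $4$ (the surplus dissipativity being exactly what the higher-order It\^{o} corrections consume), whereas the $\theta$-variations $\tilde X^\theta_t$ and $\Theta_t$, whose forcing is bounded but does not decay, only satisfy $\e|\tilde X^\theta_t|^{2p}+\e|\Theta_t|^{2p}\le C$. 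The key observation is that each \emph{mixed} variation ($V_t,U_t,\nabla_x\Theta_t$) is forced by terms every one of which contains at least one factor of a pure-$x$ variation, so a variation-of-constants estimate together with H\"older's inequality propagates exponential decay to it: $\e|V_t|^2+\e|U_t|^2+\e|\nabla_x\Theta_t|^2\le Ce^{-ct}$. Feeding these into the chain-rule representation, every term occurring in $\nabla^j_x\nabla^i_\theta\,\e f(X^{\theta,x}_t)$ with $j\ge1$ carries at least one exponentially small factor, which after absorbing into a slightly smaller rate $\gamma$ the polynomial-in-$t$ loss from the convolutions proves (iii); and the same representation, now using $\e|\tilde X^\theta_t|^2,\e|\Theta_t|^2\le C$ and the first-moment bound $\int|y|\,\pi_\theta(dy)\le C$ from \eqref{stationary moment}, gives the uniform-in-$t$ estimate $\sup_{x,\theta}|\nabla^i_\theta\,\e f(X^{\theta,x}_t)|\le C$, which together with the identification of the limit below yields (ii).

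For (i) I pass to the stationary quantity via the long-time limit. The case $i=0$ is Lemma \ref{ergodic}: writing $\e_{\pi_\theta}f(Y)=\int\e f(X^{\theta,y}_t)\,\pi_\theta(dy)$ by invariance and combining Lemma \ref{difference lemma} with \eqref{stationary moment}. For $i=1,2$ this identity cannot be differentiated naively because $\pi_\theta$ itself depends on $\theta$; instead I use the semigroup decomposition $\e f(X^{\theta,x}_{t+s})=\e[\phi_s(X^{\theta,x}_t)]$ with $\phi_s(y):=\e f(X^{\theta,y}_s)$, differentiate $i$ times in $\theta$, and isolate the leading term $\e[(\nabla^i_\theta\phi_s)(X^{\theta,x}_t)]$; every other term contains a factor $\nabla^{j'}_x\nabla^{i'}_\theta\phi_s$ with $j'\ge1$, hence is $O(e^{-\gamma s})$ by (iii), the remaining factors being bounded moments of variation processes. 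Taking $s=t$ and subtracting $\nabla^i_\theta\phi_t(x)$, the residual difference $\e[(\nabla^i_\theta\phi_t)(X^{\theta,x}_t)-(\nabla^i_\theta\phi_t)(x)]$ is bounded by $\|\nabla_x\nabla^i_\theta\phi_t\|_\infty\,\e|X^{\theta,x}_t-x|\le Ce^{-\gamma t}(1+|x|)$, using (iii) (for $i\le1$ this is (iii) directly; for $i=2$ the same chain-rule/moment estimate applies under the stated hypotheses) and $\e|X^{\theta,x}_t-x|\le C(1+|x|)$. Hence $|\nabla^i_\theta\,\e f(X^{\theta,x}_{2t})-\nabla^i_\theta\,\e f(X^{\theta,x}_t)|\le Ce^{-\gamma t}(1+|x|)$, and a dyadic telescoping shows that $\nabla^i_\theta\,\e f(X^{\theta,x}_t)$ converges as $t\to\infty$, at rate $Ce^{-\beta t}(1+|x|)$, to a limit which is independent of $x$ (by (iii) again) and, since the convergence is uniform in $\theta$, equals $\nabla^i_\theta\,\e_{\pi_\theta}f(Y)$; this gives (i) and completes (ii).

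The one real obstacle is the absence of uniform ellipticity in \eqref{ergodic process} (the same feature that, for the Poisson PDE later in the paper, forces a new class of equations): there is no parabolic smoothing, so the derivatives of the transition semi-group must be obtained by genuinely differentiating the flow, and the crux is to propagate exponential decay not only to $\e|J_t|^2$ but to the \emph{high} moments of the Jacobian and Hessian flows and to all the mixed variations. This is precisely why \eqref{dis condition} carries the enlarged coefficient $\frac{7}{2}$ in place of $\frac{1}{2}$, and why Assumptions A\ref{first derivative}--A\ref{high derivative} are phrased as uniform-in-$\theta$ Lipschitz and boundedness conditions on the derivatives up to the order consumed by the chain rule. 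Once the moment bounds are in hand, matching each derivative of $f$ against the correct product of variation processes and carrying out the convolution estimates is routine bookkeeping.
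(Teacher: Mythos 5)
Your proposal is correct and follows essentially the same route as the paper's Appendix A proof: derivative (variation) flows controlled by the dissipativity-derived inequality on $\nabla_x\mu,\nabla_x\sigma$, uniform moment bounds for the $\theta$-variations, exponential decay for the pure-$x$ and mixed variations, and the Markov/semigroup decomposition to transfer that decay to $\nabla_\theta^i \e f(X_t^{\theta,x})-\nabla_\theta^i\e_{\pi_\theta}f(Y)$. The only differences are cosmetic --- you compare times $t$ and $2t$ with dyadic telescoping and identify the limit via uniform-in-$\theta$ convergence of the derivatives, whereas the paper compares $t$ and $t+t_0$, bounds $\nabla_\theta^i\bigl(\hat f(t,x,\theta)-\e[\hat f(t,X_{t_0}^{\theta,x},\theta)]\bigr)$ and lets $t_0\to\infty$ by DCT --- and your substitution of the Lipschitz conditions \eqref{high 1}--\eqref{high 4} for genuine third derivatives matches the paper's treatment of the $i=2$ case.
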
 

The proof method for Proposition \ref{ergodic estimation} is the same as in Proposition $4.1$ of \cite{rockner2021strong}, although we need the convergence result for higher-order derivatives in \eqref{theta x decay}. For completeness, we provide the detailed proof for all orders of the derivatives in Appendix \ref{detailed ergodic}.

We next prove that a solution exists to a Poisson equation for the fluctuation terms and, furthermore, that the solution satisfies certain polynomial bounds. We first introduce the process $\tilde X_t^{\theta, x, \tilde x}$, which satisfies the SDE:
\begin{equation}
    \label{tilde sde}
	\left\{
	\begin{aligned}
		d \tilde X_t^{\theta, x, \tilde x} &= \left[ \nabla_{x} \mu(X_t^{\theta, x}, \theta) \tilde X_t^{\theta, x, \tilde x} + \nabla_\theta \mu(X_t^{\theta, x}, \theta)\right] dt + \left[ \nabla_{x} \sigma(X_t^{\theta, x}, \theta) \tilde X_t^{\theta, x, \tilde x} + \nabla_\theta \sigma(X_t^{\theta, x}, \theta) \right]dW_t, \\
        \tilde X_0^{\theta, x, \tilde x}&= \tilde x,
	\end{aligned}
	\right.
\end{equation}
where the Brownian is the same as in \eqref{ergodic process}. It should be noted that $\tilde X_t^{\theta, x, 0} = \nabla_\theta X_t^{\theta, x}$ almost surely.

\begin{lemma}
\label{poisson eq}
Define the error function 
\beq
\label{function}
G^1(x,\tilde x, \theta) = (\e_{\pi_\theta} f(Y)-\beta) \left(\nabla f(x) \tilde x - \nabla_\theta \e_{\pi_\theta}f(Y) \right)^\top
\eeq
and the function
\beq
\label{representation}
v^1(x, \tilde x, \theta) = -\int_0^\infty \e G^1(X_t^{\theta, x},\tilde X_t^{\theta, x, \tilde x}, \theta) dt.
\eeq
Let $\mathcal{L}^\theta_{x,\tilde x}$ denote the infinitesimal generator of the process $(X_\cdot^{\theta, x}, \tilde X_\cdot^{\theta,x,\tilde x})$, i.e. for any test function $\varphi$
$$
\begin{aligned}
\mathcal{L}^{\theta}_{x,\tilde x}\varphi\left(x, \tilde x \right) &= \mathcal{L}^{\theta}_{x} \varphi(x, \tilde x) + \sum_{k=1}^\ell \mathcal{L}^\theta_{\tilde x^{:, k}} \varphi(x, \tilde x) \\
&+ \sum_{j=1}^\ell \text{tr}\left(\nabla_{\tilde x^{:,j}}\nabla_x \varphi(x, \tilde x) \sigma(x,\theta)\left(\nabla_x \sigma(x,\theta)\tilde x^{:,j} + \frac{\partial \sigma(x, \theta)}{\partial \theta_j}\right)^\top\right)\\
&+ \sum_{j<k} \text{tr}\left(\nabla_{\tilde x^{:,k}}\nabla_{\tilde x^{:,j}} \varphi(x, \tilde x) \left(\nabla_x \sigma(x,\theta)\tilde x^{:,j} + \frac{\partial \sigma(x, \theta)}{\partial \theta_j}\right) \left(\nabla_x \sigma(x,\theta)\tilde x^{:,k} + \frac{\partial \sigma(x, \theta)}{\partial \theta_k}\right)^\top\right)
\end{aligned}
$$
where $\tilde x^{:, k}$ for $k \in \{1,\cdots, \ell \}$ is the k-th column of $\tilde x$.

Then, under Assumptions (A\ref{dissipative}) - (A\ref{f}), $v^1(x,\tilde x, \theta)$ is the classical solution of the Poisson equation 
\beq
\label{PDE}
\mathcal{L}_{x,\tilde x}^\theta u(x,\tilde x, \theta) = G^1(x,\tilde x, \theta),
\eeq
where $u = (u_1, \ldots, u_\ell)^\top \in \mathbb{R}^\ell$ is a vector,  $\mathcal{L}_{x,\tilde x}^\theta u(x,\tilde x, \theta) = (\mathcal{L}_{x,\tilde x}^\theta u_1(x,\tilde x, \theta), \ldots, \mathcal{L}_{x,\tilde x}^\theta u_{\ell}(x,\tilde x, \theta))^\top$.
Furthermore, the solution $v^1$ satisfies the bound
\bae
\label{control v1}
\left| v^1(x, \tilde x, \theta)\right| + \left|\nabla_\theta v^1(x, \tilde x, \theta)\right|+\left|\nabla_x v^1(x, \tilde x, \theta)\right| + \left|\nabla_{\tilde x} v^1(x, \tilde x, \theta)\right| \le C\left(1+ |x| + |\tilde x| \right),
\eae
where $C > 0$ is a constant which does not depend upon $(x,\tilde x, \theta)$.
\end{lemma}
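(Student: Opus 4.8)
The plan is to use the classical probabilistic construction of Poisson‑equation solutions (as in \cite{pardoux2001poisson, rockner2021strong}): exhibit $v^1$ as the time integral of the transition semigroup applied to $G^1$, show that this integral and its first two spatial derivatives converge by the exponential ergodic bounds of Proposition \ref{ergodic estimation}, and then verify the equation via Dynkin's formula. The single most useful observation is that $G^1$ depends on $\tilde x$ only \emph{affinely}: by linearity of \eqref{tilde sde} and uniqueness of strong solutions, $\tilde X_t^{\theta,x,\tilde x}=\nabla_\theta X_t^{\theta,x}+\nabla_x X_t^{\theta,x}\,\tilde x$ (the first‑variation process, using $\tilde X_t^{\theta,x,0}=\nabla_\theta X_t^{\theta,x}$). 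Applying the chain rule and interchanging $\e$ with differentiation in $x$ and $\theta$ (legitimate by the moment bound of Lemma \ref{moment stable} and differentiability of the flow), this yields the closed form
\[
\e\, G^1\!\big(X_t^{\theta,x},\tilde X_t^{\theta,x,\tilde x},\theta\big)=\big(\e_{\pi_\theta}f(Y)-\beta\big)\Big(\nabla_\theta\e f(X_t^{\theta,x})-\nabla_\theta\e_{\pi_\theta}f(Y)+\big(\nabla_x\e f(X_t^{\theta,x})\big)\tilde x\Big)^{\!\top},
\]
which reduces every estimate below to bounds on $\e f(X_t^{\theta,x})$ and its $x$‑ and $\theta$‑derivatives.

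First I would establish well‑posedness of \eqref{representation} and the bound \eqref{control v1}. By Proposition \ref{ergodic estimation}(i)--(ii), $|\e_{\pi_\theta}f(Y)-\beta|\le C$ and $|\nabla_\theta\e f(X_t^{\theta,x})-\nabla_\theta\e_{\pi_\theta}f(Y)|\le Ce^{-\beta t}(1+|x|)$ by \eqref{theta decay}, while $|\nabla_x\e f(X_t^{\theta,x})|\le Ce^{-\gamma t}$ by \eqref{theta x decay}; hence the integrand in \eqref{representation} is dominated by $Ce^{-ct}(1+|x|+|\tilde x|)$ and the integral converges absolutely. Differentiating the displayed identity in $x$, $\tilde x$ and $\theta$ and invoking Proposition \ref{ergodic estimation} together with its higher‑order extension (Appendix \ref{detailed ergodic}) — the $\theta$‑derivative keeps the difference $\nabla_\theta^i\e f(X_t^{\theta,x})-\nabla_\theta^i\e_{\pi_\theta}f(Y)$ intact so that \eqref{theta decay} applies, the $\tilde x$‑derivative drops the polynomial factor since $G^1$ is linear in $\tilde x$, and the spatial derivatives of $\e f(X_t^{\theta,x})$ decay without any polynomial factor by \eqref{theta x decay} — all partial derivatives of the integrand up to second order in $(x,\tilde x)$ and first order in $\theta$ are again dominated by $Ce^{-ct}(1+|x|+|\tilde x|)$. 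Dominated convergence then justifies differentiation under the integral sign, shows $v^1(\cdot,\cdot,\theta)\in C^2$ and $v^1\in C^1$ in $\theta$, and integrating the exponential‑in‑$t$ bound over $[0,\infty)$ gives \eqref{control v1}.

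Next I would verify the PDE. Let $P_t^\theta$ be the transition semigroup of the (degenerate) diffusion $(X_\cdot^{\theta,x},\tilde X_\cdot^{\theta,x,\tilde x})$, with generator $\mathcal L^\theta_{x,\tilde x}$, so that $v^1=-\int_0^\infty P_t^\theta G^1\,dt$. Since $G^1$ is $C^2$ in $(x,\tilde x)$ with at most linear growth (Assumption (A\ref{f})) and the coefficients of the $(X,\tilde X)$ system have at most linear growth with locally Lipschitz derivatives, $G^1$ lies in the domain of $\mathcal L^\theta_{x,\tilde x}$ and the backward Kolmogorov equation $\partial_t P_t^\theta G^1=\mathcal L^\theta_{x,\tilde x}P_t^\theta G^1$ holds. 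Applying $\mathcal L^\theta_{x,\tilde x}$ to $v^1$ and interchanging it with $\int_0^\infty dt$ (justified by the exponential decay of $P_t^\theta G^1$ and its first two spatial derivatives from the previous step and the linear growth of the coefficients of $\mathcal L^\theta_{x,\tilde x}$),
\[
\mathcal L^\theta_{x,\tilde x}v^1=-\int_0^\infty\mathcal L^\theta_{x,\tilde x}P_t^\theta G^1\,dt=-\int_0^\infty\partial_t\big(P_t^\theta G^1\big)\,dt=-\Big(\lim_{t\to\infty}P_t^\theta G^1-G^1\Big)=G^1,
\]
where $\lim_{t\to\infty}P_t^\theta G^1=0$ because, by the closed form above together with \eqref{theta decay} and \eqref{theta x decay}, the limiting value equals $(\e_{\pi_\theta}f(Y)-\beta)\big(\nabla_\theta\e_{\pi_\theta}f(Y)-\nabla_\theta\e_{\pi_\theta}f(Y)\big)^\top=0$.

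The main obstacle is the regularity step: one must control the spatial derivatives of $\e f(X_t^{\theta,x})$ — up to $\nabla_x^3$ through the $\tilde x$‑linear term, and up to $\nabla_x^2\nabla_\theta$ otherwise — uniformly in $\theta$, with exponential decay in $t$ and at most a linear factor in $(x,\tilde x)$. These are exactly the ergodic estimates of Proposition \ref{ergodic estimation} and its appendix extension; the key point, stressed in the introduction, is that working with the probabilistic representation rather than parabolic regularity means the non‑uniform ellipticity of $\mathcal L^\theta_{x,\tilde x}$ (caused by $X_t$ and $\tilde X_t$ sharing the Brownian motion) is immaterial — the decay is inherited from the dissipativity \eqref{dis condition} propagated through the variational equations. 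A secondary, routine technical point, once that decay is in hand, is checking that $P_t^\theta G^1$ belongs to the domain of $\mathcal L^\theta_{x,\tilde x}$ and that $\mathcal L^\theta_{x,\tilde x}$ commutes with the time integral.
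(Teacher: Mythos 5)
Your proposal is correct in substance, and it reaches the same destination as the paper by a noticeably cleaner route. The paper splits $v^1=v^{1,1}+v^{1,2}$ and then bounds $v^{1,2}$ and its derivatives by running It\^{o}-type variational estimates directly on the process level: exponential decay of $\e|\tilde X_t^{\theta,x,\tilde x_1}-\tilde X_t^{\theta,x,\tilde x_2}|^2$, of $\nabla_{\tilde x}\tilde X$, $\nabla_x\tilde X$, $\nabla_\theta\tilde X$, and the mixed derivative $\nabla_x\nabla_{\tilde x}\tilde X$ (cf.\ \eqref{tilde lip}, \eqref{tilde theta lip}, \eqref{tilde x lip}), always exploiting $\tilde X^{\theta,x,0}=\nabla_\theta X^{\theta,x}$. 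You instead observe that, by linearity of \eqref{tilde sde} and uniqueness, $\tilde X_t^{\theta,x,\tilde x}=\nabla_\theta X_t^{\theta,x}+\nabla_x X_t^{\theta,x}\tilde x$, which collapses the whole $\tilde x$-dependence into the first-variation process and reduces every estimate in \eqref{control v1} to the semigroup bounds \eqref{theta decay}--\eqref{theta x decay} of Proposition \ref{ergodic estimation}; this is mathematically equivalent to the paper's process-level estimates (e.g.\ \eqref{tilde lip} is exactly the decay of the homogeneous part $\nabla_x X_t(\tilde x_1-\tilde x_2)$) but shorter and more structural, and your explicit semigroup/Kolmogorov verification $\mathcal L^\theta_{x,\tilde x}v^1=-\int_0^\infty\partial_t P_t^\theta G^1\,dt=G^1$ makes transparent what the paper outsources to Lemma 3.3 of \cite{wang2022continuous}.

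One caveat to flag: for the classical-solution step you need second derivatives of $v^1$ in $(x,\tilde x)$, and through the $\tilde x$-linear term this calls for a decaying bound on $\nabla_x^3\,\e f(X_t^{\theta,x})$, which is \emph{not} literally contained in Proposition \ref{ergodic estimation} (item (\romannumeral3) stops at $j=2$); you present it as "exactly" the stated ergodic estimates, which is a slight overreach. This is not a fatal gap — the appendix techniques extend to that order, and the paper's own treatment of $\nabla_x^2 v^{1,2}$ ("follows from exactly the same method") leans on precisely the same unstated extension — but in a full write-up you should either prove the third-order semigroup bound or note explicitly that it follows from the same flow-derivative computations under Assumptions (A\ref{dissipative})--(A\ref{high derivative}). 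The first-order bounds actually asserted in \eqref{control v1} are completely covered by your argument as written.
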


\begin{proof}
We begin by proving that the integral \eqref{representation} is finite. We divide \eqref{representation} into two terms: 
\bae
v^1(x,\tilde x, \theta) &= ( \e_{\pi_\theta} f(Y) - \beta) \int_0^\infty \left( \nabla_\theta \e_{\pi_\theta}f(Y) - \e \nabla f(X_t^{\theta, x}) \tilde X_t^{\theta, x, \tilde x} \right)^\top dt \\
&= (\e_{\pi_\theta} f(Y) - \beta) \left[ \int_0^\infty \left( \nabla_\theta \e_{\pi_\theta}f(Y) - \nabla_\theta \e f(X_t^{\theta, x}) \right)^\top dt + \int_0^\infty \left( \nabla_\theta \e f(X_t^{\theta, x}) - \e \nabla f(X_t^{\theta, x}) \tilde X_t^{\theta, x, \tilde x} \right)^\top dt \right]\\
&=: v^{1,1}(x, \theta) + v^{1,2}(x, \tilde x, \theta).
\eae

We first bound $v^{1,1}(x, \theta)$. Following the method in Lemma $3.3$ of  \cite{wang2022continuous}, we have by Proposition \ref{ergodic estimation} and the dominated convergence theorem (DCT) that:
\bae
\left| v^{1,1}(x, \theta)\right| & \le C \int_0^\infty \left| \nabla_\theta \e_{\pi_\theta}f(Y) - \nabla_\theta \e f(X_t^{\theta, x}) \right| dt \le C(1+|x|),\\
\left|\nabla_\theta v^{1,1}(x, \theta)\right| & \le C \int_0^\infty \left| \nabla_\theta \e_{\pi_\theta}f(Y) - \nabla_\theta \e f(X_t^{\theta, x}) \right| +  C \int_0^\infty \left| \nabla^2_\theta \e_{\pi_\theta}f(Y) - \nabla^2_\theta \e f(X_t^{\theta, x}) \right| \le C(1+|x|),
\\
\left|\nabla^i_x v^{1,1}(x, \theta)\right| &\le C \int_0^\infty \left| \nabla_x^i \nabla_\theta \e f(X_t^{\theta, x}) \right| dt \le  C, \quad i =1, 2.
\eae

For $ v^{1,2}(x, \tilde x, \theta)$, 
define 
$$
Z_t = \tilde  X_t^{\theta, x. \tilde x_1} - \tilde X_t^{\theta, x, \tilde x_2}.
$$ 
We can derive a differential inequality for $Z^{:,k}_t$, the k-th column of $Z_t$, using the inequality (\ref{MuDerivInequality}):
\beq
\label{ito difference}
\frac{d}{dt} \e \left| Z^{:,k}_t \right|^2 \overset{(a)}{=} \e \left[ 2\left\langle \nabla_{x} \mu(X_t^{\theta, x_1}, \theta) Z^{:,k}_t,\ Z^{:,k}_t \right\rangle + \left| \nabla_{x} \sigma(X_t^{\theta, x_1}, \theta)Z^{:,k}_t \right|^2 \right] \le -\beta \e|Z^{:,k}_t|^2,
\eeq
where step $(a)$ is by using It\^{o}'s formula to $\left|Z^{:,k}_t \right|^2$. Therefore, we can prove the exponential decay:
\begin{eqnarray}
\label{tilde lip}
\e \left|\tilde X_t^{\theta, x, \tilde x_1} - \tilde X_t^{\theta, x, \tilde x_2} \right|^2 &\leq& C e^{-\beta t} |\tilde x_1 - \tilde x_2|^2, \notag \\
\e \left| \nabla_{\tilde x} \tilde X_t^{\theta, x, \tilde x} \right|^2 &\leq& C e^{-\beta t}.
\end{eqnarray}
Let $\tilde X^{\theta, x, \tilde x, :, k}$ denote the k-th column of the matrix $\tilde X_t^{\theta, x, \tilde x}$ and for any $m \in \{1,\cdots, d\}, n \in \{1, \cdots, \ell\}$, we know
\beq
d \frac{\partial \tilde X_t^{\theta, x, \tilde x, :, k}}{\partial \tilde x^{m,n}} = \nabla_x \mu(X_t^{\theta, x}, \theta) \frac{\partial \tilde X_t^{\theta, x, \tilde x, :, k}}{\partial \tilde x^{m,n}} dt + \nabla_x \sigma(X_t^{\theta, x}, \theta) \frac{\partial \tilde X_t^{\theta, x, \tilde x, :, k}}{\partial \tilde x^{m,n}} dW_t,
\eeq
where $\tilde x^{m,n}$ denotes the $(m, n)$ element of the matrix $\tilde x$. Let
$$
\tilde Z^1_t = \frac{\partial \tilde X_t^{\theta, x, \tilde x_1, :, k}}{\partial \tilde x^{m,n}} - \frac{\partial \tilde X_t^{\theta, x, \tilde x_2, :, k}}{\partial \tilde x^{m,n}}, \quad \tilde Z^2_t = \frac{\partial \tilde X_t^{\theta, x_1, \tilde x, :, k}}{\partial \tilde x^{m,n}} - \frac{\partial \tilde X_t^{\theta, x_2, \tilde x, :, k}}{\partial \tilde x^{m,n}}.
$$ 
Note that $\tilde Z_t^1$ satisfies the SDE
\beq
d\tilde Z^1_t = \nabla_x \mu(X_t^{\theta, x}, \theta) \tilde Z^1_t dt + \nabla_x \sigma(X_t^{\theta, x}, \theta) \tilde Z^1_t dW_t
\eeq
Similar to \eqref{ito difference}, we can get
\beq
\frac{d}{dt} \e \left| \tilde Z^1_t \right|^2 \le -\beta \e\left| \tilde Z^1_t \right|^2 
\eeq
which derives 
\begin{eqnarray}
\label{tilde lip 2}
\e \left|\nabla_{\tilde x} \tilde X_t^{\theta, x, \tilde x_1} - \nabla_{\tilde x} \tilde X_t^{\theta, x, \tilde x_2} \right|^2 &\leq& C e^{-\beta t} |\tilde x_1 - \tilde x_2|^2, \notag \\
\e \left| \nabla^2_{\tilde x} \tilde X_t^{\theta, x, \tilde x} \right|^2 &\leq& C e^{-\beta t}.
\end{eqnarray}
Then for $\tilde Z_t^2$
\bae
 d \tilde Z_t^2 &= \left( \nabla_x \mu(X_t^{\theta, x_1}, \theta) \frac{\partial \tilde X_t^{\theta, x_1, \tilde x, :, k}}{\partial \tilde x^{m,n}} - \nabla_x \mu(X_t^{\theta, x_2}, \theta) \frac{\partial \tilde X_t^{\theta, x_2, \tilde x, :, k}}{\partial \tilde x^{m,n}} \right) dt \\
&+ \left( \nabla_x \sigma(X_t^{\theta, x_1}, \theta) \frac{\partial \tilde X_t^{\theta, x_1, \tilde x, :, k}}{\partial \tilde x^{m,n}} - \nabla_x \sigma(X_t^{\theta, x_2}, \theta) \frac{\partial \tilde X_t^{\theta, x_2, \tilde x, :, k}}{\partial \tilde x^{m,n}} \right) dW_t,
\eae
and as in \eqref{grad theta lip cal}
\bae
\frac{d}{dt} \e \left| \tilde Z_t^2 \right|^2 =& \e \left[ 2\left\langle \nabla_x \mu(X_t^{\theta, x_1}, \theta) \frac{\partial \tilde X_t^{\theta, x_1, \tilde x, :, k}}{\partial \tilde x^{m,n}} - \nabla_x \mu(X_t^{\theta, x_2}, \theta) \frac{\partial \tilde X_t^{\theta, x_2, \tilde x, :, k}}{\partial \tilde x^{m,n}},\ \tilde Z^2_t \right\rangle \right]\\
+& \e \left[ \left| \nabla_x \sigma(X_t^{\theta, x_1}, \theta) \frac{\partial \tilde X_t^{\theta, x_1, \tilde x, :, k}}{\partial \tilde x^{m,n}} - \nabla_x \sigma(X_t^{\theta, x_2}, \theta) \frac{\partial \tilde X_t^{\theta, x_2, \tilde x, :, k}}{\partial \tilde x^{m,n}} \right|^2 \right] \\
\le& \e \left[ 2\left\langle \nabla_{x} \mu(X_t^{\theta, x_1}, \theta) \tilde Z_t^2,\ \tilde Z^2_t \right\rangle + 2\left| \nabla_{x} \sigma(X_t^{\theta, x_1}, \theta) \tilde Z_t^2 \right|^2\right]
+ \beta \e |\tilde Z^2_t|^2 + C \e \left| X_t^{\theta, x_1} - X_t^{\theta, x_2}\right|^2 \\
\le& -\beta \e|\tilde Z^2_t|^2 + Ce^{-\beta t}|x_1 - x_2|^2,
\eae
which derives
\begin{eqnarray}
\label{tilde x lip}
\e \left|\nabla_{\tilde x} \tilde X_t^{\theta, x_1, \tilde x} - \nabla_{\tilde x} \tilde X_t^{\theta, x_2, \tilde x} \right|^2 &\leq& C e^{-\beta t} |x_1 - x_2|^2, \notag \\
\e \left| \nabla_x \nabla_{\tilde x} \tilde X_t^{\theta, x, \tilde x} \right|^2 &\leq& C e^{-\beta t}.
\end{eqnarray}

Combining \eqref{tilde lip}, \eqref{tilde lip 2} and \eqref{tilde x lip} we can establish bounds on $v^{1,2}(x, \tilde x, \theta)$.
\bae
\left| v^{1,2}(x, \tilde x, \theta) \right| &\overset{(a)}{\le} C \int_0^\infty \e \left| \nabla f(X_t^{\theta, x}) \left(\tilde X_t^{\theta, x, \tilde x} - \tilde X_t^{\theta, x, 0} \right) \right| dt \le \int_0^\infty Ce^{-\frac\beta2 t}|\tilde x|dt \le C |\tilde x|\\
\left| \nabla^i_{\tilde x} v^{1,2}(x, \tilde x, \theta) \right| &\le C \int_0^\infty \e \left| \nabla^i_{\tilde x} \tilde X_t^{\theta, x, \tilde x} \right| dt \le \int_0^\infty Ce^{-\frac\beta2 t}dt \le C, \quad i=1,2 \\
\left| \nabla_x \nabla_{\tilde x} v^{1,2}(x, \tilde x, \theta) \right| &\le C \int_0^\infty \e \left[ \left|\nabla_x^2 f(X_t^{\theta,x}) \nabla_x X_t^{\theta,x}\right| \cdot \left|\nabla_{\tilde x} X_t^{\theta,x, \tilde x}\right| \right] dt + C \int_0^\infty \e \left| \nabla_x \nabla_{\tilde x} \tilde X_t^{\theta, x, \tilde x} \right| dt \le C
\eae
where in step $(a)$ we use the fact $\nabla_\theta X_t^{\theta, x} \overset{a.s.}{=} \tilde X_t^{\theta, x, 0}$. 

The analysis of $\nabla^i_x v^{1,2}$ for $i =1,2$ and $\nabla_\theta v^{1,2}$ is similar to the calculations for $v^{1,1}$. Define
$$
\bar Z_t = \nabla_\theta \tilde  X_t^{\theta, x. \tilde x_1} - \nabla_\theta \tilde X_t^{\theta, x, \tilde x_2}.
 $$ 
$\bar Z_t$ satisfies the SDE: 
\bae
\label{bar Z}
d \bar Z_t &= \left( \left\langle \nabla_x^2 \mu(X_t^{\theta, x}, \theta) \nabla_\theta X_t^{\theta, x},\  Z_t \right\rangle + \nabla_x \nabla_\theta \mu(X_t^{\theta, x}, \theta) Z_t + \nabla_x \mu(X_t^{\theta, x}, \theta) \bar Z_t\right) dt, \\
&+ \left(\left\langle \nabla_x^2 \sigma(X_t^{\theta, x}, \theta) \nabla_\theta X_t^{\theta, x},\  Z_t \right\rangle + \nabla_x \nabla_\theta \sigma(X_t^{\theta, x}, \theta) Z_t + \nabla_x \sigma(X_t^{\theta, x}, \theta) \bar Z_t\right) dW_t,
\eae
where $ \left\langle \ ,\ \right\rangle$ in the equation above is defined as:
\bae
\label{bar Z element}
\left\langle \nabla_x^2 \mu(X_t^{\theta, x}, \theta) \nabla_\theta X_t^{\theta, x},\  Z_t \right\rangle_{m,p,q} &= \left\langle \nabla_x^2 \mu_m(X_t^{\theta,x}, \theta)\frac{\partial X_t^{\theta, x}}{\partial \theta_q},\ Z_t^{:, p} \right\rangle,\\
\left\langle \nabla_x^2 \sigma(X_t^{\theta, x}, \theta) \nabla_\theta X_t^{\theta, x},\  Z_t \right\rangle_{m,n,p,q} &= \left\langle \nabla_x^2 \sigma_{mn}(X_t^{\theta,x}, \theta)\frac{\partial X_t^{\theta, x}}{\partial \theta_q},\ Z_t^{:, p} \right\rangle.
\eae
Thus by the same calculations as in \eqref{grad theta lip cal} and the uniform bounds for the derivatives of $\mu, \sigma$, we can derive the differential inequality:
\beq
\frac{d}{dt} \e \left| \bar Z_t \right|^2 \le -\beta \e|\bar Z_t|^2 + C \e |Z_t|^2.
\eeq
Using an integrating factor, we have 
\beq
\frac{d}{dt} \left( e^{\beta t} \e \left| \bar Z_t \right|^2\right) \le Ce^{\beta t} \e \left| Z_t \right|^2,
\eeq
which combined with \eqref{tilde lip} yields
\begin{eqnarray}
\label{tilde theta lip}
\e |\nabla_\theta \tilde X_t^{\theta, x, \tilde x_1} - \nabla_\theta \tilde X_t^{\theta, x, \tilde x_2}|^2 &=& \e \left| \bar Z_t \right|^2 \le e^{-\beta t} \left| \tilde x_1 - \tilde x_2 \right|^2 + e^{-\beta t} \int_0^t e^{\beta s} \e \left| Z_s \right|^2 ds \notag \\
&\le& C e^{-\frac\beta2 t} |\tilde x_1 - \tilde x_2|^2.
\end{eqnarray}
Consequently,
\bae
\left| \nabla_\theta v^{1,2}(x, \tilde x, \theta) \right|
\le& C |\tilde x| \cdot \left|\nabla_\theta \e_{\pi_\theta} f(Y)\right| + C  \int_0^\infty \e \left| \nabla f(X_t^{\theta, x}) \left(\nabla_\theta \tilde X_t^{\theta, x, \tilde x} - \nabla_\theta \tilde X_t^{\theta, x, 0} \right) \right| dt \\
+& \int_0^\infty  \e \left| \left\langle \nabla^2 f(X_t^{\theta, x}) \nabla_\theta X_t^{\theta, x},\ \tilde X_t^{\theta, x, \tilde x} - \tilde X_t^{\theta, x, 0}  \right\rangle \right| dt \\
\overset{(a)}{\le}& C|\tilde x| + \int_0^\infty Ce^{-\frac\beta4 t}|\tilde x|dt + \int_0^\infty Ce^{-\frac\beta2 t}|\tilde x|dt \\
\le& C |\tilde x|,
\eae
where in step $(a)$ we use the Cauchy-Schwarz inequality, \eqref{grad theta bound}, \eqref{tilde lip} and \eqref{tilde theta lip}.

Finally, for the derivatives with respect to $x$, define 
$$
\hat Z_t = \nabla_x \tilde  X_t^{\theta, x. \tilde x_1} - \nabla_x \tilde X_t^{\theta, x, \tilde x_2} 
$$
and as in \eqref{bar Z} and \eqref{bar Z element} it satisfies the SDE
\bae
d \hat Z_t = \left( \left\langle \nabla_x^2 \mu(X_t^{\theta, x}, \theta) \nabla_x X_t^{\theta, x},\  Z_t \right\rangle + \nabla_x \mu(X_t^{\theta, x}, \theta) \hat Z_t\right) dt + \left(\left\langle \nabla_x^2 \sigma(X_t^{\theta, x}, \theta) \nabla_\theta X_t^{\theta, x},\ Z_t \right\rangle + \nabla_x \sigma(X_t^{\theta, x}, \theta) \hat Z_t\right) dW_t,
\eae
Similarly, we can derive the differential inequality
$$
\frac{d}{dt} \e |\hat Z_t|^2 \le -\beta \e|\hat Z_t|^2 + C\e |Z_t|^2.
$$
Consequently,
\beq
\label{tilde x lip}
\e \left|\nabla_x \tilde X_t^{\theta, x, \tilde x_1} - \nabla_x \tilde X_t^{\theta, x, \tilde x_2}\right|^2 \le C e^{-\frac\beta2 t} \left|\tilde x_1 - \tilde x_2\right|^2.
\eeq
Due to Lemma \ref{difference lemma}, 
\beq
\e\left| X_t^{\theta, x_1} - X_t^{\theta, x_2}  \right|^2 \le e^{-\beta t} |x_1 - x_2|^2,
\eeq
which, combined with the dominated convergence theorem, yields
\beq
\label{gradient x bound}
\e\left| \nabla_x X_t^{\theta, x} \right|^2 \le e^{-\beta t}.
\eeq
Therefore,
\bae
&\left| \nabla_x v^{1,2}(x, \tilde x, \theta) \right| \\
\le& C \int_0^\infty  \e \left| \nabla f(X_t^{\theta, x}) \left(\nabla_x \tilde X_t^{\theta, x, \tilde x} - \nabla_x \tilde X_t^{\theta, x, 0} \right) \right| dt + C \int_0^\infty  \e \left|\left\langle \nabla^2 f(X_t^{\theta, x}) \nabla_x X_t^{\theta, x},\  \tilde X_t^{\theta, x, \tilde x} - \tilde X_t^{\theta, x, 0} \right\rangle \right| dt \\
\overset{(a)}{\le}& \int_0^\infty Ce^{-\beta t}|\tilde x|dt + \int_0^\infty Ce^{-\frac\beta2 t}|\tilde x|dt \\
\le& C |\tilde x|,
\eae
where step $(a)$ is by Cauchy-Schwarz inequality, \eqref{gradient x bound}, \eqref{tilde lip} and \eqref{tilde x lip}. The bound for $\nabla_x^2 v^{1,2}$ follows from exactly the same method. Combining the bounds for $v^{1,1}$ and $v^{1,2}$ proves the desired bound \eqref{control v1}. Using the same calculations as in Lemma 3.3 of \cite{wang2022continuous}, we can show that $v^1$ is the classical solution of PDE \eqref{PDE} and thus the proof is completed. 
\end{proof}

We will also need bounds on the moments of $X_t$ and $\tilde X_t$ in order to analyze the fluctuation term $\Delta^i_{\tau_n, \sigma_n + \eta}$. 
\begin{lemma}
	\label{moment}
	There exists a constant $C > 0$ such that the processes $X_t, \tilde X_t$ in \eqref{nonlinear update} satisfy
	\beq
	\label{moment bound}
	\e_x |X_t|^8 \le C \left( 1 + |x|^8 \right), \quad \e_{x,\tilde x} |\tilde X_t|^8 \le C\left( 1 + |\tilde x|^8 \right),
	\eeq 
	where $\e_{x,\tilde x}$ is the conditional expectation given that $X_0 = x$ and $\tilde X_0 = \tilde x$. Furthermore, we have the bounds
	\begin{eqnarray}
	\e_x \left( \sup\limits_{0 \le t' \le t} |X_{t'}|^4 \right) &= O(\sqrt t) \quad \text{as}\  t \to \infty, \label{uniform moment bound 1} \\
	\e_{x, \tilde x} \left( \sup\limits_{0 \le t' \le t} |\tilde X_{t'}|^4 \right) &= O(\sqrt t) \quad \text{as}\ t \to \infty. \label{uniform moment bound 2}	\end{eqnarray}
\end{lemma}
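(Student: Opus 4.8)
\emph{Plan.} The idea is to establish the uniform-in-time eighth moment bounds first and then bootstrap them to the running-supremum estimates by a Burkholder--Davis--Gundy argument combined with a chaining over unit time intervals. None of this uses the learning-rate assumption (A\ref{lr}); only the dissipativity and linear-growth structure of $\mu,\sigma$ and their first derivatives enter.

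\emph{Step 1 (uniform eighth moments).} Apply It\^o's formula to $|X_t|^8=(|X_t|^2)^4$. A direct computation gives $\nabla^2|x|^8 = 8|x|^6 I + 48|x|^4 xx^\top$, so, using $|\sigma^\top X|^2\le |\sigma|^2|X|^2$, the drift of $|X_t|^8$ is dominated by $4|X_t|^6\big(2\langle\mu(X_t,\theta_t),X_t\rangle+7|\sigma(X_t,\theta_t)|^2\big)$; by \eqref{4 moment dis} this is at most $4|X_t|^6(-\beta|X_t|^2+C)$, and Young's inequality absorbs $4C|X_t|^6$ into $2\beta|X_t|^8+C'$. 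Localizing with $\tau_R=\inf\{t:|X_t|\ge R\}$ so that the It\^o integral has zero expectation, taking expectations, applying Gronwall, and letting $R\to\infty$ (Fatou, using non-explosion) yields $\tfrac{d}{dt}\e_x|X_t|^8\le -2\beta\,\e_x|X_t|^8+C'$, hence $\e_x|X_t|^8\le e^{-2\beta t}|x|^8+C\le C(1+|x|^8)$ uniformly in $t$; the factor $\tfrac72$ (equivalently $7$) in \eqref{dis condition} is precisely what makes the eighth power close. For $\tilde X_t\in\mathbb{R}^{d\times\ell}$, write $|\tilde X_t|^8\le \ell^3\sum_{k=1}^\ell|\tilde X_t^{:,k}|^8$ and apply It\^o to each column, which solves a linear SDE with drift $\nabla_x\mu(X_t,\theta_t)\tilde X_t^{:,k}+\partial_{\theta_k}\mu(X_t,\theta_t)$ and diffusion $\nabla_x\sigma(X_t,\theta_t)\tilde X_t^{:,k}+\partial_{\theta_k}\sigma(X_t,\theta_t)$. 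The replacement for \eqref{4 moment dis} is the derivative dissipativity $2y^\top\nabla_x\mu(x,\theta)y+7|\nabla_x\sigma(x,\theta)y|^2\le -2\beta|y|^2$ (obtained by differentiating \eqref{dis condition}, i.e. inequality \eqref{MuDerivInequality}); the remaining cross terms with $\partial_\theta\mu,\partial_\theta\sigma$ are handled by Young's inequality using the uniform bounds $|\partial_\theta\mu|,|\partial_\theta\sigma|,|\nabla_x\sigma|\le C$ that follow from \eqref{Lip}. This gives $\tfrac{d}{dt}\e_{x,\tilde x}|\tilde X_t^{:,k}|^8\le -2\beta\,\e_{x,\tilde x}|\tilde X_t^{:,k}|^8+C$ and hence $\e_{x,\tilde x}|\tilde X_t|^8\le C(1+|\tilde x|^8)$.

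\emph{Step 2 (local supremum bound).} For any starting point $(z,\tilde z)$ and any $u\in[0,1]$, using $|\mu(x,\theta)|+|\sigma(x,\theta)|\le C(1+|x|)$ (uniformly in $\theta$), the BDG inequality, H\"older's inequality, and a Gronwall argument on $[0,1]$ (run first up to $\tau_R$, then send $R\to\infty$), one obtains $\e_z\sup_{0\le s\le1}|X_s|^8\le C(1+|z|^8)$. Since the coefficients of the $\tilde X$ equation depend on $X$ only through the uniformly bounded $\nabla_x\mu,\nabla_x\sigma$, the same argument gives $\e_{z,\tilde z}\sup_{0\le s\le1}|\tilde X_s|^8\le C(1+|\tilde z|^8)$, with $C$ independent of the starting point and of the time-dependent coefficients. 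Applying this on each interval $[n,n+1]$ conditionally on $\mathcal{F}_n$ and then inserting the uniform bounds of Step 1 gives, for a constant $C$ independent of $n$,
\[
\e_x\sup_{n\le t'\le n+1}|X_{t'}|^8\le C\big(1+\e_x|X_n|^8\big)\le C(1+|x|^8),\qquad \e_{x,\tilde x}\sup_{n\le t'\le n+1}|\tilde X_{t'}|^8\le C(1+|\tilde x|^8).
\]
Then for $t\ge0$ set $N=\lfloor t\rfloor$ and $\xi_n=\sup_{n\le t'\le n+1}|X_{t'}|^4$; since $\sup_{0\le t'\le t}|X_{t'}|^4\le\max_{0\le n\le N}\xi_n\le\big(\sum_{n=0}^N\xi_n^2\big)^{1/2}$, Jensen's inequality and the previous display give
\[
\e_x\sup_{0\le t'\le t}|X_{t'}|^4\le\Big(\sum_{n=0}^N\e_x\sup_{n\le t'\le n+1}|X_{t'}|^8\Big)^{1/2}\le\big(C(N+1)(1+|x|^8)\big)^{1/2}=O(\sqrt t),
\]
and the identical argument yields \eqref{uniform moment bound 2}.

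\emph{Main obstacle.} The crux is Step 1: the uniform-in-time bound only closes because of the exact constant in \eqref{dis condition}, and for $\tilde X_t$ one must check that the inherited derivative dissipativity together with the uniform bounds on $\nabla_\theta\mu,\nabla_\theta\sigma$ suffices to absorb every cross term at the eighth-power level. The rest — localization to kill the It\^o integral, the unit-interval Gronwall estimate, and the Cauchy--Schwarz chaining that produces the $\sqrt t$ rate instead of a linear one — is routine.
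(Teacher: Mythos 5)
Your proposal is correct, and it uses the same basic toolkit as the paper (It\^o's formula plus the dissipativity \eqref{4 moment dis}, whose constant $7$ is exactly what the eighth power requires, then the Burkholder--Davis--Gundy inequality and the square-root trick $\e \sup |X|^4 \le \left(\e \sup |X|^8\right)^{1/2}$), but the execution differs in two places. For the moment bounds, you apply It\^o directly to $|X_t|^8$ and absorb the lower-order term by Young's inequality, whereas the paper inducts through the moments $2,4,6,8$, at each stage feeding the previously obtained moment into a Gronwall estimate; for $\tilde X_t$ both arguments rest on the differentiated dissipativity (your constant-$7$ version does follow from \eqref{dis condition} by the same difference-quotient argument the paper uses for \eqref{MuDerivInequality}, which is only stated there with $3/2$) together with the uniform bounds on $\nabla_\theta \mu, \nabla_\theta \sigma, \nabla_x \sigma$, and your $\epsilon$-room absorption of the cross terms closes precisely because $|\nabla_x \sigma\, y| \le C|y|$. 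For the supremum estimate, the paper runs a single BDG/Young argument over the whole interval $[0,t]$ to get $\e_x \sup_{0 \le t' \le t} |X_{t'}|^8 = O(t)$ and then applies Cauchy--Schwarz, while you restart on unit blocks $[n,n+1]$ conditionally on $\mathcal{F}_n$, obtain a bound uniform in $n$, and chain via $\max_n \xi_n \le \left(\sum_n \xi_n^2\right)^{1/2}$ and Jensen; the conditional restart is legitimate here since the BDG/Gronwall estimate only uses the coefficient bounds, which are uniform in the adapted process $\theta_s$. The two routes cost the same and give the same $O(\sqrt t)$; your blockwise version yields the marginally stronger statement that $\e \sup_{[n,n+1]} |X|^8$ is bounded uniformly in $n$ (which immediately recovers the paper's $O(t)$ bound), and, as you note, neither argument uses Assumption A\ref{lr}.
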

\begin{proof}

By It\^{o}'s formula, for any $m\ge1$ we have 
\bae
\label{ito formula}
d|X_t|^{2m} &= 2m |X_t|^{2m-2} \langle \mu(X_t, \theta_t),\ X_t \rangle dt + 2m |X_t|^{2m-2} \langle \sigma(X_t, \theta_t), \ X_t \rangle dW_t \\
&+ m |X_t|^{2m-2} \cdot |\sigma(X_t, \theta_t)|^2 dt + 2m(m-1) |X_t|^{2m-4} \cdot |\langle X_t,\  \sigma(X_t, \theta_t) \rangle|^2.
\eae
We use induction to prove the bound for the 8-th moment. First let $m=1$ in \eqref{ito formula}, by the same proof as in Lemma \ref{moment stable}, we have
\beq
\frac{d}{dt} \e_x \left| X_t \right|^2 \le -\beta \e_x \left| X_t \right|^2 + \e_x \left( \frac1\beta \left| \mu(0, \theta_t) \right|^2 + 2\left| \sigma(0,\theta_t) \right|^2 \right) \le -\beta \e_x \left| X_t \right|^2 + C,
\eeq
which yields the bound for the second moment
\beq
\label{second moment bound x}
\e_x |X_t|^2 \le C \left( 1 + |x|^2 \right).
\eeq 
For $k \in \{1,2,\cdots, \ell\}$, let $\tilde X_t^{:, k}$ denote the k-th column of $\tilde X_t$. $\left|\tilde X_t^{:, k}\right|^2$ satisfies the following SDE: 
\bae
\label{tilde ito}
d \left|\tilde X^{:,k}_t\right|^2 =& 2\left \langle \nabla_x \mu( X_t, \theta_t ) \tilde X^{:, k}_t +\frac{\partial \mu(X_t, \theta_t)}{\partial \theta_k}, \ \tilde X^{:, k}_t \right\rangle dt + \left| \nabla_x \sigma( X_t, \theta_t) \tilde X^{:,k}_t + \frac{\partial \sigma(X_t, \theta_t)}{\partial \theta_k} \right|^2 dt \\
+& 2\left\langle \nabla_x \sigma( X_t, \theta_t )\tilde X^{:,k}_t + \frac{\partial \sigma(X_t, \theta_t)}{\partial \theta_k}, \ \tilde X^{:,k}_t \right\rangle dW_t.
\eae
Similar to \eqref{grad theta cal}, we can derive the differential inequality
\beq
\label{tilde dissiptive}
\frac{d}{dt} \e_{x, \tilde x} \left| \tilde X^{:,k}_t \right|^2 \le -\beta \e_{x, \tilde x} \left| \tilde X^{:,k}_t \right|^2 + \e_{x,\tilde x} \left( \frac1\beta \left| \frac{\partial \mu(X_t, \theta_t)}{\partial \theta_k}\right|^2 + 2 \left| \frac{\partial \sigma(X_t, \theta_t)}{\partial \theta_k} \right|^2  \right) \le -\beta \e_{x, \tilde x} \left| \tilde X^{:, k}_t \right|^2 + C.
\eeq
Therefore,
\beq
\label{second moment bound tilde x}
\quad \e_{x,\tilde x} |\tilde X_t|^2 \le C\left( 1 + |\tilde x|^2 \right).
\eeq

Now let $m=2$ in \eqref{ito formula} and use the bound \eqref{4 moment dis},
\bae
\frac{d}{dt} \e_x |X_t|^4 &= 4 \e_x \left( |X_t|^2 \left\langle \mu(X_t, \theta_t) ,\ X_t \right\rangle \right) dt + \e_x \left( 2|X_t|^2 |\sigma(X_t, \theta_t)|^2 + 4 \left| \langle \sigma(X_t, \theta_t),\  X_t \rangle \right|^2 \right) \\
&\le \e_x \left[ |X_t|^2 \left( 4\langle \mu(X_t, \theta_t),\ X_t \rangle + 6|\sigma(X_t, \theta_t)|^2 \right) \right] \\
&\le -\beta \e_x |X_t|^4 + C\e_x |X_t|^2,
\eae
which together with \eqref{second moment bound x} and Gronwall's inequality prove the bound for fourth moment of $X_t$. Similarly, as in \eqref{trick}
\bae
\frac{d}{dt} \e_{x,\tilde x} \left|\tilde X^{:,k}_t\right|^4 &\le \e_{x, \tilde x} \left[ \left|\tilde X^{:,k}_t\right|^2 \left( 4\left\langle \nabla_x \mu(X_t, \theta_t)\tilde X^{:,k}_t + \frac{ \partial \mu(X_t, \theta_t)}{\partial \theta_k}, \ \tilde X^{:,k}_t \right\rangle + 6\left|\nabla_x \sigma(X_t, \theta_t)\tilde X^{:,k}_t + \frac{\partial \sigma(X_t, \theta_t)}{\partial \theta_k}\right|^2 \right) \right] \\
&\le -\beta \e_{x, \tilde x} \left|\tilde X^{:,k}_t\right|^4 + C\e_{x, \tilde x} \left|\tilde X^{:,k}_t\right|^2,
\eae
which together with \eqref{second moment bound tilde x} derives the estimate for $\tilde X_t$ in \eqref{moment bound}. By induction, we can prove the bound for the sixth and eighth moments of $(X_t, \tilde X_t)$ in \eqref{moment bound}.

Finally, as in \eqref{ito formula} and use \eqref{trick}, we have 
\bae
\left| X_t \right|^8 &= \left| x \right|^8 +  8\int_0^t |X_s|^6 \left\langle \mu(X_s, \theta_s), \ X_s \right\rangle ds + 8\int_0^t |X_s|^6 \left\langle \sigma(X_s, \theta_s),\ X_s \right\rangle dW_s \\
&+ 24\int_0^t \left|X_t\right|^4 \cdot \left| \left\langle \sigma(X_s, \theta_s), \ X_s \right\rangle \right|^2 ds + 4\int_0^t |X_s|^6 \cdot \left| \sigma(X_s, \theta_s) \right|^2 ds   \\ 
&\le -4\beta \int_0^t \left| X_s \right|^8 ds + C \int_0^t |X_s|^6 ds + 8\int_0^t |X_s|^6 \langle \sigma(X_s, \theta_s),\ X_s \rangle dW_s,
\eae
which together with the Burkholder-Davis-Gundy inequality and \eqref{second moment bound x} derive that there exists a constant $C$ such that
\bae
\label{uniform pre}
\e_x \sup\limits_{0\le t'\le t}\left| X_{t'} \right|^8 &\le |x|^8 + ct + C \e_x \left( \int_0^t |X_s|^{14} \cdot |\sigma(X_s, \theta_s)|^2 ds\right)^{\frac12}\\
&\le |x|^8 + ct + C \e_x \left( \sup_{0\le t'\le t}|X_{t'}|^8 \cdot \int_0^t |X_s|^6 \cdot |\sigma(X_s, \theta_s)|^2 ds\right)^{\frac12}\\
&\overset{(a)}{\le} |x|^8 + ct + \frac12 \e_x \sup\limits_{0\le t'\le t}\left| X_{t'} \right|^8 + C \int_0^t \e_x \left[ |X_s|^6 + |X_s|^8 \right] ds, 
\eae
where step $(a)$ is by Young's inequality. Thus, combining \eqref{moment bound} and \eqref{uniform pre} we obtain 
$$
\e_x \left( \sup\limits_{0 \le t' \le t} |X_{t'}|^8 \right) = O(t) \quad \text{as}\  t \to \infty,
$$
which derives \eqref{uniform moment bound 1}. Similarly for \eqref{uniform moment bound 2}, using It\^{o}'s formula for $\left| \tilde X_t \right|^8$ and the Burkholder-Davis-Gundy inequality,
\bae
\e_{x,\tilde x} \sup\limits_{0\le t'\le t}\left| \tilde X_{t'} \right|^8 &\le |\tilde x|^8 + ct + C \e_{x, \tilde x}\left( \int_0^t |\tilde X_s|^{14} \cdot \left|\nabla_x \sigma(X_s, \theta_s)\tilde X_s + \nabla_\theta \sigma(X_s, \theta_s) \right|^2 ds\right)^{\frac12}\\
&\le |\tilde x|^8 + ct + \frac12 \e_{x, \tilde x} \sup\limits_{0\le t'\le t}\left| \tilde X_{t'} \right|^8 + C \int_0^t \e_{x, \tilde x}\left[ \left|\tilde X_s \right|^6+ \left|\tilde X_s\right|^8 \right] ds,
\eae
which together with \eqref{moment bound} derive \eqref{uniform moment bound 2}.

\end{proof}

We can now bound the first fluctuation term $\Delta^1_{\tau_k, \sigma_k + \eta}$ in \eqref{error integral} using the estimates from Lemma \ref{poisson eq} and Lemma \ref{moment}.
\begin{lemma}
	\label{fluctuation 1}
	Under Assumptions A\ref{dissipative} - A\ref{lr}, for any fixed $\eta>0$,
	\beq
	\label{errors conv}
	\left|\Delta^1_{\tau_n, \sigma_n + \eta}\right| \rightarrow 0 \text { as } n \rightarrow \infty, \quad \text{a.s.}
	\eeq
\end{lemma}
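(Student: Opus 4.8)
The plan is to rewrite the integrand $Z^1_s$ using the Poisson equation of Lemma \ref{poisson eq}. Write $\Xi_s := (X_s,\tilde X_s,\theta_s)$; by \eqref{error}, \eqref{function} and \eqref{PDE} we have $Z^1_s = G^1(\Xi_s) = \mathcal{L}^{\theta_s}_{x,\tilde x} v^1(\Xi_s)$. Since $\theta_s$ is of finite variation and $v^1$ is a classical solution with the polynomial bounds \eqref{control v1}, It\^o's formula applied to $s \mapsto v^1(\Xi_s)$, together with $\mathcal{L}^{\theta_s}_{x,\tilde x} v^1(\Xi_s) = Z^1_s$, gives
\beq
Z^1_s\,ds = dv^1(\Xi_s) - \nabla_\theta v^1(\Xi_s)\,\dot\theta_s\,ds - dM_s,
\eeq
where $\dot\theta_s = -2\alpha_s(f(\bar X_s)-\beta)(\nabla f(X_s)\tilde X_s)^\top$ and $M_s$ is the martingale whose differential is $\big(\nabla_x v^1(\Xi_s)\sigma(X_s,\theta_s) + \sum_k \nabla_{\tilde x^{:,k}}v^1(\Xi_s)(\nabla_x\sigma(X_s,\theta_s)\tilde X_s^{:,k}+\partial_{\theta_k}\sigma(X_s,\theta_s))\big)\,dW_s$. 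Integrating against the deterministic $C^1$ factor $\alpha_s$ over $[\tau_n,\sigma_n+\eta]$ and integrating by parts in $\alpha_s$,
\bae
\Delta^1_{\tau_n,\sigma_n+\eta} &= \alpha_{\sigma_n+\eta}v^1(\Xi_{\sigma_n+\eta}) - \alpha_{\tau_n}v^1(\Xi_{\tau_n}) - \int_{\tau_n}^{\sigma_n+\eta}\alpha_s' v^1(\Xi_s)\,ds \\
&\quad - \int_{\tau_n}^{\sigma_n+\eta}\alpha_s\,\nabla_\theta v^1(\Xi_s)\,\dot\theta_s\,ds - \int_{\tau_n}^{\sigma_n+\eta}\alpha_s\,dM_s .
\eae
It then suffices to show each of the five terms on the right tends to $0$ a.s. as $n\to\infty$ (recall $\tau_n,\sigma_n\to\infty$ a.s. in the case of infinitely many cycles, which follows from continuity of $\nabla_\theta J(\theta_t)$ and the cycle definitions \eqref{cycle of time}).

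For the two boundary terms, \eqref{control v1} gives $|v^1(\Xi_t)|\le C(1+|X_t|+|\tilde X_t|)$, so it is enough that $\alpha_t(1+|X_t|+|\tilde X_t|)\to 0$ a.s. I would prove this by a Borel--Cantelli argument: using the moment bounds of Lemma \ref{moment} one has $\e\big[\alpha_k^2\sup_{k\le t'\le k+1}(1+|X_{t'}|^2+|\tilde X_{t'}|^2)\big]\le C\alpha_k^2$, and $\sum_k\alpha_k^2\le\int_0^\infty\alpha_s^2\,ds<\infty$ by Assumption (A\ref{lr}); hence $\alpha_k^2\sup_{k\le t'\le k+1}(1+|X_{t'}|^2+|\tilde X_{t'}|^2)\to 0$ a.s.\ along integers $k$, and the monotonicity of $\alpha_t$ upgrades this to all $t\to\infty$. (The $O(\sqrt t)$ running-supremum bounds \eqref{uniform moment bound 1}--\eqref{uniform moment bound 2} combined with the tail condition $\alpha_t^2 t^{1/2+2p}\to 0$ provide an alternative route.) Since $\tau_n,\sigma_n\to\infty$, the boundary terms vanish a.s. For the $\alpha_s'$-integral, again $|v^1(\Xi_s)|\le C(1+|X_s|+|\tilde X_s|)$ and $\e|X_s|+\e|\tilde X_s|\le C$ uniformly in $s$ by Lemma \ref{moment}, so $\e\int_0^\infty|\alpha_s'|(1+|X_s|+|\tilde X_s|)\,ds\le C\int_0^\infty|\alpha_s'|\,ds<\infty$; thus $\int_0^\infty|\alpha_s'|\,|v^1(\Xi_s)|\,ds<\infty$ a.s.\ and its tail over $[\tau_n,\infty)$ vanishes.

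For the fourth term, $\dot\theta_s$ contributes an extra factor $\alpha_s$, and using \eqref{control v1}, \eqref{gradient f bound} and the linear growth of $f$, the integrand is bounded by $C\alpha_s^2(1+|\bar X_s|)(1+|X_s|+|\tilde X_s|)|\tilde X_s|$; by H\"older's inequality and the uniform-in-$s$ moment bounds of Lemma \ref{moment} (which also hold for $\bar X_s$, as it solves the same SDE), $\e\int_0^\infty\alpha_s^2(1+|\bar X_s|)(1+|X_s|+|\tilde X_s|)|\tilde X_s|\,ds\le C\int_0^\infty\alpha_s^2\,ds<\infty$, so this integral is a.s.\ finite and its tail over $[\tau_n,\infty)$ vanishes. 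For the martingale term, put $N_t:=\int_0^t\alpha_s\,dM_s$; by \eqref{control v1}, \eqref{Lip}, \eqref{0 bound} and \eqref{gradient f bound} the diffusion coefficient of $v^1$ is $O\big((1+|X_s|+|\tilde X_s|)^2\big)$, so $\e[\langle N\rangle_\infty]\le C\int_0^\infty\alpha_s^2\,\e\big[(1+|X_s|+|\tilde X_s|)^4\big]\,ds\le C\int_0^\infty\alpha_s^2\,ds<\infty$ by the uniform eighth-moment bounds of Lemma \ref{moment}. Hence $N_t$ is an $L^2$-bounded martingale, converges a.s.\ to some finite $N_\infty$, and $\int_{\tau_n}^{\sigma_n+\eta}\alpha_s\,dM_s = N_{\sigma_n+\eta}-N_{\tau_n}\to N_\infty-N_\infty=0$ a.s.

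The step I expect to be the main obstacle is the a.s.\ decay $\alpha_t(1+|X_t|+|\tilde X_t|)\to 0$: the running moments of $(X_t,\tilde X_t)$ are not bounded uniformly in time (cf.\ \eqref{uniform moment bound 1}--\eqref{uniform moment bound 2}), so one must trade this growth against the decay of $\alpha_t$ via Borel--Cantelli, which is precisely what forces the square-summability of $\alpha$ and the tail condition in Assumption (A\ref{lr}). A secondary but routine point requiring care is the justification of It\^o's formula for the merely classical solution $v^1$; this is legitimate because $v^1\in C^{2,2,1}$ and $v^1$ together with its relevant derivatives satisfy the polynomial growth \eqref{control v1}, which combined with the moment bounds of Lemma \ref{moment} makes all the stochastic and Lebesgue integrals appearing in It\^o's formula well defined.
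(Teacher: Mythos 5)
Your proposal is correct and follows essentially the same route as the paper: the identical Poisson-equation/It\^o representation of $\Delta^1_{\tau_n,\sigma_n+\eta}$ via Lemma \ref{poisson eq}, with the boundary terms handled by a Borel--Cantelli argument trading the decay of $\alpha_t$ against the moment growth of $(X_t,\tilde X_t)$, the $\alpha_s'$ and $\alpha_s^2$ Lebesgue integrals controlled by absolute integrability, and the stochastic integral by $L^2$-bounded-martingale convergence. The only caveat is that your primary Borel--Cantelli variant invokes $\sum_k\alpha_k^2\le\int_0^\infty\alpha_s^2\,ds$ and ``monotonicity of $\alpha_t$'', which Assumption A\ref{lr} does not grant, but the alternative you mention (the running-supremum bounds \eqref{uniform moment bound 1}--\eqref{uniform moment bound 2} combined with $\alpha_t^2 t^{1/2+2p}\to 0$, applied along dyadic times) is exactly the paper's argument, so the gap is immaterial.
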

\begin{proof} 
	We will express $\Delta^i_{\tau_n, \sigma_n + \eta}$ in terms of the Poisson equation in Lemma \ref{poisson eq} and then prove it vanishes as $n$ becomes large. Consider the function 
	$$
	G^1(x,\tilde x, \theta) = (\e_{\pi_\theta} f(Y)- \beta) \left( \nabla f(x)\tilde x - \nabla_\theta \e_{\pi_\theta}f(Y) \right)^\top.
	$$
	By Lemma \ref{poisson eq}, the Poisson equation $ \mathcal{L}^\theta_{x\tilde x} u(x,\tilde x, \theta) = G^1(x, \tilde x, \theta) $ will have a unique smooth solution $v^1(x, \tilde x, \theta)$ that grows at most linearly in $(x, \tilde x)$. Let us apply Itô's formula to the function
	$$
	u^1(t, x, \tilde x, \theta) := \alpha_{t} v^1(x, \tilde x, \theta) \in \mathbb{R}^\ell,
	$$
	evaluated on the stochastic process $(X_t, \tilde X_t, \theta_t)$. Recall that $u_i$ denotes the $i$-th element of $u$ and $\tilde X_t^{:,k} $ be the k-th column of the matrix $\tilde X_t$ for $i,k \in \{1,2,\cdots, \ell\}$. Then,
	\bae
	u^1_i\left(\sigma, X_{\sigma}, \tilde X_\sigma, \theta_{\sigma}\right) =& u^1_i\left(\tau, X_{\tau}, \tilde X_\tau, \theta_{\tau}\right) + \int_{\tau}^{\sigma} \partial_{s} u^1_i\left(s, X_{s}, \tilde X_s, \theta_{s}\right) ds + \int_{\tau}^{\sigma} \mathcal{L}^{\theta_s}_{x\tilde x} u^1_i\left(s, X_{s}, \tilde X_s, \theta_{s}\right) ds \\
	+& \int_{\tau}^{\sigma} \nabla_\theta u^1_i\left(s, X_{s}, \tilde X_s, \theta_{s}\right) d\theta_s  + \int_{\tau}^{\sigma} \nabla_{x} u^1_i\left(s, X_{s}, \tilde X_s, \theta_{s}\right) \sigma(X_s, \theta_s) dW_{s} \\
	+&  \sum_{k=1}^\ell \int_{\tau}^{\sigma} \nabla_{\tilde x^{:,k}} u^1_i\left(s, X_{s}, \tilde X_s, \theta_{s}\right) \left( \nabla_x \sigma(X_s, \theta_s) \tilde X_s^{:,k} + \frac{\partial \sigma(X_s, \theta_s)}{\partial \theta_k} \right)dW_{s}.
	\eae
	Rearranging the previous equation, we obtain the representation
	\bae
	\label{representation 1}
	\Delta^1_{\tau_n,\sigma_n + \eta} =& \int_{\tau_{n}}^{\sigma_{n}+ \eta} \alpha_{s} G^1(X_s, \tilde X_s, \theta_s) ds = \int_{\tau_{k}}^{\sigma_{k}+\eta} \mathcal{L}^{\theta_s}_{x\tilde x} u^1\left(s, X_{s}, \tilde X_s, \theta_{s}\right) ds \\
	=& \alpha_{\sigma_{n}+ \eta} v^1\left(X_{\sigma_{n}+\eta}, \tilde  X_{\sigma_{n}+\eta},  \theta_{\sigma_{n}+\eta}\right)-\alpha_{\tau_{n}} v^1\left(X_{\tau_{n}}, \tilde X_{\tau_{n}}, \theta_{\tau_{n}}\right)-\int_{\tau_{n}}^{\sigma_{n}+\eta} \alpha'_{s} v^1\left(X_{s}, \tilde X_s,  \theta_{s}\right) ds  \\
	+&\int_{\tau_{n}}^{\sigma_{n}+\eta} 2\alpha^2_{s} \nabla_\theta v^1\left(X_{s}, \tilde X_s, \theta_{s}\right) (f(\bar X_s) - \beta) \left( \nabla f(X_s) \tilde X_s\right)^\top ds - \int_{\tau_{n}}^{\sigma_{n}+\eta} \alpha_s \nabla_{x} v^1\left(X_{s}, \tilde X_s, \theta_{s}\right) \sigma(X_s, \theta_s) dW_{s}\\
	-& \sum_{k=1}^\ell \int_{\tau_n}^{\sigma_n +\eta} \alpha_s \nabla_{\tilde x^{:,k}} v^1\left(X_{s}, \tilde X_s, \theta_{s}\right) \left( \nabla_x \sigma(X_s, \theta_s) \tilde X_s^{:,k} + \frac{\partial \sigma(X_s, \theta_s)}{\partial \theta_k} \right)dW_{s}.
	\eae
	The next step is to treat each term on the right hand side of \eqref{representation 1} separately. For this purpose, let us first set
	\beq
	J_{t}^{1,1}=\alpha_{t} \sup _{s \in[0, t]}\left|v^1\left(X_{s}, \tilde X_s, \theta_{s}\right)\right|.
	\eeq
	By \eqref{control v1} and Lemma \ref{moment}, there exists a constant $C$ such that 
	\bae
		\e\left|J_{t}^{1,1}\right|^{2} &\leq C \alpha_{t}^{2} \e \left[1 + \sup _{s \in[0, t]}\left|X_{s}\right|^2 + \sup _{s \in[0, t]}\left|\tilde X_{s}\right|^2 \right]\\
		&= C \alpha_{t}^{2}\left[1+\sqrt{t} \frac{\e \sup _{s \in[0, t]}\left|X_{s}\right|^2 + \e \sup _{s \in[0, t]}\left|\tilde X_{s}\right|^2 }{\sqrt{t}} \right] \\
		&\leq C \alpha_{t}^{2} \sqrt{t}.
	\eae
	Let $p>0$ be the constant in Assumption A\ref{lr} such that $\lim _{t \rightarrow \infty} \alpha_{t}^{2} t^{1 / 2+2 p}=0$ and for any $\delta \in(0, p)$ define the event $A_{t, \delta}=\left\{J_{t}^{1,1} \geq t^{\delta-p}\right\} .$ Then we have for $t$ large enough such that $\alpha_{t}^{2} t^{1 / 2+2 p} \leq 1$
	$$
	\prob \left(A_{t, \delta}\right) \leq \frac{\e\left|J_{t}^{1,1}\right|^{2}}{t^{2(\delta-p)}} \leq C \frac{\alpha_{t}^{2} t^{1 / 2+2 p}}{t^{2 \delta}} \leq C \frac{1}{t^{2 \delta}}.
	$$
	The latter implies that
	$$
	\sum_{m \in \mathbb{N}} \prob \left(A_{2^{m}, \delta}\right)<\infty.
	$$
	Therefore, by the Borel-Cantelli lemma we have that for every $\delta \in(0, p)$ there is a finite positive random variable $d(\omega)$ and some $m_{0}<\infty$ such that for every $m \geq m_{0}$ one has
	$$
	J_{2^{n}}^{1, 1} \leq \frac{d(\omega)}{2^{m(p-\delta)}}.
	$$
	Thus, for $t \in\left[2^{m}, 2^{m+1}\right)$ and $m \geq m_{0}$ one has for some finite constant $C<\infty$
	$$
	J_{t}^{1, 1} \leq C \alpha_{2^{m+1}} \sup _{s \in\left(0,2^{m+1}\right]}\left| v^1\left(X_{s}, \tilde X_s, \theta_{s}\right) \right| \leq C \frac{d(\omega)}{2^{(m+1)(p-\delta)}} \leq C \frac{d(\omega)}{t^{p-\delta}},
	$$
	which proves that for $t \geq 2^{m_{0}}$ with probability one
	\beq
	\label{conv 1}
	J_{t}^{1, 1} \leq C \frac{d(\omega)}{t^{p-\delta}} \rightarrow 0, \text { as } t \rightarrow \infty.
	\eeq
	
	Next we consider the term
	$$
	J_{t, 0}^{1,2} = \int_{0}^{t}\left|\alpha_{s}^{\prime} v^1\left(X_{s}, \tilde X_s, \theta_{s}\right) - 2\alpha^2_{s} \nabla_\theta v^1\left(X_{s}, \tilde X_s, \theta_{s}\right) (f(\bar X_s) - \beta) \left( \nabla f(X_s) \tilde X_s\right)^\top \right| ds.
	$$
	Noting that by the same approach for $X_t$ in Lemma \ref{moment}, we can prove that there exists a constant $C>0$ such that
	\beq
	\label{moment bar}
	\e_{\bar x} |\bar X_t|^4 \le C \left( 1 + |\bar x|^4 \right), \quad \e_{\bar x} \left( \sup\limits_{0 \le t' \le t} |\bar X_{t'}|^2 \right) = O(\sqrt t) \quad \text{as}\  t \to \infty.
	\eeq
	Thus 
	$$
	\begin{aligned}
		\sup _{t>0} \e \left|J_{t, 0}^{1,2}\right| & \overset{(a)}{\le} C \int_{0}^{\infty}\left(\left|\alpha_{s}^{\prime}\right|+\alpha_{s}^{2}\right)\left(1  + \e\left|X_{s}\right|^4 + \e|\tilde X_s|^4 + \e\left|\bar X_{s}\right|^2 \right) ds \\
		& \overset{(b)}{\le} C \int_{0}^{\infty}\left(\left|\alpha_{s}^{\prime}\right|+\alpha_{s}^{2}\right) d s \\
		& \leq C,
	\end{aligned}
	$$
	where step $(a)$ is by Assumption A\ref{f} and \eqref{control v1} and in step $(b)$ we use \eqref{moment bound}. Thus  there is a finite random variable $J_{\infty, 0}^{1, 2}$ such that
	\beq
	\label{conv 2}
	J_{t, 0}^{1, 2} \rightarrow J_{\infty, 0}^{1, 2}, \text{as} \ t \rightarrow \infty \ \text{with probability one}.
	\eeq 
	
	The last term we need to consider is the martingale term
$$
	\begin{aligned}
	J_{t, 0}^{1, 3} 
	&=\int_{0}^{t} \alpha_s  \nabla_{x} v^1\left(X_{s}, \tilde X_s, \theta_{s}\right) \sigma(X_s, \theta_s) dW_{s} \\
	&+ \sum_{k=1}^\ell \int_{0}^{t} \alpha_s \nabla_{\tilde x^{:,k}} v^1\left(X_{s}, \tilde X_s, \theta_{s}\right) \left( \nabla_x \sigma(X_s, \theta_s) \tilde X_s^{:,k} + \frac{\partial \sigma(X_s, \theta_s)}{\partial \theta_k} \right)dW_{s}.
	\end{aligned}
	$$
	By Doob's inequality, Assumption A\ref{lr}, \eqref{control v1}, \eqref{moment bound}, and using calculations similar to the ones for the term $J_{t, 0}^{1, 2}$, we can show that for some finite constant $C<\infty$,
	$$
	\sup _{t>0} \e \left|J_{t, 0}^{1, 3} \right|^{2} \le C \int_{0}^{\infty} \alpha_{s}^{2}\left(1 + \e|X_t|^4 + \e|\tilde X_t|^4 \right) d s<\infty
	$$
	Thus, by Doob's martingale convergence theorem there is a square integrable random variable $J_{\infty, 0}^{1,3}$ such that
	\beq
	\label{conv 3}
	J_{t, 0}^{1, 3} \to J_{\infty, 0}^{1, 3},\quad  \text{as} \ t \to \infty \ \text{ both almost surely and in $L^{2}$}.
	\eeq
	Let us now return to \eqref{representation 1}. Using the terms $J_{t}^{1,1}, J_{t, 0}^{1,2}$, and $J_{t, 0}^{1,3}$ we can write
	$$
	\left|\Delta^1_{\tau_n, \sigma_n + \eta}\right| \leq J_{\sigma_{n}+\eta}^{1,1}+J_{\tau_{n}}^{1,1}+\left|J_{\sigma_{n}+\eta, \tau_{n}}^{1,2}\right| + \left|J_{\sigma_{n}+\eta, \tau_{n}}^{1,3}\right|,
	$$
	which together with \eqref{conv 1}, \eqref{conv 2}, and \eqref{conv 3} prove the statement of the Lemma.
\end{proof}

We will next prove a similar convergence result for $\Delta^2_{\tau_n, \sigma_n + \eta}$. We must first prove an extension of Lemma \ref{poisson eq} for the Poisson equation.
\begin{lemma}
	\label{poisson eq 2}
	Define the error function 
	\beq
	\label{function 2}
	G^2(x,\tilde x, \bar x, \theta) = [ f(\bar x) -  \e_{\pi_{\theta}}f(Y) ] \left( \nabla f(x) \tilde x \right)^\top.
	\eeq
	Under Assumptions (A\ref{dissipative}) - (A\ref{f}), the function 
	\beq
	\label{representation2}
	v^2(x, \tilde x, \bar x, \theta) =  -\int_0^\infty \e G^2(X_t^{\theta,x},\tilde X_t^{\theta, x, \tilde x}, \bar X_t^{\theta, \bar x}, \theta) dt 
	\eeq
	is the classical solution of the Poisson equation 
	\beq
	\label{PDE 2}
	\mathcal{L}^\theta_{x, \tilde x, \bar x} u(x, \tilde x, \bar x, \theta) = G^2(x, \tilde x, \bar x, \theta),
	\eeq
	where $\mathcal{L}^\theta_{x,\tilde x,\bar x}$ is generator of the process $(X_\cdot^{\theta, x}, \tilde X_\cdot^{\theta, x, \tilde x}, \bar X_\cdot^{\theta, \bar x})$, i.e. for any test function $\varphi$
	\beq
	\mathcal{L}^\theta_{x,\tilde x,\bar x} \varphi(x,\tilde x, \bar x) = \mathcal{L}_{x,\tilde x}^\theta \varphi(x, \tilde x, \bar x) + \mathcal{L}_{\bar x}^\theta \varphi(x, \tilde x, \bar x).
	\eeq 
	Furthermore, this solution satisfies the bound 
	\bae
	\label{control v2}
	\left| v^2(x, \tilde x, \bar x, \theta)\right| + \left| \nabla_{\bar x} v^2(x, \tilde x, \bar x, \theta)\right| + \left|\nabla_\theta v^2(x, \tilde x, \bar x, \theta)\right| + \left| \nabla_x v^2(x, \tilde x, \bar x, \theta)\right| + \left| \nabla_{\tilde x} v^2(x, \tilde x, \bar x, \theta)\right| \le C\left(1 + |\bar x|\right)\left( 1 + |\tilde x| \right),
	\eae
	where $C$ is a constant independent of $(x,\tilde x, \bar x, \theta)$.
\end{lemma}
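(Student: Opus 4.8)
The plan is to imitate the proof of Lemma \ref{poisson eq}, exploiting the fact that $\bar X^{\theta,\bar x}_{\cdot}$ is driven by the Brownian motion $\bar W$, which is independent of the Brownian motion $W$ driving $X^{\theta,x}_{\cdot}$ and $\tilde X^{\theta,x,\tilde x}_{\cdot}$. Consequently, for every $t\ge 0$,
\[
\e\, G^2\big(X_t^{\theta,x},\tilde X_t^{\theta,x,\tilde x},\bar X_t^{\theta,\bar x},\theta\big) = \big(\e f(\bar X_t^{\theta,\bar x})-\e_{\pi_\theta}f(Y)\big)\,\e\big(\nabla f(X_t^{\theta,x})\tilde X_t^{\theta,x,\tilde x}\big)^{\top},
\]
so the integrand in \eqref{representation2} factorises into a scalar depending only on $(\bar x,\theta)$ times a vector depending only on $(x,\tilde x,\theta)$. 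By Lemma \ref{ergodic} the scalar factor decays exponentially, $\big|\e f(\bar X_t^{\theta,\bar x})-\e_{\pi_\theta}f(Y)\big|\le Ce^{-\beta t/2}(1+|\bar x|)$, while by Assumption A\ref{f} and the moment bound $\e|\tilde X_t^{\theta,x,\tilde x}|^2\le C(1+|\tilde x|^2)$ (obtained exactly as in Lemma \ref{moment}) the vector factor satisfies $\e\big|\nabla f(X_t^{\theta,x})\tilde X_t^{\theta,x,\tilde x}\big|\le C(1+|\tilde x|)$, with no dependence on $|x|$ since $\nabla f$ is bounded. Multiplying the two and integrating over $t\in[0,\infty)$ shows that $v^2$ is well defined and $|v^2(x,\tilde x,\bar x,\theta)|\le C(1+|\bar x|)(1+|\tilde x|)$ uniformly in $x$ and $\theta$; this already accounts for the product form of the bound \eqref{control v2}.

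For the derivative bounds I would differentiate \eqref{representation2} under the integral sign, which is justified by dominated convergence together with the uniform-in-$\theta$ estimates already available. Differentiation in $\bar x$ acts only on the scalar factor, and $|\nabla_{\bar x}\e f(\bar X_t^{\theta,\bar x})|\le Ce^{-\gamma t}$ by Proposition \ref{ergodic estimation}(iii) (equivalently, via $\e|\nabla_{\bar x}\bar X_t^{\theta,\bar x}|^2\le Ce^{-\beta t}$, the analogue of \eqref{gradient x bound}), which integrates against the bounded vector factor. Differentiation in $x$ or $\tilde x$ acts only on the vector factor, and the decay estimates needed for $\e|\nabla_x X_t^{\theta,x}|^2$, $\e|\nabla_x\tilde X_t^{\theta,x,\tilde x}|^2$, and $\e|\nabla_{\tilde x}\tilde X_t^{\theta,x,\tilde x}|^2$ are precisely \eqref{gradient x bound}, \eqref{tilde x lip}, and \eqref{tilde lip} from the proof of Lemma \ref{poisson eq}; combined with the exponentially decaying scalar factor these give bounds $\le C(1+|\bar x|)(1+|\tilde x|)$ (the $\tilde x$-derivative in fact gains an extra exponential factor).

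The only genuinely involved term is $\nabla_\theta v^2$, because $\theta$ enters through $\e_{\pi_\theta}f(Y)$ and through the coefficients of all three SDEs $X^{\theta,x}_{\cdot},\tilde X^{\theta,x,\tilde x}_{\cdot},\bar X^{\theta,\bar x}_{\cdot}$. By the product rule it splits into two pieces: (i) $\nabla_\theta$ of the scalar factor, which is bounded by $Ce^{-\beta t}(1+|\bar x|)$ via Proposition \ref{ergodic estimation}(i) with $i=1$, multiplied by the bounded vector factor; and (ii) the exponentially small scalar factor multiplied by $\nabla_\theta$ of the vector factor, which is bounded by $C(1+|\tilde x|)$ by exactly the flow estimates already used to bound $\nabla_\theta v^{1,2}$ in Lemma \ref{poisson eq} (chiefly \eqref{tilde theta lip}). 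Summing the pieces gives \eqref{control v2}. Finally, that $v^2$ solves \eqref{PDE 2} in the classical sense follows from the same argument as in Lemma \ref{poisson eq} (and Lemma 3.3 of \cite{wang2022continuous}): the smoothness bounds above place $v^2$ in the domain of $\mathcal{L}^\theta_{x,\tilde x,\bar x}$, and since $\e\big[v^2(X_h^{\theta,x},\tilde X_h^{\theta,x,\tilde x},\bar X_h^{\theta,\bar x},\theta)\big]-v^2(x,\tilde x,\bar x,\theta)=\int_0^h\e\, G^2(X_t^{\theta,x},\tilde X_t^{\theta,x,\tilde x},\bar X_t^{\theta,\bar x},\theta)\,dt$, dividing by $h$ and letting $h\to 0^+$ yields $\mathcal{L}^\theta_{x,\tilde x,\bar x}v^2=G^2$ (using that $\e\, G^2$ evaluated along the flow tends to $0$ as $t\to\infty$). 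I expect the main obstacle to be the bookkeeping in the $\nabla_\theta v^2$ estimate and, more subtly, the justification of interchanging $\nabla_\theta$ with the infinite time integral uniformly in $\theta$; but every estimate this requires has already been produced in the proofs of Lemma \ref{poisson eq} and Proposition \ref{ergodic estimation}, so no genuinely new idea is needed --- the argument is essentially that of Lemma \ref{poisson eq} rerun with one extra, independent factor.
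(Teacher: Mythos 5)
Your proposal is correct and follows essentially the same route as the paper's proof: factor the expectation using the independence of $\bar X^{\theta,\bar x}_\cdot$ from $(X^{\theta,x}_\cdot,\tilde X^{\theta,x,\tilde x}_\cdot)$, bound the scalar factor by the ergodicity estimates and the vector factor by the moment and flow-derivative bounds already established in Lemma \ref{poisson eq} and Proposition \ref{ergodic estimation}, then differentiate under the integral (DCT) and conclude the classical-solution property as in Lemma 3.3 of \cite{wang2022continuous}. The only difference is cosmetic bookkeeping in how the derivative terms are grouped, not a different argument.
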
 

\begin{proof}
	The proof is exactly the same as in Lemma \ref{poisson eq} except for the presence of the dimension $\bar x$ and $\mathcal{L}_{\bar x}$.  Since $X^\theta_t$ and $\bar X^\theta_t$ are i.i.d., the bounds from Proposition \ref{ergodic estimation} are also true for $\bar X_t$. We first show that the integral \eqref{representation2} is finite. Note that
	\begin{eqnarray}
	v^2(x,\tilde x, \bar x, \theta) &=& \int_0^\infty  
	\e\left[ \left(\e_{\pi_{\theta}}f(Y) - f(\bar X_t^{\theta, \bar x}) \right) \cdot \left(\nabla f(X_t^{\theta, x}) \tilde X_t^{\theta, x, \tilde x}\right)^\top \right] dt \notag \\
	&\overset{(a)}{=}& \int_0^\infty  \left( \e_{\pi_{\theta}}f(Y) - \e f(\bar X_t^{\theta, \bar x}) \right) \cdot \e \left[ \nabla f(X_t^{\theta, x}) \tilde X_t^{\theta, x, \tilde x} \right]^\top dt,
	\end{eqnarray}
	where step $(a)$ is due to the independence of $\bar X^{\theta, \bar x}_\cdot$ and $(X^{\theta, x}_\cdot, \tilde X^{\theta, x, \tilde x}_\cdot)$. As in \eqref{tilde ito} and \eqref{tilde dissiptive}, we can prove 
	$$
	\e \left|\tilde X_t^{\theta, x, \tilde x}\right|^2 \le C \left(1+|\tilde x|^2\right) 
	$$
	and thus by Assumption A\ref{f}
	\beq
	\label{expectation bound}
	\left| \e \left[ \nabla f(X_t^{\theta, x}) \tilde X_t^{\theta, x, \tilde x} \right] \right| \le C \e  \left| \tilde X_t^{\theta, x, \tilde x} \right| \le C\left( 1 + |\tilde x|\right),
	\eeq
    which together with Proposition \ref{ergodic estimation} yields
	\begin{eqnarray}
	\left| v^2(x,\tilde x, \bar x, \theta) \right| \le C \left(1 + \left| \tilde x \right| \right) \cdot \int_0^\infty  \left| \e f(\bar X_t^{\theta, \bar x}) - \e_{\pi_{\theta}}f(Y) \right| dt \le C\left(1 + |\tilde x| \right) \left(1 + |\bar x|\right).
	\end{eqnarray}
	
	We next show that $v^2(x,\tilde x, \bar x, \theta)$ is differentiable with respect to $(x, \tilde x, \bar x, \theta)$. Similar to Lemma \ref{poisson eq}, we first change the order of differentiation and integration and show the corresponding integral exists. Then, we apply DCT to prove that the differentiation and integration can be interchanged. For the ergodic process $\bar X_\cdot^\theta$, by Proposition \ref{ergodic estimation} and \eqref{expectation bound}, we have the following bound for $i = 1,2$:
	\beq
	\left|\nabla^i_{\bar x} v^2(x,\tilde x, \bar x, \theta)\right| \le \int_0^\infty \left| \nabla^i_{\bar x} \e f(\bar X_t^{\theta, \bar x})\right| \cdot \left| \e \left[ \nabla f(X_t^{\theta, x}) \tilde X_t^{\theta, x, \tilde x} \right]  \right|dt \le  C\left(1 + |\tilde x| \right).
	\eeq
	Note that 
	\begin{eqnarray}
	\nabla_\theta X_t^\theta = \tilde X_t^{\theta, x, 0}
	\end{eqnarray}
	and thus, as in the proof of Proposition of \ref{ergodic estimation} and Lemma \ref{poisson eq}, it is easy to prove the bounds
	\bae
    \label{x tilde x bound}
    &\sup_{\theta \in \mathbb{R}^\ell, x \in \mathbb{R}^d} \left| \nabla^i_x \tilde X_t^{\theta, x, \tilde x} \right|^2 \le Ce^{-\beta t}, \quad \sup_{\theta \in \mathbb{R}^\ell, x \in \mathbb{R}^d} \left| \nabla^i_{\tilde x} \tilde X_t^{\theta, x, \tilde x} \right|^2 \le Ce^{-\beta t}, \quad i=1,2, \\
    &\sup_{\theta \in \mathbb{R}^\ell, x \in \mathbb{R}^d} \left| \nabla_\theta \tilde X_t^{\theta, x, \tilde x} \right|^2 \le C, \quad \sup_{\theta \in \mathbb{R}^\ell, x \in \mathbb{R}^d} \left| \nabla_x \nabla_{\tilde x} \tilde X_t^{\theta, x, \tilde x} \right|^2 \le Ce^{-\beta t},
	\eae
	which derives 
	\bae
	\label{second term bound}
	\sum_{i=1}^2 \left| \nabla^i_x \e \left[\nabla f(X_t^{\theta, x}) \tilde X_t^{\theta, x, \tilde x} \right] \right| + \left| \nabla_\theta \e \left[\nabla f(X_t^{\theta, x}) \tilde X_t^{\theta, x, \tilde x} \right] \right| &\le C\left(1+\left|\tilde x\right|\right),\\
	\sum_{i=1}^2 \left| \nabla^i_{\tilde x} \e \left[\nabla f(X_t^{\theta, x}) \tilde X_t^{\theta, x, \tilde x} \right] \right| + \left| \nabla_x \nabla_{\tilde x} \e \left[\nabla f(X_t^{\theta, x}) \tilde X_t^{\theta, x, \tilde x} \right] \right| &\le C.
	\eae
	Therefore, for $i = 1,2$,
	\bae
	\left| \nabla_x^i v^2(x, \tilde x, \bar x, \theta) \right| \le \int_0^\infty \left| \e_{\pi_{\theta}}f(Y) - \e f(\bar X_t^{\theta, \bar x}) \right| \cdot \left| \nabla^i_x \e \left[ \nabla f(X_t^\theta) \tilde X_t^{\theta, x, \tilde x}\right] \right| dt &\le C\left(1 + |\bar x| \right) \left(1 + |\tilde x|\right), \\
	\left| \nabla_{\tilde x}^i v^2(x, \tilde x, \bar x, \theta) \right| \le \int_0^\infty \left| \e_{\pi_{\theta}}f(Y) - \e f(\bar X_t^{\theta, \bar x}) \right| \cdot \left| \nabla^i_{\tilde x} \e \left[ \nabla f(X_t^\theta) \tilde X_t^{\theta, x, \tilde x}\right] \right| dt &\le C\left(1 + |\bar x| \right)
 	\eae
     and
	\begin{equation}
	\begin{aligned}
	\left| \nabla_\theta v^2(x, \tilde x, \bar x, \theta) \right| =& \left| \int_0^\infty \nabla_\theta \left( \left[ \e_{\pi_{\theta}}f(Y) - \e f(\bar X_t^{\theta,\bar x}) \right] \cdot \e \left[ \nabla f(X_t^\theta) \tilde X_t^{\theta, x, \tilde x} \right] \right) dt \right| \\
	\le& \left| \int_0^\infty  \left[ \nabla_\theta \e_{\pi_{\theta}}f(Y) - \nabla_\theta \e f(\bar X_t^{\theta,\bar x}) \right] \cdot \e \left[ \nabla f(X_t^\theta) \tilde X_t^{\theta, x, \tilde x} \right]  dt \right|\\
	+& \left| \int_0^\infty  \left[ \e_{\pi_{\theta}}f(Y) - \e f(\bar X_t^{\theta,\bar x}) \right] \cdot \nabla_\theta \e \left[ \nabla f(X_t^\theta) \tilde X_t^{\theta, x, \tilde x} \right] dt \right|\\
	\le& C\left(1 + |\bar x| \right) \left( 1+ |\tilde x|\right).
	\end{aligned}
	\end{equation}
	Finally,
	\beq
	\left| \nabla_x \nabla_{\tilde x}v^2(x,\tilde x, \bar x, \theta) \right| \le
	\int_0^\infty  \left| \e_{\pi_{\theta}}f(Y) - \e f(\bar X_t^{\theta, \bar x}) \right| \cdot \left| \nabla_x \nabla_{\tilde x} \e \left[\nabla f(X_t^{\theta, x}) \tilde X_t^{\theta, x, \tilde x} \right] \right| dt \le C\left(1 + |\bar x| \right).
	\eeq
	By the same calculations as in Lemma 3.3 of \cite{wang2022continuous}, it can be shown that $v^2$ is the classical solution of PDE \eqref{PDE 2} and the bound \eqref{control v2} holds.
\end{proof}

Now we can bound the second fluctuation term $Z_t^2$. The proof is exactly the same as in Lemma \ref{fluctuation 1}.
\begin{lemma}
	\label{fluctuation 2}
	Under Assumptions (A\ref{dissipative}) - (A\ref{lr}), for any fixed $\eta>0$,
	\beq
	\label{errors conv 2}
	\left|\Delta^2_{\tau_n, \sigma_n + \eta}\right| \rightarrow 0, \text { as } n \rightarrow \infty, \quad \text{a.s.}
	\eeq
\end{lemma}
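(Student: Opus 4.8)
The plan is to imitate the proof of Lemma~\ref{fluctuation 1} line by line, using the Poisson equation constructed in Lemma~\ref{poisson eq 2} in place of the one from Lemma~\ref{poisson eq} and carrying the extra state variable $\bar x$ throughout. I would set $u^2(t,x,\tilde x,\bar x,\theta):=\alpha_t v^2(x,\tilde x,\bar x,\theta)\in\mathbb{R}^\ell$ and apply It\^o's formula to $u^2$ along the coupled process $(X_t,\tilde X_t,\bar X_t,\theta_t)$ of \eqref{nonlinear update} over $[\tau_n,\sigma_n+\eta]$. Since $\theta_t$ has no martingale part and $X_t,\bar X_t$ are driven by the independent Brownian motions $W_t,\bar W_t$, the $dt$-part of the expansion is $\alpha'_s v^2\,ds+\alpha_s\mathcal{L}^{\theta_s}_{x,\tilde x,\bar x}v^2\,ds-2\alpha_s^2\nabla_\theta v^2\,(f(\bar X_s)-\beta)(\nabla f(X_s)\tilde X_s)^\top ds$, with no mixed second-order $\theta$-terms, and by \eqref{PDE 2} the middle term integrates to $\Delta^2_{\tau_n,\sigma_n+\eta}$. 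The martingale part consists of the two $W_s$-integrals already appearing in \eqref{representation 1} together with one new $\bar W_s$-integral $\int\alpha_s\nabla_{\bar x}v^2(\cdots)\sigma(\bar X_s,\theta_s)\,d\bar W_s$. Rearranging gives the exact analogue of \eqref{representation 1} for $\Delta^2_{\tau_n,\sigma_n+\eta}$.

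I would then reuse the three building blocks of Lemma~\ref{fluctuation 1}: $J^{2,1}_t:=\alpha_t\sup_{s\in[0,t]}|v^2(X_s,\tilde X_s,\bar X_s,\theta_s)|$; the increasing integral $J^{2,2}_{t,0}$ of $|\alpha'_s v^2|+|2\alpha_s^2\nabla_\theta v^2(f(\bar X_s)-\beta)(\nabla f(X_s)\tilde X_s)^\top|$; and the martingale $J^{2,3}_{t,0}$ gathering the three stochastic integrals. The bound \eqref{control v2} gives $|v^2(x,\tilde x,\bar x,\theta)|^2\le C(1+|\bar x|)^2(1+|\tilde x|)^2\le C(1+|\bar x|^4+|\tilde x|^4)$ and likewise bounds $|\nabla_{\bar x}v^2|,|\nabla_\theta v^2|,|\nabla_x v^2|,|\nabla_{\tilde x}v^2|$ by $C(1+|\bar x|)(1+|\tilde x|)$. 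Using Lemma~\ref{moment} for $X_t,\tilde X_t$, the identical estimates for the $\bar W$-driven copy $\bar X_t$ (which satisfies $\e_{\bar x}|\bar X_t|^8\le C(1+|\bar x|^8)$ and $\e_{\bar x}\sup_{0\le t'\le t}|\bar X_{t'}|^4=O(\sqrt t)$ by the same Burkholder--Davis--Gundy argument as in Lemma~\ref{moment}), and H\"older's inequality to split products such as $\e[|\bar X_s|^2|\tilde X_s|^2|X_s|^2]$, one obtains $\e|J^{2,1}_t|^2\le C\alpha_t^2\sqrt t$, $\sup_{t>0}\e|J^{2,2}_{t,0}|\le C\int_0^\infty(|\alpha'_s|+\alpha_s^2)\,ds<\infty$ (using also Assumption~A\ref{f} and $|f(\bar X_s)-\beta|\le C(1+|\bar X_s|)$), and $\sup_{t>0}\e|J^{2,3}_{t,0}|^2\le C\int_0^\infty\alpha_s^2\,ds<\infty$. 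From here the argument is word-for-word that of Lemma~\ref{fluctuation 1}: a Borel--Cantelli argument along the dyadic subsequence $t=2^m$, using $\lim_{t\to\infty}\alpha_t^2 t^{1/2+2p}=0$ from Assumption~A\ref{lr}, yields $J^{2,1}_t\le Cd(\omega)t^{-(p-\delta)}\to 0$ a.s.; monotone convergence gives a finite a.s. limit for $J^{2,2}_{t,0}$ and Doob's martingale convergence theorem a finite a.s. (and $L^2$) limit for $J^{2,3}_{t,0}$; and since $\tau_n,\sigma_n\uparrow\infty$ (the claim being vacuous otherwise), the decomposition $|\Delta^2_{\tau_n,\sigma_n+\eta}|\le J^{2,1}_{\sigma_n+\eta}+J^{2,1}_{\tau_n}+|J^{2,2}_{\sigma_n+\eta,\tau_n}|+|J^{2,3}_{\sigma_n+\eta,\tau_n}|$ drives the left-hand side to $0$ a.s.

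The only genuinely new bookkeeping relative to Lemma~\ref{fluctuation 1} is that \eqref{control v2} grows multiplicatively in $(\bar x,\tilde x)$ rather than additively, so in each term one must control moments of products (up to degree six in the martingale term) and check that the pointwise bound $|v^2|^2\le C(1+|\bar x|^4+|\tilde x|^4)$ still has supremum-in-time expectation of order $\sqrt t$, so that $\e|J^{2,1}_t|^2=O(\alpha_t^2\sqrt t)$ as required by Assumption~A\ref{lr}. I do not expect a real obstacle here: the extra factor $(1+|\bar x|)$ costs at most two additional powers, which are absorbed by the eighth-moment bounds of Lemma~\ref{moment} and their $\bar X$-analogue, so the proof is, as claimed, essentially identical to that of Lemma~\ref{fluctuation 1}.
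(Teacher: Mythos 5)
Your proposal is correct and follows essentially the same route as the paper: It\^o's formula applied to $\alpha_t v^2$ along $(X_t,\tilde X_t,\bar X_t,\theta_t)$, the analogue of \eqref{representation 1} with the extra $\bar W$-martingale, the three terms $J^{2,1}_t$, $J^{2,2}_{t,0}$, $J^{2,3}_{t,0}$ bounded via \eqref{control v2}, Lemma \ref{moment} and the $\bar X$ moment bounds, followed by Borel--Cantelli on dyadic times and Doob's martingale convergence theorem. Your handling of the multiplicative bound $(1+|\bar x|)(1+|\tilde x|)$ via fourth/eighth moments matches what the paper does implicitly, so no gap remains.
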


\begin{proof}
	Consider the function 
	\beq
	G^2(x,\tilde x, \bar x, \theta) = \left[ f(\bar x) -  \e_{\pi_{\theta}}f(Y)\right] \left(\nabla f(x) \tilde x \right)^\top.
	\eeq
	Let $v^2$ be the solution of \eqref{PDE 2} in Lemma \ref{poisson eq 2}. We apply Itô formula to the function $u^2(t, x, \tilde x, \bar x, \theta)=\alpha_{t} v^2(x, \tilde x, \bar x, \theta)$
	evaluated on the stochastic process $(X_t, \tilde X_t, \bar X_t, \theta_t)$ and get for any $i \in \{1,2,\cdots, \ell\}$ 
	\bae
		&u^2_i\left(\sigma, X_{\sigma}, \tilde X_{\sigma}, \bar X_\sigma,\theta_{\sigma}\right)-u^2_i\left(\tau, X_{\tau}, \tilde X_\tau, \bar X_\tau, \theta_{\tau}\right) \\
		=& \int_{\tau}^{\sigma} \partial_{s} u^2_i\left(s, X_{s}, \tilde X_s, \bar X_s, \theta_{s}\right) ds + \int_{\tau}^{\sigma} \mathcal{L}^{\theta_s}_{x,\tilde x} u^2_i\left(s, X_{s}, \tilde X_s, \bar X_s, \theta_{s}\right) ds + \int_{\tau}^{\sigma} \mathcal{L}^{\theta_s}_{\bar x} u^2_i\left(s, X_{s}, \tilde X_s, \bar X_s, \theta_{s}\right) ds\\
		+& \int_{\tau}^{\sigma} \nabla_{\theta} u^2_i\left(s, X_{s}, \tilde X_s, \bar X_s, \theta_{s}\right) d\theta_s + \int_{\tau}^{\sigma} \nabla_{x} u_i^2\left(s, X_{s}, \tilde X_s, \bar X_s, \theta_{s}\right) dW_{s}  + \int_{\tau}^{\sigma} \nabla_{\bar x} u^2_i\left(s, X_{s}, \tilde X_s, \bar X_s, \theta_{s}\right) d\bar W_{s} \\
		+& \sum_{k=1}^\ell \int_{\tau}^{\sigma} \nabla_{\tilde x^{:,k}} u^2_i\left(s, X_{s}, \tilde X_s, \bar X_s, \theta_{s}\right) \left( \nabla_x \sigma(X_s, \theta_s) \tilde X_s^{:,k} + \frac{\partial \sigma(X_s, \theta_s)}{\partial \theta_k} \right)dW_{s}.
	\eae
	Rearranging the previous equation, we obtain the representation
	\bae
	\label{representation 2}
	&\Delta^2_{\tau_n,\sigma_n + \eta} = \int_{\tau_{n}}^{\sigma_{n}+ \eta} \alpha_{s} G^2( X_s, \tilde X_s, \bar X_s, \theta_s) ds = \int_{\tau_{n}}^{\sigma_{n}+\eta} \mathcal{L}^{\theta_s}_{x, \tilde x, \bar x} u^2\left(s, X_{s}, \tilde X_s, \bar X_s, \theta_{s}\right) ds \\
	=& \alpha_{\sigma_{n}+ \eta} v^2\left( X_{\sigma_{n}+\eta}, \tilde X_{\sigma_n+ \eta}, \bar X_{\sigma_n+ \eta}, \theta_{\sigma_{n}+\eta}\right)-\alpha_{\tau_{n}} v^2\left(X_{\tau_{n}}, \tilde X_{\tau_{n}}, \bar X_{\tau_n}, \theta_{\tau_{n}}\right)-\int_{\tau_{n}}^{\sigma_{n}+\eta} \alpha'_{s} v^2\left(X_{s}, \tilde X_s, \bar X_s, \theta_{s}\right) ds \\
	-& \int_{\tau_{n}}^{\sigma_{n}+\eta} \alpha_s \nabla_{\bar x} v^2\left(X_{s}, \tilde X_s, \bar X_s, \theta_{s}\right) d\bar W_{s} + \int_{\tau_{n}}^{\sigma_{n}+\eta} 2\alpha^2_{s} \nabla_\theta v^2\left(X_{s}, \tilde X_s, \bar X_s, \theta_{s}\right) \left(f(\bar X_s) - \beta\right) \left( \nabla f(X_s) \tilde X_s\right)^\top ds \\
	-& \int_{\tau_{n}}^{\sigma_{n}+\eta} \alpha_s \nabla_{x} v^2\left(X_{s}, \tilde X_s, \bar X_s, \theta_{s}\right) dW_s - \sum_{k=1}^\ell \int_{\tau_n}^{\sigma_n+\eta} \alpha_s \nabla_{\tilde x^{:,k}} v^2\left(X_{s}, \tilde X_s, \bar X_s, \theta_{s}\right) \left( \nabla_x \sigma(X_s, \theta_s) \tilde X_s^{:,k} + \frac{\partial \sigma(X_s, \theta_s)}{\partial \theta_k} \right)dW_{s}.
	\eae
	
	The next step is to treat each term on the right hand side of \eqref{representation 2} separately. 
	For this purpose, let us first set
	\beq
	J_{t}^{2,1}=\alpha_{t} \sup _{s \in[0, t]}\left|v^2\left(X_{s}, \tilde X_s, \bar X_s, \theta_{s}\right)\right|.
	\eeq
	Combining Lemma \ref{moment}, \eqref{control v2} and \eqref{moment bar}, we know that there exists a constant $C$ such that 
	\bae
	\e \left|J_{t}^{2,1}\right|^{2} &\leq C \alpha_{t}^{2} \e\left[1  + \sup _{s \in[0, t]}\left|\tilde X_{s}\right|^{4} + \sup _{s \in[0, t]}\left|\bar X_{s}\right|^{4} \right]\\
	&= C \alpha_{t}^{2}\left[1+\sqrt{t} \frac{ \e \sup _{s \in[0, t]}\left|\tilde X_{s}\right|^{4} + \e \sup _{s \in[0, t]}\left|\bar X_{s}\right|^{4}}{\sqrt{t}}  \right] \\
	&\leq C \alpha_{t}^{2} \sqrt{t}.
	\eae
	Let $p>0$ be the constant in Assumption A\ref{lr} such that $\displaystyle \lim _{t \rightarrow \infty} \alpha_{t}^{2} t^{1 / 2+2 p}=0$ and for any $\delta \in(0, p)$ define the event $A_{t, \delta}=\left\{J_{t}^{2,1} \geq t^{\delta-p}\right\} .$ Then we have for $t$ large enough such that $\alpha_{t}^{2} t^{1 / 2+2 p} \leq 1$ and 
	$$
	\prob \left(A_{t, \delta}\right) \leq \frac{\e\left|J_{t}^{2,1}\right|^{2}}{t^{2(\delta-p)}} \leq C \frac{\alpha_{t}^{2} t^{1 / 2+2 p}}{t^{2 \delta}} \leq C \frac{1}{t^{2 \delta}}.
	$$
	The latter implies that
	$$
	\sum_{m \in \mathbb{N}} \prob\left(A_{2^{m}, \delta}\right)<\infty.
	$$
	Therefore, by the Borel-Cantelli lemma we have that for every $\delta \in(0, p)$ there is a finite positive random variable $d(\omega)$ and some $m_{0}<\infty$ such that for every $n \geq m_{0}$ one has
	$$
	J_{2^{n}}^{2,1} \leq \frac{d(\omega)}{2^{m(p-\delta)}}.
	$$
	Thus for $t \in\left[2^{m}, 2^{m+1}\right)$ and $m \geq m_{0}$ one has for some finite constant $C<\infty$
	$$
	J_{t}^{2,1} \leq C \alpha_{2^{m+1}} \sup _{s \in\left(0,2^{m+1}\right]}\left| v^2\left(X_{s}, \tilde X_s, \bar X_s, \theta_{s}\right) \right| \leq C \frac{d(\omega)}{2^{(m+1)(p-\delta)}} \leq C \frac{d(\omega)}{t^{p-\delta}},
	$$
	which derives that for $t \geq 2^{m_{0}}$ we have with probability one
	\beq
	\label{conv 1 2}
	J_{t}^{2,1} \leq C \frac{d(\omega)}{t^{p-\delta}} \rightarrow 0, \text { as } t \rightarrow \infty.
	\eeq

	Next we consider the term
	$$
	J_{t, 0}^{2,2}=\int_{0}^{t}\left|\alpha_{s}^{\prime} v^2\left(X_{s}, \tilde X_s, \bar X_s, \theta_{s}\right) - 2\alpha^2_{s} \nabla_\theta v^2\left(X_{s}, \tilde X_s, \bar X_s, \theta_{s}\right) \left(f(\bar X_s) - \beta\right) \left( \nabla f(X_s) \tilde X_s \right)^\top \right| ds
	$$
    and thus we see that there exists a constant $0<C<\infty$ such that 
	$$
	\begin{aligned}
		\sup_{t>0} \e \left|J_{t, 0}^{2,2}\right| & \overset{(a)}{\le} C \int_{0}^{\infty}\left(\left|\alpha_{s}^{\prime}\right|+\alpha_{s}^{2}\right)\left(1 + \e\left|\bar X_{s}\right|^{4} + \e\left|\bar X_{s}\right|^{4}\right) ds \\
		& \overset{(b)}{\le} C \int_{0}^{\infty}\left(\left|\alpha_{s}^{\prime}\right|+\alpha_{s}^{2}\right) d s \\
		& \leq C,
	\end{aligned}
	$$
	where in step $(a)$ we use \eqref{control v2} and in step $(b)$ we use Lemma \ref{moment} and \eqref{moment bar}. Thus we know there is a finite random variable $J_{\infty, 0}^{2,2}$ such that
	\beq
	\label{conv 2 2}
	J_{t, 0}^{2,2} \rightarrow J_{\infty, 0}^{2,2}, \ \text{as} \ t \rightarrow \infty \ \text{with probability one}.
	\eeq 
	
	The last term we need to consider is the martingale term
	$$
	\begin{aligned}
	J_{t, 0}^{2,3}=&\int_{0}^{t} \alpha_s  \nabla_{x} v^2\left(X_{s}, \tilde X_s, \bar X_s, \theta_{s}\right) dW_{s} + \int_{0}^{t} \alpha_s \nabla_{\bar x} v^2\left(X_{s}, \tilde X_s, \bar X_s, \theta_{s}\right) d\bar W_{s}\\
	+&\sum_{k=1}^\ell \int_{0}^{t} \alpha_s \nabla_{\tilde x^{:,k}} v^2 \left(X_{s}, \tilde X_s, \bar X_s, \theta_{s}\right) \left( \nabla_x \sigma(X_s, \theta_s) \tilde X_s^{:,k} + \frac{\partial \sigma(X_s, \theta_s)}{\partial \theta_k} \right)dW_{s}.
	\end{aligned}
	$$
	Notice that Doob's inequality and the bounds of \eqref{control v2} (using calculations similar to the ones for the term $J_{t, 0}^{2,2}$ ) give us that for some finite constant $K<\infty$, we have
	$$
	\sup _{t>0} \e\left|J_{t, 0}^{2,3}\right|^{2} \leq K \int_{0}^{\infty} \alpha_{s}^{2} d s<\infty.
	$$
	Thus, by Doob's martingale convergence theorem there is a square integrable random variable $J_{\infty, 0}^{(3)}$ such that
	\beq
	\label{conv 3 2}
	J_{t, 0}^{2,3} \to J_{\infty, 0}^{2,3}, \quad
	\text{as}\  t \to \infty \ \text{both almost surely and in $L^{2}$}.
	\eeq	
	Let us now go back to \eqref{representation 2}. Using the terms $J_{t}^{2,1}, J_{t, 0}^{2,2}$ and $J_{t, 0}^{2,3}$ we can write
	$$
	\left|\Delta^2_{\tau_n, \sigma_n + \eta}\right| \leq J_{\sigma_{n}+\eta}^{2,1}+J_{\tau_{n}}^{2,1}+J_{\sigma_{n}+\eta, \tau_{n}}^{2,2}+\left|J_{\sigma_{k}+\eta, \tau_{n}}^{2,3}\right|,
	$$
	which together with \eqref{conv 1 2}, \eqref{conv 2 2} and \eqref{conv 3 2} prove the statement of the Lemma.
\end{proof}

By \eqref{invariant density}, we know that
\beq
\left| \nabla_\theta J(\theta) \right| = 2 \left| \e_{\pi_\theta} f(Y) - \beta \right| \cdot \left| \nabla_\theta \e_{\pi_\theta} f(Y) \right| \le C.
\eeq
Therefore, the objective function $J(\theta)$ is Lipschitz continuous with respect to $\theta$. The following lemmas are the same as in \cite{wang2022continuous} and thus we omit the proofs.

\begin{lemma}
	\label{estimation f}
    Under Assumptions (A\ref{dissipative})-(A\ref{lr}), choose $\mu>0$ in \eqref{cycle of time} such that for the given $\kappa>0$, one has $3 \mu + \frac{\mu}{8 \kappa}=\frac{1}{2L_{\nabla J}}$, where $L_{\nabla J}$ is the Lipschitz constant of objective function $J$ in \eqref{objective function}. Then for $n$ large enough and $\eta>0$ small enough (potentially random depending on $n$), one has $\int_{\tau_{n}}^{\sigma_n + \eta} \alpha_{s} d s>\mu$. In addition we also have $\frac{\mu}{2} \leq \int_{\tau_{n}}^{\sigma_{n}} \alpha_{s} d s \leq \mu$ with probability one.
\end{lemma}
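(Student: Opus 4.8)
The plan is to show that for all sufficiently large $n$ it is the \emph{integral} constraint in \eqref{cycle of time}, not the gradient--window constraint, that first becomes active at $\sigma_n$; then $\int_{\tau_n}^{\sigma_n}\alpha_s\,ds=\mu$ exactly and every assertion follows at once. Throughout we work on the event that the cycle does not terminate (i.e.\ $\tau_n,\sigma_n<\infty$ for all $n$); the bound $\int_{\tau_n}^{\sigma_n}\alpha_s\,ds\le\mu$ is already part of the definition of $\sigma_n$; and $\mu=\big(2L_{\nabla J}(3+\tfrac1{8\kappa})\big)^{-1}$ is the unique positive root of $3\mu+\tfrac{\mu}{8\kappa}=\tfrac1{2L_{\nabla J}}$, where $L_{\nabla J}<\infty$ is the Lipschitz constant of $\nabla_\theta J$ --- finite because \eqref{invariant density} bounds $\nabla_\theta\e_{\pi_\theta}f(Y)$ together with its first $\theta$--derivative, so that $\nabla_\theta J(\theta)=2(\e_{\pi_\theta}f(Y)-\beta)\nabla_\theta\e_{\pi_\theta}f(Y)$ has a bounded derivative.

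First I would record two facts. (i) $\tau_n\to\infty$ a.s.\ (this is used already in Lemmas \ref{fluctuation 1}--\ref{fluctuation 2}): if instead $\tau_n\uparrow T<\infty$ then $\sigma_{n-1},\sigma_n\to T$ as well, and since $t\mapsto\dot\theta_t$ is a.s.\ bounded on $[0,T]$ by \eqref{nonlinear update}, $\theta_t$ converges and so does $|\nabla_\theta J(\theta_t)|$, which is incompatible with the infinitely many cycle endpoints that \eqref{cycle of time} places arbitrarily close to $T$. (ii) The estimates in Lemmas \ref{fluctuation 1}--\ref{fluctuation 2} are really \emph{supremum} estimates: replacing $\sigma_n+\eta$ by an arbitrary $s\ge\tau_n$ in the representations \eqref{representation 1}, \eqref{representation 2} gives $|\Delta^i_{\tau_n,s}|\le J^{i,1}_s+J^{i,1}_{\tau_n}+J^{i,2}_{s,\tau_n}+|J^{i,3}_{s,\tau_n}|$, and since $J^{i,1}_t\to0$, $J^{i,2}_{t,0}$ increases to a finite limit, and $J^{i,3}_{t,0}$ converges a.s., one gets $\sup_{s\ge\tau_n}\big(|\Delta^1_{\tau_n,s}|+|\Delta^2_{\tau_n,s}|\big)\to0$ a.s. Hence there is a random $n_0$ with $2|\Delta^1_{\tau_n,s}|+2|\Delta^2_{\tau_n,s}|\le\mu/8$ for all $s\ge\tau_n$ once $n\ge n_0$.

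For the core argument I would fix $n\ge n_0$ and let $t^*_n$ be the unique time with $\int_{\tau_n}^{t^*_n}\alpha_s\,ds=\mu$ (finite, by $\int_{\tau_n}^\infty\alpha_s\,ds=\infty$, Assumption A\ref{lr}). By \eqref{gradient with error}, $\theta_s-\theta_{\tau_n}=-\int_{\tau_n}^s\alpha_r\nabla_\theta J(\theta_r)\,dr-2\Delta^1_{\tau_n,s}-2\Delta^2_{\tau_n,s}$, so whenever $|\nabla_\theta J(\theta_r)|\le2|\nabla_\theta J(\theta_{\tau_n})|$ on $[\tau_n,s]$ with $s\le t^*_n$,
\begin{align*}
|\theta_s-\theta_{\tau_n}| &\le 2|\nabla_\theta J(\theta_{\tau_n})|\int_{\tau_n}^s\alpha_r\,dr+\frac{\mu}{8}\le 2\mu\,|\nabla_\theta J(\theta_{\tau_n})|+\frac{\mu}{8}\\
&<\Big(3\mu+\frac{\mu}{8\kappa}\Big)|\nabla_\theta J(\theta_{\tau_n})|=\frac{|\nabla_\theta J(\theta_{\tau_n})|}{2L_{\nabla J}},
\end{align*}
the strict inequality holding because $|\nabla_\theta J(\theta_{\tau_n})|\ge\kappa$ (immediate from the definition of $\tau_n$ and continuity), so that $\big(\mu+\tfrac{\mu}{8\kappa}\big)|\nabla_\theta J(\theta_{\tau_n})|\ge\mu\kappa+\tfrac{\mu}{8}>\tfrac{\mu}{8}$. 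Multiplying by $L_{\nabla J}$ and using the Lipschitz bound for $\nabla_\theta J$ gives $|\nabla_\theta J(\theta_s)-\nabla_\theta J(\theta_{\tau_n})|<\tfrac12|\nabla_\theta J(\theta_{\tau_n})|$, hence $\tfrac12|\nabla_\theta J(\theta_{\tau_n})|<|\nabla_\theta J(\theta_s)|<\tfrac32|\nabla_\theta J(\theta_{\tau_n})|$. Setting $s_0:=\sup\{s\in[\tau_n,t^*_n]:|\nabla_\theta J(\theta_r)|\le2|\nabla_\theta J(\theta_{\tau_n})|\ \forall r\in[\tau_n,s]\}$, the displayed estimate applies on $[\tau_n,s_0]$ and yields $|\nabla_\theta J(\theta_{s_0})|<2|\nabla_\theta J(\theta_{\tau_n})|$, so $s_0<t^*_n$ would contradict the maximality of $s_0$ by continuity; thus $s_0=t^*_n$ and $\tfrac12|\nabla_\theta J(\theta_{\tau_n})|\le|\nabla_\theta J(\theta_s)|\le2|\nabla_\theta J(\theta_{\tau_n})|$ on all of $[\tau_n,t^*_n]$ while $\int_{\tau_n}^{t^*_n}\alpha_s\,ds=\mu$. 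By the definition of $\sigma_n$ as a supremum this forces $\sigma_n\ge t^*_n$, so $\int_{\tau_n}^{\sigma_n}\alpha_s\,ds\ge\mu$, and combined with the reverse inequality, $\int_{\tau_n}^{\sigma_n}\alpha_s\,ds=\mu$. Consequently $\tfrac\mu2\le\int_{\tau_n}^{\sigma_n}\alpha_s\,ds=\mu$, and for every $\eta>0$, $\int_{\tau_n}^{\sigma_n+\eta}\alpha_s\,ds=\mu+\int_{\sigma_n}^{\sigma_n+\eta}\alpha_s\,ds>\mu$.

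The main obstacle is fact (ii): the supremum-in-$s$ control is slightly stronger than the single-endpoint conclusions of Lemmas \ref{fluctuation 1}--\ref{fluctuation 2}, but it is exactly what those proofs produce since the auxiliary processes $J^{i,1}_t,J^{i,2}_{t,0},J^{i,3}_{t,0}$ all converge as $t\to\infty$. The remaining pieces --- $\tau_n\to\infty$ and the continuity argument on $[\tau_n,t^*_n]$ --- are routine, and the role of the precise calibration $3\mu+\tfrac{\mu}{8\kappa}=\tfrac1{2L_{\nabla J}}$ is visible above: the summand $3\mu$ (versus $2\mu$) supplies the slack $\mu\kappa$, and $\tfrac{\mu}{8\kappa}$ is exactly what absorbs the fluctuation budget $\mu/8$ when $|\nabla_\theta J(\theta_{\tau_n})|=\kappa$.
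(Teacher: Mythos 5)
Your proof is correct, but note that the paper itself gives no proof of this lemma --- it is imported from \cite{wang2022continuous}, where the argument (in the Sirignano--Spiliopoulos cycle-of-stopping-times style) runs by contradiction: assuming $\int_{\tau_n}^{\sigma_n+\eta}\alpha_s\,ds\le\mu$, one bounds $|\nabla_\theta J(\theta_s)|$ by roughly $3|\nabla_\theta J(\theta_{\tau_n})|$ for $s$ slightly beyond $\sigma_n$ (this is where the full $3\mu$ in the calibration is spent), applies the Lipschitz bound on $\nabla_\theta J$ together with the fluctuation budget to conclude the gradient window still holds strictly on $[\tau_n,\sigma_n+\eta]$, contradicting the maximality of $\sigma_n$; the lower bound $\int_{\tau_n}^{\sigma_n}\alpha_s\,ds\ge\mu-\eta\,\alpha_{\sigma_n}\ge\mu/2$ then follows for $n$ large. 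You instead argue forward: under the same Lipschitz-plus-fluctuation mechanism, the window constraint cannot bind before the time $t_n^*$ at which the $\alpha$-integral reaches $\mu$, so $\sigma_n=t_n^*$ and $\int_{\tau_n}^{\sigma_n}\alpha_s\,ds=\mu$ exactly (granting the implicit $\alpha_t>0$), which is stronger than the stated $\mu/2$ bound and yields $\int_{\tau_n}^{\sigma_n+\eta}\alpha_s\,ds>\mu$ for every $\eta>0$; your route only consumes a $2\mu$ budget, so the extra slack in the calibration $3\mu+\frac{\mu}{8\kappa}=\frac{1}{2L_{\nabla J}}$ is simply absorbed, whereas the contradiction route needs it to handle times past $\sigma_n$. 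Two of your preparatory observations deserve emphasis because the paper leaves them implicit: $L_{\nabla J}$ must indeed be read as the Lipschitz constant of $\nabla_\theta J$ (finite by \eqref{invariant density} with $i\le 2$), despite the statement's wording; and the fluctuation control must hold at intermediate endpoints, and your remark that the proofs of Lemmas \ref{fluctuation 1}--\ref{fluctuation 2} actually deliver $\sup_{s\ge\tau_n}\left(|\Delta^1_{\tau_n,s}|+|\Delta^2_{\tau_n,s}|\right)\to 0$ a.s.\ (since $J^{i,1}_t\to0$ and $J^{i,2}_{t,0},J^{i,3}_{t,0}$ converge a.s.) is exactly right and is needed in the cited proof as well. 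Your sketch of $\tau_n\to\infty$ is terse but completable (when the integral constraint is slack, the window constraint must bind at $\sigma_n$, putting $|\nabla_\theta J(\theta_{\sigma_n})|$ on the window boundary, which is incompatible with $|\nabla_\theta J(\theta_t)|$ converging to a limit that is at least $\kappa$); this too is tacitly assumed by the paper.
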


\begin{lemma}
	\label{decreasing f}
	Under Assumptions (A\ref{dissipative})-(A\ref{lr}), suppose that there exists an infinite number of intervals $I_n = [\tau_n, \sigma_n)$. Then there is a fixed constant $ \gamma_1 = \gamma_1(\kappa) > 0$ such that for n large enough,
	\beq
	J(\theta_{\sigma_n}) - J( \theta_{\tau_n}) \le -\gamma_1.
	\eeq
\end{lemma}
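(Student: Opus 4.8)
The plan is to integrate the exact identity for $\tfrac{d}{dt}J(\theta_t)$ over the ``bad'' interval $I_n=[\tau_n,\sigma_n)$. Since the $\theta_t$ component of \eqref{nonlinear update} has no Brownian term, $t\mapsto J(\theta_t)$ is an ordinary absolutely continuous function, and inserting the decomposition \eqref{gradient with error} of $\dot\theta_t$ into the steepest-descent direction $-\alpha_t\nabla_\theta J(\theta_t)$ and the two fluctuations $Z^1_t,Z^2_t$ of \eqref{error} gives
\[
J(\theta_{\sigma_n})-J(\theta_{\tau_n})=-\int_{\tau_n}^{\sigma_n}\alpha_s|\nabla_\theta J(\theta_s)|^2\,ds-2\sum_{i=1}^{2}\int_{\tau_n}^{\sigma_n}\alpha_s\,\nabla_\theta J(\theta_s)\cdot Z^i_s\,ds.
\]
I will show the first integral is bounded below by a fixed positive constant while the two fluctuation integrals vanish as $n\to\infty$; the difference then yields the asserted uniform decrease.

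For the main term, the definition of $\sigma_n$ in \eqref{cycle of time} forces $|\nabla_\theta J(\theta_s)|\ge\tfrac12|\nabla_\theta J(\theta_{\tau_n})|\ge\tfrac{\kappa}{2}$ for every $s\in I_n$ (using $|\nabla_\theta J(\theta_{\tau_n})|\ge\kappa$ from the definition of $\tau_n$), so the integrand is at least $\kappa^2/4$ there. Combined with the lower bound $\int_{\tau_n}^{\sigma_n}\alpha_s\,ds\ge\mu/2$ from Lemma \ref{estimation f}, this gives $\int_{\tau_n}^{\sigma_n}\alpha_s|\nabla_\theta J(\theta_s)|^2\,ds\ge\kappa^2\mu/8$ for all large $n$.

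For the fluctuation integrals, the key point is that $\nabla_\theta J(\theta_s)\cdot Z^i_s=\nabla_\theta J(\theta_s)\cdot G^i(X_s,\tilde X_s,(\bar X_s),\theta_s)$, i.e. each of these integrals has exactly the structure of $\Delta^i_{\tau_n,\sigma_n}$ analyzed in Lemmas \ref{fluctuation 1}--\ref{fluctuation 2}, but with scalar error function $\nabla_\theta J(\theta)\cdot G^i$ in place of $G^i$. Because $J(\theta)=(\e_{\pi_\theta}f(Y)-\beta)^2$, part (\romannumeral2) of Proposition \ref{ergodic estimation} gives $|\nabla_\theta J(\theta)|+|\nabla^2_\theta J(\theta)|\le C$ uniformly; hence $\nabla_\theta J(\theta)\,v^i(x,\tilde x,(\bar x),\theta)$ is again a classical solution of the Poisson equation (the generator $\mathcal{L}^\theta$ does not act on $\theta$) and it inherits the polynomial growth bounds \eqref{control v1}/\eqref{control v2} of $v^i$. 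Running the argument of Lemma \ref{fluctuation 1} (resp. Lemma \ref{fluctuation 2}) verbatim with $v^i$ replaced by $\nabla_\theta J(\theta)v^i$ yields $\int_{\tau_n}^{\sigma_n+\eta}\alpha_s\nabla_\theta J(\theta_s)\cdot Z^i_s\,ds\to0$ a.s.; since every term of that It\^o representation converges uniformly in the upper limit, one in fact gets $\sup_{\tau_n\le t\le\sigma_n+\eta}\big|\int_{\tau_n}^{t}\alpha_s\nabla_\theta J(\theta_s)\cdot Z^i_s\,ds\big|\to0$, and in particular $\int_{\tau_n}^{\sigma_n}\alpha_s\nabla_\theta J(\theta_s)\cdot Z^i_s\,ds\to0$. (Under the hypothesis of infinitely many intervals, $\tau_n\to\infty$, because $\int_{\tau_k}^{\sigma_k}\alpha_s\,ds\ge\mu/2$ for large $k$ and $\int_0^\infty\alpha_s\,ds=\infty$, so this limit is meaningful.) Combining the three pieces, $J(\theta_{\sigma_n})-J(\theta_{\tau_n})\le-\kappa^2\mu/8+o(1)$, and for $n$ large this is at most $-\gamma_1$ with $\gamma_1:=\kappa^2\mu(\kappa)/16>0$.

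The \emph{main obstacle} is the fluctuation term. A direct integration by parts in $\theta$-space, writing $\int_{\tau_n}^{\sigma_n}\alpha_s\nabla_\theta J(\theta_s)\cdot Z^i_s\,ds=\nabla_\theta J(\theta_{\sigma_n})\cdot\Delta^i_{\tau_n,\sigma_n}-\int_{\tau_n}^{\sigma_n}(\nabla^2_\theta J(\theta_s)\dot\theta_s)\cdot\Delta^i_{\tau_n,s}\,ds$, does not close: the remaining factor $\int_{\tau_n}^{\sigma_n}|\dot\theta_s|\,ds\le C\int_{\tau_n}^{\sigma_n}\alpha_s(1+|\bar X_s|)(1+|\tilde X_s|)\,ds$ is not a priori bounded, since only one power of $\alpha_s$ appears and $\int_0^\infty\alpha_s\,ds=\infty$. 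The remedy is precisely to absorb $\nabla_\theta J(\theta)$ into the error function and re-use the Poisson-equation representation of Lemmas \ref{poisson eq}--\ref{poisson eq 2}: applying It\^o's formula to $\alpha_t\nabla_\theta J(\theta_t)v^i(\cdot)$ replaces $\int\alpha_s(\cdots)\,ds$ by boundary terms, a $\int\alpha_s^2(\cdots)\,ds$ term (the extra power of $\alpha_s$ arising from $\dot\theta_s=O(\alpha_s)$), and a martingale, all controlled by the $v^i$ bounds, the moment bounds of Lemma \ref{moment}, and Assumption A\ref{lr}. That this reduction is available is exactly why the polynomial bounds on $v^1,v^2$ were established earlier, and it makes Lemma \ref{decreasing f} follow by the same mechanism as in \cite{wang2022continuous}.
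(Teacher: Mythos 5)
Your proposal is correct and follows essentially the same route as the proof the paper defers to \cite{wang2022continuous} (and \cite{sirignano2017stochastic}): the chain-rule decomposition of $J(\theta_{\sigma_n})-J(\theta_{\tau_n})$, the lower bound $\tfrac{\kappa^2}{4}\cdot\tfrac{\mu}{2}$ on the steepest-descent integral via the stopping-time definitions and Lemma \ref{estimation f}, and the vanishing of the noise integrals via the Poisson-equation/It\^o representation. Your observation that the factor $\nabla_\theta J(\theta_s)$ must be absorbed into the Poisson solution (using $|\nabla_\theta J|,|\nabla^2_\theta J|\le C$ from Proposition \ref{ergodic estimation}(\romannumeral2), since the generator does not act on $\theta$) rather than bounded crudely or handled by integration by parts is exactly the right fix, and is how the cited argument closes, where the scalar error function carries the $\nabla_\theta J$ factor from the start.
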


\begin{lemma}
	\label{increasing f}
	Under Assumptions (A\ref{dissipative})-(A\ref{lr}), suppose that there exists an infinite number of intervals $ I_n = [\tau_n, \sigma_n)$. Then, there is a fixed constant $\gamma_2 < \gamma_1$ such that for $n$ large enough,
	\beq
	J(\theta_{\tau_n}) - J(\theta_{\sigma_{n-1}}) \le \gamma_2.
	\eeq
\end{lemma}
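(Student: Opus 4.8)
The plan is to reduce the increase of $J$ over the small-gradient interval $J_n=[\sigma_{n-1},\tau_n)$ to exactly the weighted fluctuation integrals already controlled in Lemmas \ref{fluctuation 1} and \ref{fluctuation 2}, and then to exploit that those fluctuations vanish (not merely stay bounded) so that the increase can be made smaller than the fixed decrease $\gamma_1$ from Lemma \ref{decreasing f}. Since $t\mapsto\theta_t$ is absolutely continuous (it solves the ODE in \eqref{nonlinear update}) and $J$ is $C^2$ with $|\nabla_\theta J|,|\nabla_\theta^2 J|\le C$ by \eqref{invariant density}, the fundamental theorem of calculus applies to $t\mapsto J(\theta_t)$. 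Using the decomposition \eqref{gradient with error}, i.e. $\dot\theta_t=-\alpha_t\nabla_\theta J(\theta_t)-2\alpha_t(Z_t^1+Z_t^2)$ with $Z_t^i$ as in \eqref{error}, I would first write
\[
J(\theta_{\tau_n})-J(\theta_{\sigma_{n-1}})=-\int_{\sigma_{n-1}}^{\tau_n}\alpha_s\left|\nabla_\theta J(\theta_s)\right|^2 ds-2\sum_{i=1}^2\int_{\sigma_{n-1}}^{\tau_n}\alpha_s\langle\nabla_\theta J(\theta_s),Z_s^i\rangle\, ds .
\]
The first (steepest-descent) term is nonpositive and may simply be dropped, since only an upper bound on the increase is needed; it therefore suffices to prove that each weighted fluctuation integral $\int_{\sigma_{n-1}}^{\tau_n}\alpha_s\langle\nabla_\theta J(\theta_s),Z_s^i\rangle\,ds$ tends to $0$ almost surely as $n\to\infty$.

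The device that makes this work is to absorb the weight $\nabla_\theta J(\theta)$ into the Poisson solutions $v^1,v^2$ from Lemmas \ref{poisson eq} and \ref{poisson eq 2}. Define the scalar functions $w^i:=\langle\nabla_\theta J(\theta),v^i\rangle$. Because $\nabla_\theta J(\theta)$ does not depend on the variables on which $\mathcal{L}^\theta$ acts, one has $\mathcal{L}^\theta w^i=\langle\nabla_\theta J(\theta),\mathcal{L}^\theta v^i\rangle=\langle\nabla_\theta J(\theta),G^i\rangle$, so $w^i$ is the classical solution of the Poisson equation with source $\langle\nabla_\theta J(\theta),G^i\rangle$, and along the trajectory $\langle\nabla_\theta J(\theta_s),Z_s^i\rangle=\mathcal{L}^{\theta_s}w^i$. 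Differentiating the product and invoking $|\nabla_\theta J|,|\nabla_\theta^2 J|\le C$, the function $w^i$ inherits the polynomial bounds \eqref{control v1} and \eqref{control v2}: namely $|w^1|+|\nabla_\theta w^1|+|\nabla_x w^1|+|\nabla_{\tilde x}w^1|\le C(1+|x|+|\tilde x|)$, and the $C(1+|\bar x|)(1+|\tilde x|)$ bound for $w^2$ (now a scalar).

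With $w^i$ in hand, I would apply It\^o's formula to $\alpha_t w^i$ evaluated on $(X_t,\tilde X_t,\theta_t)$ for $i=1$ and on $(X_t,\tilde X_t,\bar X_t,\theta_t)$ for $i=2$, on the interval $[\sigma_{n-1},\tau_n]$, obtaining a representation of $\int_{\sigma_{n-1}}^{\tau_n}\alpha_s\langle\nabla_\theta J(\theta_s),Z_s^i\rangle\,ds$ identical in structure to \eqref{representation 1} and \eqref{representation 2}: a boundary term $\alpha_{\tau_n}w^i-\alpha_{\sigma_{n-1}}w^i$, a finite-variation integral in $\alpha_s'$ and $\alpha_s^2\nabla_\theta w^i$, and martingale integrals. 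The estimates on each piece are verbatim those in Lemmas \ref{fluctuation 1} and \ref{fluctuation 2}: the polynomial growth of $w^i$ combined with the moment bounds of Lemma \ref{moment} (and \eqref{moment bar}) drives the identical Borel--Cantelli argument for the boundary term (using $\lim_t\alpha_t^2 t^{1/2+2p}=0$ from A\ref{lr}) and Doob's martingale convergence theorem for the stochastic integrals. Because infinitely many cycles exist, Lemma \ref{estimation f} forces $\sigma_{n-1},\tau_n\to\infty$, so the boundary terms vanish and the convergent finite-variation and martingale parts cancel in the difference, giving $\int_{\sigma_{n-1}}^{\tau_n}\alpha_s\langle\nabla_\theta J(\theta_s),Z_s^i\rangle\,ds\to 0$ a.s.

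Combining these steps yields $J(\theta_{\tau_n})-J(\theta_{\sigma_{n-1}})\le\varepsilon_n$ for some $\varepsilon_n\to 0$ a.s. Since $\gamma_1>0$ is a fixed constant, I may fix any deterministic $\gamma_2\in(0,\gamma_1)$; then for all $n$ beyond a (random) threshold one has $J(\theta_{\tau_n})-J(\theta_{\sigma_{n-1}})\le\gamma_2<\gamma_1$, which is the assertion. The main obstacle, and the only point genuinely beyond re-invoking the earlier lemmas, is the extra weight $\nabla_\theta J(\theta_s)$ inside the fluctuation integral; it is resolved by the observation that $\langle\nabla_\theta J(\theta),v^i\rangle$ again solves a Poisson equation and again satisfies the polynomial bounds, which is precisely where the uniform bounds on $\nabla_\theta J$ and $\nabla_\theta^2 J$ from \eqref{invariant density} are indispensable.
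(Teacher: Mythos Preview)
Your argument is correct and is essentially the approach the paper intends: the proof is deferred to \cite{wang2022continuous}, where the same Poisson-equation machinery (Lemmas \ref{poisson eq}, \ref{poisson eq 2} and the It\^o representation used in Lemmas \ref{fluctuation 1}, \ref{fluctuation 2}) is applied over $[\sigma_{n-1},\tau_n]$ with the extra factor $\nabla_\theta J(\theta)$ absorbed into the test function, exactly as you do via $w^i=\langle\nabla_\theta J(\theta),v^i\rangle$. Your observation that the bounds \eqref{invariant density} on $\nabla_\theta J$ and $\nabla_\theta^2 J$ are what make $w^i$ inherit the polynomial estimates \eqref{control v1}--\eqref{control v2} is precisely the point.
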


\begin{proof}[Proof of Theorem \ref{conv}:]
Recalling \eqref{cycle of time}, we know $\tau_n$ is the first time $|\nabla_\theta J(\theta_t)|> \kappa$ when $t > \sigma_{n-1}$. Thus, for any fixed $\kappa>0$, if there are only a finite number of $\tau_{n}$, then there is a finite $T^{*}$ such that $\left|\nabla_\theta J(\theta_t)\right| \le \kappa$ for $t \ge T^{*}$. We now use a ``proof by contradiction". Suppose that there are infinitely many instances of $\tau_{n}$. By Lemmas \ref{decreasing f} and \ref{increasing f}, we have for sufficiently large $n$ that
	$$
	\begin{aligned}
    	&J\left(\theta_{\sigma_{n}}\right)-J\left(\theta_{\tau_{n}}\right) \le-\gamma_{1} \\
		&J\left(\theta_{\tau_{n}}\right)-J\left(\theta_{\sigma_{n-1}}\right) \le \gamma_{2},
	\end{aligned}
	$$
	where $0<\gamma_{2}<\gamma_{1}$. Choose $N$ large enough so that the above relations hold simultaneously for $n \geq N$. Then,
	\begin{eqnarray}
	J\left(\theta_{\tau_{m+1}}\right) - J\left(\theta_{\tau_{N}}\right) &=& \sum_{n=N}^{m}\left[J\left(\theta_{\sigma_{n}}\right) - J\left(\theta_{\tau_{n}}\right) + J\left(\theta_{\tau_{n+1}}\right) - J\left(\theta_{\sigma_{n}}\right)\right] \notag \\
	&\le& \sum_{k=N}^{n}\left(-\gamma_{1}+\gamma_{2}\right) \notag \\
	&<& (m - N) \times \left(-\gamma_{1}+\gamma_{2}\right).
	\end{eqnarray}
	Letting $m \rightarrow \infty$, we observe that $J\left(\theta_{\tau_{m}}\right) \rightarrow -\infty$, which is a contradiction, since by definition $J(\theta_t) \ge 0$. Thus, there can be at most finitely many $\tau_{n}$. Thus, there exists a finite time $T$ such that almost surely $|\nabla_\theta J(\theta_t)| < \kappa$ for $t \ge T$. Since $\kappa$ is arbitrarily chosen, we have proven that $|\nabla_\theta J(\theta_t)| \to 0$ as $t \to \infty$ almost surely.

\end{proof}

\section*{Acknowledgement}

This research has been supported by the EPSRC Centre for Doctoral Training in Mathematics of Random Systems: Analysis, Modelling and Simulation (EP/S023925/1).

\section*{Appendix}

\appendix
\renewcommand{\appendixname}{Appendix~\Alph{section}}

\section{Proof of Proposition \ref{ergodic estimation}} \label{detailed ergodic}

\begin{itshape}Proof of (\romannumeral1).\end{itshape} 
\eqref{theta decay} with $i=0$ holds from Lemma \ref{ergodic}. And for $i=1$, define 
$$
\hat f(t, x,\theta) = \e f(X_t^{\theta, x})\quad \text{and} \quad \tilde f_{t_0}(t, x, \theta) = \hat f(t,x, \theta) - \hat f(t+t_0, x, \theta)
$$
and we have 
$$
\lim_{t_0 \to \infty} \tilde f_{t_0}(t, x, \theta) = \e f(X_t^{\theta, x}) - \e_{\pi_\theta}f(Y) 
$$
Note that by markov property of ${X^\theta_\cdot}$, we have 
\beq
\tilde f_{t_0}(t,x,\theta) = \hat f(t,x, \theta) - \e f(X_{t+t_0}^{\theta, x}) = \hat f(t,x, \theta) - \e \left[ \e\left[ f(X_{t+t_0}^{\theta, x})\Big| \mathscr{F}_{t_0} \right] \right] = \hat f(t,x, \theta) - \e \left[ \hat f(t, X_{t_0}^{\theta, x}, \theta) \right] 
\eeq
Then we obtain 
\beq
\label{gradient theta}
\nabla_\theta \tilde f_{t_0}(t,x,\theta) = \nabla_\theta \hat f(t,x, \theta) - \e \left[ \nabla_\theta \hat f(t, X_{t_0}^{\theta, x}, \theta) \right] - \e \left[ \nabla_x \hat f(t, X_{t_0}^{\theta, x}, \theta) \nabla_\theta X_{t_0}^{\theta, x} \right].
\eeq
We need the following statement
\begin{itemize}
    \item For any $t \ge 0, x \in \mathbb{R}^{d}, \theta \in \mathbb{R}^{\ell}$
    \beq
    \label{partial x}
    \left|\nabla_{x} \hat{f}(t, x, \theta)\right| \le C e^{-\frac{\beta t}{2}} .
    \eeq
    \item There exist $\eta>0$ such that for any $t \ge 0, \theta \in \mathbb{R}^{\ell}, x_1, x_2 \in \mathbb{R}^{d}$
    \beq
    \label{partial theta}
    \left|\nabla_\theta \hat{f}\left(t, x_1, \theta \right) - \nabla_\theta \hat{f}\left(t, x_2, \theta \right)\right| \le C e^{-\frac\beta4 t}\left|x_{1}-x_{2}\right| .
    \eeq
\end{itemize}

For the first statement, we have by Lemma \ref{difference lemma}
$$
\left|\hat{f} \left(t, x_1, \theta \right)- \hat{f}\left(t, x_2, \theta \right)\right| =\left|\e f( X_{t}^{\theta, x_1} )- \e f(X_{t}^{\theta, x_2})\right| \le C \e \left| X_{t}^{\theta, x_1} - X_{t}^{\theta, x_2} \right|  \le C e^{-\frac\beta2 t}\left|x_{1}-x_{2}\right|,
$$
which implies \eqref{partial x}. Then for the second statement, assumptions A\ref{dissipative} and A\ref{first derivative} imply that $X_t^{\theta, x}$ is differentiable w.r.t $\theta$ and its derivative $\nabla_{\theta} X_t^{\theta, x}$ satisfies 

\begin{equation}
    \label{gradient theta sde}
	\left\{
	\begin{aligned}
		d \nabla_{\theta} X_t^{\theta, x} &= \left[ \nabla_{x} \mu(X_t^{\theta, x}, \theta) \nabla_\theta X_t^{\theta, x} + \nabla_\theta \mu(X_t^{\theta, x}, \theta)\right] dt + \left[ \nabla_{x} \sigma(X_t^{\theta, x}, \theta) \nabla_\theta X_t^{\theta, x} + \nabla_\theta \sigma(X_t^{\theta, x}, \theta) \right]dW_t, \\
        \nabla_{\theta} X_0^{\theta, x}&= 0.
	\end{aligned}
	\right.
\end{equation}
where the SDE in \eqref{gradient theta sde} can be written explicitly as \bae
d \frac{\partial X_t^{\theta,x,m}}{\partial \theta_n} &= \left[ \nabla_x \mu_m(X_t^{\theta, x}, \theta) \frac{\partial X_t^{\theta,x}}{\partial \theta_n} + \frac{\partial \mu_n(X_t^{\theta,x}, \theta)}{\partial \theta_n} \right] dt + \sum_{k=1}^d \left[ \nabla_x \sigma_{mk}(X_t^{\theta, x}, \theta) \frac{\partial X_t^{\theta,x}}{\partial \theta_n} + \frac{\partial \sigma_{mk}(X_t^{\theta,x}, \theta)}{\partial \theta_n} \right] dW_t^k
\eae
for $m \in \{1,2,\cdots, d\}$ and $n \in \{1,2,\cdots, \ell\}$. Due to the dissipativity of $\mu$, for any $x, y$ and $t>0$, we have that 
\beq
\left\langle \mu(x+ty, \theta) - \mu(x, \theta),\ ty \right\rangle + \frac32 \left| \sigma(x+ty, \theta) - \sigma(x, \theta) \right|^2 \le -\beta t^2|y|^2.
\eeq
Therefore,
\bae
&\frac{1}{t^2} \left\langle \int_0^t \frac{d}{ds} \mu(x+sy, \theta) ds,\ ty \right\rangle + \frac32 \left| \frac1t \int_0^t \frac{d}{ds} \sigma(x+sy, \theta) ds \right|^2 \\
=& \left\langle \frac1t \int_0^t \mu_x(x+sy, \theta)y ds,\ y \right\rangle + \frac32 \left|\frac1t \int_0^t \sigma_x(x+sy, \theta)y ds \right|^2 \\
\le& -\beta|y|^2.
\eae
Taking the limit $t\to 0^+$ yields
\beq
\left\langle \mu_x(x, \theta)y, \, y\right\rangle + \frac32 |\sigma_x(x, \theta) y |^2 \le -\beta |y|^2,\quad \forall x, y \in \mathbb{R}^d.
\label{MuDerivInequality}
\eeq

Combining \eqref{gradient theta sde} and \eqref{MuDerivInequality}, for any $k \in \{1,2,\cdots, \ell\}$
\bae
\label{grad theta cal}
&\frac{d}{dt} \e \left| \frac{\partial X_t^{\theta, x}}{\partial \theta_k} \right|^2 \\
=& \e \left[ 2\left\langle \nabla_{x} \mu(X_t^{\theta, x}, \theta) \frac{\partial X_t^{\theta, x}}{\partial \theta_k} + \frac{\partial \mu(X_t^{\theta, x}, \theta)}{\partial \theta_k},\ \frac{\partial X_t^{\theta, x}}{\partial \theta_k} \right\rangle + \left| \nabla_{x} \sigma(X_t^{\theta, x}, \theta) \frac{\partial X_t^{\theta, x}}{\partial \theta_k} + \frac{\partial  \sigma(X_t^{\theta, x}, \theta)}{\partial \theta_k} \right|^2 \right] \\
=& \e \left[ 2\left\langle \nabla_{x} \mu(X_t^{\theta, x}, \theta) \frac{\partial X_t^{\theta, x}}{\partial \theta_k},\ \frac{\partial X_t^{\theta, x}}{\partial \theta_k} \right\rangle + 2\left| \nabla_{x} \sigma(X_t^{\theta, x}, \theta) \frac{\partial X_t^{\theta, x}}{\partial \theta_k} \right|^2 +  2\left\langle \frac{\partial\mu(X_t^{\theta, x}, \theta)}{\partial \theta_k},\ \frac{\partial X_t^{\theta, x}}{\partial \theta_k} \right\rangle + 2\left| \frac{\partial \sigma(X_t^{\theta, x}, \theta)}{\partial \theta_k} \right|^2 \right]\\
\le& -2\beta \e \left| \frac{\partial X_t^{\theta, x}}{\partial \theta_k} \right|^2 + \beta \e\left| \frac{\partial X_t^{\theta, x}}{\partial \theta_k} \right|^2 + \left( \frac1\beta \e \left| \frac{\partial \mu(X_t^{\theta, x}, \theta)}{\partial \theta_k}\right|^2 + 2\e\left| \frac{\partial \sigma(X_t^{\theta, x}, \theta)}{\partial \theta_k} \right|^2  \right)\\
\overset{(a)}{\le}& -\beta \e \left| \frac{\partial X_t^{\theta, x}}{\partial \theta_k} \right|^2 + C,
\eae
where step $(a)$ is by the uniform boundedness of $\nabla_\theta \mu, \nabla_\theta \sigma$ from \eqref{Lip}. Thus 
\beq
\label{grad theta bound}
\sup_{\theta \in \mathbb{R}^\ell, x \in \mathbb{R}^d} \left| \nabla_\theta X_t^{\theta, x} \right|^2 \le C, \quad \forall t \ge 0.
\eeq
Then
\bae
&\left|\nabla_\theta \hat{f}\left(t, x_1, \theta \right) - \nabla_\theta \hat{f}\left(t, x_2, \theta \right)\right|\\
=& \left|\nabla_\theta \e f(X_t^{\theta, x_1}) - \nabla_\theta \e f(X_t^{\theta, x_2}) \right| \\
=& \left| \e \left[ \nabla f(X_t^{\theta, x_1}) \nabla_\theta X_t^{\theta, x_1} \right] - \e \left[ \nabla f(X_t^{\theta, x_2}) \nabla_\theta X_t^{\theta, x_2} \right] \right| \\
\le& \left| \e  \left[ \nabla f(X_t^{\theta, x_1}) \nabla_\theta X_t^{\theta, x_1} \right] - \e \left[ \nabla f(X_t^{\theta, x_2}) \nabla_\theta X_t^{\theta, x_1}\right] \right| + \left| \e \left[ \nabla f(X_t^{\theta, x_2}) \nabla_\theta X_t^{\theta, x_1}\right] - \e \left[ \nabla f(X_t^{\theta, x_2}) \nabla_\theta X_t^{\theta, x_2} \right] \right|\\
:=& I_1 + I_2
\eae
For the terms $I_{1}$, it follows from Assumption A\ref{f}, Lemma \ref{difference lemma} and \eqref{grad theta bound} that 
\bae
\label{diff 1}
I_{1} \le C \left[ \e\left| X_t^{\theta, x_1} - X_t^{\theta, x_2}\right|^{2}\right]^{\frac12} \cdot \left[ \e \left| \nabla_\theta X_t^{\theta, x_1}\right|\right] ^{\frac12} \le C e^{-\frac\beta2 t} \left|x_{1}-x_{2}\right|.
\eae
Then for $I_2$, by the same calculation in Lemma \ref{difference lemma} we have 
\beq
\label{grad theta lip}
\e \left| \nabla_\theta X_t^{\theta, x_1} - \nabla_\theta X_t^{\theta, x_2} \right|^2 \le e^{-\frac\beta2 t} \left| x_1 - x_2 \right|^{2}.
\eeq
Actually, define $Y_t = \nabla_\theta X_t^{\theta, x_1} - \nabla_\theta X_t^{\theta, x_2}$ and Let $Y_t^{:,k}$ denote its k-th column, i.e. 
$$
Y_t^{:,k} := \frac{\partial X_t^{\theta,x_1}}{\partial \theta_k} - \frac{\partial X_t^{\theta,x_2}}{\partial \theta_k}
$$
Then from \eqref{gradient theta sde} we have for any $k \in \{1,2,\cdots, \ell\}$
\bae
\label{grad theta lip cal}
&\frac{d}{dt} \e \left| Y^{:,k}_t \right|^2 \\
=& \e \left[ 2\left\langle \nabla_{x} \mu(X_t^{\theta, x_1}, \theta) \frac{\partial X_t^{\theta, x_1}}{\partial \theta_k} + \frac{\partial \mu(X_t^{\theta, x_1}, \theta)}{\partial \theta_k} - \nabla_{x} \mu(X_t^{\theta, x_2}, \theta) \frac{\partial X_t^{\theta, x_2}}{\partial \theta_k} - \frac{\partial \mu(X_t^{\theta, x_2}, \theta)}{\partial \theta_k},\ Y^{:,k}_t \right\rangle \right]\\
+& \e \left[ \left| \nabla_{x} \sigma(X_t^{\theta, x_1}, \theta) \frac{\partial X_t^{\theta, x_1}}{\partial \theta_k} + \frac{\partial \sigma(X_t^{\theta, x_1}, \theta)}{\partial \theta_k} - \nabla_{x} \sigma(X_t^{\theta, x_2}, \theta) \frac{\partial X_t^{\theta, x_2}}{\partial \theta_k} - \frac{\partial \sigma(X_t^{\theta, x_2}, \theta)}{\partial \theta_k} \right|^2 \right] \\
\le& \e \left[ 2\left\langle \nabla_{x} \mu(X_t^{\theta, x_1}, \theta) Y^{:,k}_t,\ Y^{:,k}_t \right\rangle + 2\left| \nabla_{x} \sigma(X_t^{\theta, x_1}, \theta) Y^{:,k}_t \right|^2\right]
+ \beta \e |Y^{:,k}_t|^2 + C \e \left| X_t^{\theta, x_1} - X_t^{\theta, x_2}\right|^2 \\
\le& -\beta \e|Y^{:,k}_t|^2 + Ce^{-\beta t}|x_1 - x_2|^2,
\eae
which derives \eqref{grad theta lip}. Noting that from Assumption A\ref{f} we have $|\nabla f| \le C$ and thus 
\beq
\label{diff 2}
I_2 \le C \e\left| \nabla_\theta X_t^{\theta, x_1} - \nabla_\theta X_t^{\theta, x_2} \right| \le C e^{-\frac{\beta}{4}t} \left| x_1 - x_2 \right|.
\eeq
Therefore, \eqref{diff 1} and \eqref{diff 2} imply \eqref{partial theta}. 

Finally, by estimate \eqref{gradient theta}, \eqref{partial x}, \eqref{partial theta} and \eqref{grad theta bound}, we have
\beq
| \nabla_\theta \tilde f_{t_0}(t,x,\theta) | \le C e^{-\frac\beta4 t} \e | x-X_{t_0}^{\theta, x}| + Ce^{-\frac\beta2 t} \le C e^{-\frac\beta4 t} (1 + |x|), \quad \forall t_0 \ge 0.
\eeq
Let $t_0 \to \infty$ and by DCT, we have 
\beq
\left| \nabla_\theta \e f(X_t^{\theta, x}) - \nabla_\theta \e_{\pi_\theta}f(Y) \right| \le C e^{-\frac\beta4 t} (1 + |x|).
\eeq

The proof of \eqref{theta decay} for $i=2$ follows from the same idea. Actually, from \eqref{gradient theta} we know 
\bae
\label{gradient theta 2}
\left| \nabla^2_\theta \tilde f_{t_0}(t,x,\theta)\right| \le&\left| \nabla^2_\theta \hat f(t,x, \theta) - \e \left[ \nabla^2_\theta \hat f(t, X_{t_0}^{\theta, x}, \theta) \right] \right| \\
+& \left| \e \left[ \nabla_x \nabla_\theta \hat f(t, X_{t_0}^{\theta, x}, \theta) \nabla_\theta X^{\theta, x}_{t_0} \right] \right| \\
+& \left| \e \left\langle \nabla_\theta \nabla_x \hat f(t, X_{t_0}^{\theta, x}, \theta) + \nabla^2_x \hat f(t, X_{t_0}^{\theta, x}, \theta) \nabla_\theta X_{t_0}^{\theta, x}, \ \nabla_\theta X_{t_0}^{\theta, x} \right\rangle \right|\\
+& \left| \e \left[ \nabla_x \hat f(t, X_{t_0}^{\theta, x}, \theta) \nabla^2_\theta X_{t_0}^{\theta, x} \right] \right|\\
:=& I_3 + I_4 + I_5 + I_6.
\eae

First for the term $I_3$, note that
$$
\nabla_\theta \hat f(t, x, \theta) = \nabla_\theta \e f(X_t^{\theta, x}) = \e \left[ \nabla f(X_t^{\theta, x}) \nabla_\theta X_t^{\theta, x} \right],
$$
and thus
$$
\begin{aligned}
\nabla_\theta^2 \hat f(t,x,\theta) = \e \left\langle \nabla^2 f(X_t^{\theta, x})  \nabla_\theta X_t^{\theta, x},\  \nabla_\theta X_t^{\theta, x}  \right\rangle + \e \left[ \nabla f(X_t^{\theta,x}) \nabla_\theta^2 X_t^{\theta, x} \right],
\end{aligned}
$$
where $\nabla f(X_t^\theta) \nabla_\theta^2 f(X_t^\theta)$ is defined as:
\beq
\label{element wise}
\left[ \nabla f(X_t^\theta) \nabla_\theta^2 f(X_t^\theta) \right]_{m,n} = \nabla f(X_t^\theta) \frac{\partial^2 X_t^{\theta, x}}{\partial \theta_m \partial \theta_n}.
\eeq
Then for any $x_{1}, x_{2} \in \mathbb{R}^{d}$,
\bae
\label{decomposition}
&\left| \nabla_\theta^2 \hat f(t,x_1,\theta) - \nabla_\theta^2 \hat f(t,x_2,\theta) \right|\\
\le&  \left| \e \left\langle \nabla^2 f(X_t^{\theta, x_1}) \nabla_\theta X_t^{\theta, x_1},\ \nabla_\theta X_t^{\theta, x_1} \right\rangle 
- \left\langle \nabla^2 f(X_t^{\theta, x_2})  \nabla_\theta X_t^{\theta, x_2},\ \nabla_\theta X_t^{\theta, x_2} \right\rangle \right| \\
+& \left| \e \left[ \nabla f(X_t^{\theta,x_1}) \nabla_\theta^2 X_t^{\theta, x_1}
- \e \nabla f(X_t^{\theta,x_2}) \nabla_\theta^2 X_t^{\theta, x_2} \right] \right| \\
:=& I_{3,1} + I_{3,2}.
\eae

As in \eqref{grad theta cal}, by It\^{o}'s formula for any $k \in \{1,2,\cdots, \ell\}$
\bae
\label{grad theta 4}
&\frac{d}{dt} \e \left| \frac{\partial X_t^{\theta, x}}{\partial \theta_k} \right|^4 \\
\le& \e \left[ \left|\frac{\partial X_t^{\theta, x}}{\partial \theta_k}\right|^2 \left( 4\left\langle \nabla_x \mu(X^{\theta, x}_t, \theta)\frac{\partial X_t^{\theta, x}}{\partial \theta_i} + \frac{\partial \mu(X^{\theta, x}_t, \theta)}{\partial \theta_k}, \ \frac{\partial X_t^{\theta, x}}{\partial \theta_k} \right\rangle + 6\left|\nabla_x \sigma(X^{\theta, x}_t, \theta) \frac{\partial X_t^{\theta, x}}{\partial \theta_k} + \frac{\partial \sigma(X^{\theta, x}_t, \theta)}{\partial \theta_k}\right|^2 \right) \right] \\
\overset{(a)}{\le}& \e \left[ \left|\frac{\partial X_t^{\theta, x}}{\partial \theta_k}\right|^2 \left( -4\beta \left| \frac{\partial X_t^{\theta, x}}{\partial \theta_k} \right|^2 + 2\beta \left|\frac{\partial X_t^{\theta, x}}{\partial \theta_k}\right|^2 + C\left( \left| \frac{\partial \mu(X_t^{\theta, x}, \theta)}{\partial \theta_k} \right|^2 + \left| \frac{\partial \sigma(X_t^{\theta, x}, \theta)}{\partial \theta_k} \right|^2  \right) \right) \right] \\
\le& -2\beta \e \left| \frac{\partial X_t^{\theta, x}}{\partial \theta_k} \right|^4 + C \e \left| \frac{\partial X_t^{\theta, x}}{\partial \theta_k} \right|^2,
\eae
where step $(a)$ use the same calculations as in \eqref{trick}. Thus combining \eqref{grad theta bound} and \eqref{grad theta 4}, we have 
\beq
\label{grad theta bound 4}
\sup_{\theta \in \mathbb{R}^\ell, x \in \mathbb{R}^d} \left| \nabla_\theta X_t^{\theta, x} \right|^4 \le C, \quad \forall t \ge 0.
\eeq
Then for $I_{3,1}$, by direct computation, we know there exists $\gamma>0$ such that 
\bae
\label{I31}
I_{3,1} &\le \left| \e \left[ \left\langle \nabla^2 f(X_t^{\theta, x_1}) \nabla_\theta X_t^{\theta, x_1}, \ \nabla_\theta X_t^{\theta, x_1} \right\rangle 
- \left\langle \nabla^2 f(X_t^{\theta, x_2}) \nabla_\theta X_t^{\theta, x_1}, \ \nabla_\theta X_t^{\theta, x_1}\right\rangle \right] \right|\\
&+ \left| \e \left[ \left\langle \nabla^2 f(X_t^{\theta, x_2}) \nabla_\theta X_t^{\theta, x_1},\ \nabla_\theta X_t^{\theta, x_1} \right\rangle 
- \left\langle \nabla^2 f(X_t^{\theta, x_2})\nabla_\theta X_t^{\theta, x_2},\ \nabla_\theta X_t^{\theta, x_2} \right\rangle \right] \right| \\
&\le \e \left| \nabla^2 f(X_t^{\theta, x_1}) - \nabla^2 f(X_t^{\theta, x_2}) \right| \cdot \left| \nabla_\theta X_t^{\theta, x_1} \right|^2 + \e \left| \nabla^2 f(X_t^{\theta, x_2}) \right|\cdot \left| \nabla_\theta X_t^{\theta, x_1} - \nabla_\theta X_t^{\theta, x_2} \right| \cdot \left( \left| \nabla_\theta X_t^{\theta, x_1} \right| + \left| \nabla_\theta X_t^{\theta, x_2} \right| \right)\\
&\le C \left( \e \left| X_t^{\theta, x_1} -X_t^{\theta, x_2} \right|^2\right)^{\frac12} \cdot \left( \e \left| \nabla_\theta X_t^{\theta, x_1} \right|^4\right)^{\frac12} + C \left( \e \left| \nabla_\theta X_t^{\theta, x_1} - \nabla_\theta X_t^{\theta, x_1} \right|^2\right)^{\frac12} \cdot \left( \e\left| \nabla_\theta X_t^{\theta, x_1} \right|^2 + \e \left| \nabla_\theta X_t^{\theta, x_2} \right|^2 \right)^{\frac12}\\
&\overset{(a)}{\le} C e^{-\gamma t} |x_1-x_2|,
\eae
where step $(a)$ we use Lemma \ref{difference lemma}, \eqref{grad theta lip} and \eqref{grad theta bound 4}.

Under the assumptions A\ref{dissipative}-A\ref{high derivative} and using the same calculations as in \eqref{grad theta cal} and \eqref{grad theta lip cal}, it is easy to prove that
\beq
\label{grad theta 2 bound}
\sup_{x\in \mathbb{R}^d, \theta \in \mathbb{R}^\ell} \e \left| \nabla^2_\theta X_t^{\theta, x} \right|^2 \le C, \quad \forall t \ge 0,
\eeq
and there exists $\gamma>0$ such that 
\beq 
\label{grad theta 2 lip}
\sup_{ \theta \in \mathbb{R}^\ell} \e \left| \nabla^2_\theta X_t^{\theta, x_1} - \nabla^2_\theta X_t^{\theta, x_2} \right|^2 \le Ce^{-\gamma t} |x_1-x_2|,\quad \forall x_1, x_2 \in \mathbb{R}^d,\ t \ge0.
\eeq
Thus we have 
\beq
\label{I32}
I_{3,2} \le 
\left| \e \nabla f(X_t^{\theta,x_1}) \left( \nabla_\theta^2 X_t^{\theta, x_1}
- \nabla_\theta^2 X_t^{\theta, x_2} \right) \right| + \left| \e \left( \nabla f(X_t^{\theta,x_1}) - \nabla f(X_t^{\theta,x_2}) \right) \nabla_\theta^2 X_t^{\theta, x_2}  \right| \le Ce^{-\gamma t} |x_1-x_2|.
\eeq
Combining \eqref{decomposition}, \eqref{I31} and \eqref{I32}, we have 
\beq
\label{I3}
I_{3} \le C e^{-\gamma t}\left( \e \left|x - X_{t_0}^{\theta, x}\right|\right) \le C e^{-\gamma t} \left(1+|x|\right) .
\eeq

For the term $I_{4}$, note that
$$
\nabla_x \nabla_\theta \hat f(t ,x ,\theta) = \e \left\langle \nabla^2 f(X_t^{\theta, x}) \nabla_x X_t^{\theta, x},\ \nabla_\theta X_t^{\theta, x} \right\rangle + \e \nabla f(X_t^{\theta, x}) \nabla_x \nabla_\theta X_t^{\theta, x}.
$$
By Lemma \ref{difference lemma} and \eqref{grad theta lip}, we have 
$$
\sup_{x \in \mathbb{R}^{d}, \theta \in \mathbb{R}^{m}} \left(\e \left|\nabla_x X_t^{\theta, x}\right|^{2} + \e \left| \nabla_x \nabla_\theta X_t^{\theta, x} \right|^{2}\right) \le C e^{-\frac{\beta}{2}t}, \quad t \ge 0, 
$$
which derives that 
\beq
\left| \nabla_x \nabla_\theta \hat f(t,x,\theta) \right| \le C e^{-\frac\beta 4 t}. 
\eeq
Hence, it is easy to see that 
\beq
\label{I4}
I_{4} \le C e^{-\frac{\beta}{4}t} \e \left| \nabla_\theta X^{\theta, x}_{t_0}\right| \le C e^{-\frac{\beta}{4}t}.
\eeq

For the term $I_5$, by a similar argument as for $I_4$, we have
$$
\sup_{x \in \mathbb{R}^{d}, \theta \in \mathbb{R}^{m}} \e \left|\nabla^2_x X_t^{\theta, x}\right|^{2} \le C e^{-\beta t}, \quad t \ge 0.
$$
and then
\beq
\label{Xdecay}
\left|\nabla^2_{x} \hat{f}(t, x, \theta)\right| + \left|\nabla_{x}\nabla_\theta \hat{f}(t, x, \theta)\right| \le C e^{-\frac{\beta t}{4}} .
\eeq
Hence, 
\beq
\label{I5}
I_{5} \le C e^{-\frac{\beta t}{4}}.
\eeq
Finally for $I_{6}$, from \eqref{partial x} and \eqref{grad theta 2 bound} we can get
\beq
\label{I6}
I_{6} \le C e^{-\frac{\beta t}{2}}.
\eeq
Hence, combining \eqref{I3}, \eqref{I4}, \eqref{I5} and \eqref{I6} there exists $\gamma>0$ such that 
\beq
| \nabla^2_\theta \tilde f_{t_0}(t,x,\theta) | \le Ce^{-\gamma t} \le C e^{-\gamma t} (1 + |x|), \quad \forall t_0 \ge 0.
\eeq
Let $t_0 \to \infty$, we have
\beq
\left| \nabla^2_\theta \e f(X_t^{\theta, x}) - \nabla^2_\theta \e_{\pi_\theta}f(Y) \right| \le C e^{-\gamma t} (1 + |x|).
\eeq

\begin{itshape} Proof of (\romannumeral2) and (\romannumeral3).\end{itshape}
First note that by Lemma \ref{moment stable}
$$
\left| \e f(X_t^{\theta, x}) \right| \le C(1+\e\left|X_t^{\theta, x}\right|) \le C\left(1+e^{-\frac\beta2 t} |x|\right),
$$
which together with Lemma \ref{ergodic} derive
\beq
\left| \e_{\pi_\theta} f(Y) \right| = \lim\limits_{t\to \infty} \left| \e f(X_t^{\theta, x}) \right| \le C.
\eeq
By \eqref{grad theta bound}, we have 
\beq
\left| \nabla_\theta \e f(X_t^{\theta, x}) \right| = \left| \e \nabla f(X_t^{\theta, x}) \nabla_\theta X_t^{\theta, x} \right| \le C,
\eeq
which together with \eqref{theta decay} derive
\beq
\left| \nabla_\theta \e_{\pi_\theta} f(Y) \right|= \left| \lim\limits_{t\to \infty} \nabla_\theta \e f(X_t^{\theta, x}) \right| \le C.
\eeq
Similarly we can get the bound for $\left| \nabla^2_\theta \e f(X_t^{\theta, x}) \right|$ and derive \eqref{invariant density} for $i=2$.

Then for \eqref{theta x decay}, the case for $j =1$ directly follows from \eqref{partial x}, \eqref{partial theta} and the case for $i=0, j=2$ follows from \eqref{Xdecay}. Finally, it is easy to prove there exists $\gamma>0$ such that for $i,j \in\{ 0,1 \}$
\bae
\sup_{ \theta \in \mathbb{R}^\ell} \e \left| \nabla^j_x \nabla_\theta^i X_t^{\theta, x_1} - \nabla^j_x \nabla_\theta^i X_t^{\theta, x_2} \right|^2 \le Ce^{-\gamma t} |x_1-x_2|,\quad \forall x_1, x_2 \in \mathbb{R}^d,\ t \ge0.
\eae
Thus by the same method to estimate \eqref{decomposition}, we can get there exists $\eta>0$ such that 
\bae
&\left| \nabla_x \nabla_\theta \hat f(t,x_1,\theta) - \nabla_x \nabla_\theta \hat f(t,x_2,\theta) \right|\\
\le&  \left| \e \left[ \left\langle \nabla^2 f(X_t^{\theta, x_1}) \nabla_x X_t^{\theta, x_1},\ \nabla_\theta X_t^{\theta, x_1} \right\rangle 
- \left\langle \nabla^2 f(X_t^{\theta, x_2}) \nabla_x X_t^{\theta, x_2},\ \nabla_\theta X_t^{\theta, x_2} \right\rangle \right] \right| \\
+& \left| \e \left[ \nabla f(X_t^{\theta,x_1}) \nabla_x \nabla_\theta X_t^{\theta, x_1} 
- \nabla f(X_t^{\theta,x_2}) \nabla_x \nabla_\theta X_t^{\theta, x_2} \right] \right| \\
\le& C e^{-\gamma t}|x_1 - x_2|,
\eae
which derives the case for $i=1, j=2$ and thus the proof is completed.

\bibliographystyle{plain}

\bibliography{cite}

\begin{thebibliography}{10}

\bibitem{bertsekas2000gradient}
Dimitri~P Bertsekas and John~N Tsitsiklis.
\newblock Gradient convergence in gradient methods with errors.
\newblock {\em SIAM Journal on Optimization}, 10(3):627--642, 2000.

\bibitem{bezemek2020large}
Zachary Bezemek and Konstantinos Spiliopoulos.
\newblock Large deviations for interacting multiscale particle systems.
\newblock {\em arXiv preprint arXiv:2011.03032}, 2020.

\bibitem{bezemek2022moderate}
Zachary Bezemek and Konstantinos Spiliopoulos.
\newblock Moderate deviations for fully coupled multiscale weakly interacting
  particle systems.
\newblock {\em arXiv preprint arXiv:2202.08403}, 2022.

\bibitem{bezemek2022rate}
Zachary Bezemek and Konstantinos Spiliopoulos.
\newblock Rate of homogenization for fully-coupled mckean-vlasov sdes.
\newblock {\em arXiv preprint arXiv:2202.07753}, 2022.

\bibitem{carmona2016lectures}
Ren{\'e} Carmona.
\newblock {\em Lectures on BSDEs, stochastic control, and stochastic
  differential games with financial applications}.
\newblock SIAM, 2016.

\bibitem{carmona2021convergence}
Ren{\'e} Carmona and Mathieu Lauri{\`e}re.
\newblock Convergence analysis of machine learning algorithms for the numerical
  solution of mean field control and games i: the ergodic case.
\newblock {\em SIAM Journal on Numerical Analysis}, 59(3):1455--1485, 2021.

\bibitem{casella2001empirical}
George Casella.
\newblock Empirical bayes gibbs sampling.
\newblock {\em Biostatistics}, 2(4):485--500, 2001.

\bibitem{cerrai2009averaging}
Sandra Cerrai and Mark Freidlin.
\newblock Averaging principle for a class of stochastic reaction--diffusion
  equations.
\newblock {\em Probability theory and related fields}, 144(1):137--177, 2009.

\bibitem{jin2021continuous}
Kexin Jin, Jonas Latz, Chenguang Liu, and Carola-Bibiane Sch{\"o}nlieb.
\newblock A continuous-time stochastic gradient descent method for continuous
  data.
\newblock {\em arXiv preprint arXiv:2112.03754}, 2021.

\bibitem{li2022poisson}
Yun Li, Fuke Wu, and Longjie Xie.
\newblock Poisson equation on wasserstein space and diffusion approximations
  for mckean-vlasov equation.
\newblock {\em arXiv preprint arXiv:2203.12796}, 2022.

\bibitem{pardoux2003poisson}
E~Pardoux and A~Yu Veretennikov.
\newblock On poisson equation and diffusion approximation 2.
\newblock {\em The Annals of Probability}, 31(3):1166--1192, 2003.

\bibitem{pardoux2001poisson}
E~Pardoux and Yu~Veretennikov.
\newblock On the poisson equation and diffusion approximation 1.
\newblock {\em The Annals of Probability}, 29(3):1061--1085, 2001.

\bibitem{pavliotis2014stochastic}
Grigorios~A Pavliotis.
\newblock {\em Stochastic processes and applications: diffusion processes, the
  Fokker-Planck and Langevin equations}, volume~60.
\newblock Springer, 2014.

\bibitem{pavliotis2007parameter}
Grigorios~A Pavliotis and AM~Stuart.
\newblock Parameter estimation for multiscale diffusions.
\newblock {\em Journal of Statistical Physics}, 127(4):741--781, 2007.

\bibitem{pavliotis2008multiscale}
Grigorios~A Pavliotis and Andrew~M Stuart.
\newblock Multiscale methods, volume 53 of texts in applied mathematics, 2008.

\bibitem{prevot2007concise}
Claudia Pr{\'e}v{\^o}t and Michael R{\"o}ckner.
\newblock {\em A concise course on stochastic partial differential equations},
  volume 1905.
\newblock Springer, 2007.

\bibitem{rockner2021strong}
Michael R{\"o}ckner, Xiaobin Sun, and Yingchao Xie.
\newblock Strong convergence order for slow--fast mckean--vlasov stochastic
  differential equations.
\newblock In {\em Annales de l'Institut Henri Poincar{\'e}, Probabilit{\'e}s et
  Statistiques}, volume~57, pages 547--576. Institut Henri Poincar{\'e}, 2021.

\bibitem{rockner2021diffusion}
Michael R{\"o}ckner and Longjie Xie.
\newblock Diffusion approximation for fully coupled stochastic differential
  equations.
\newblock {\em The Annals of Probability}, 49(3):1205--1236, 2021.

\bibitem{sharrock2020two}
Louis Sharrock and Nikolas Kantas.
\newblock Two-timescale stochastic gradient descent in continuous time with
  applications to joint online parameter estimation and optimal sensor
  placement.
\newblock {\em arXiv preprint arXiv:2007.15998}, 2020.

\bibitem{sirignano2017stochastic}
Justin Sirignano and Konstantinos Spiliopoulos.
\newblock Stochastic gradient descent in continuous time.
\newblock {\em SIAM Journal on Financial Mathematics}, 8(1):933--961, 2017.

\bibitem{sirignano2020stochastic}
Justin Sirignano and Konstantinos Spiliopoulos.
\newblock Stochastic gradient descent in continuous time: A central limit
  theorem.
\newblock {\em Stochastic Systems}, 10(2):124--151, 2020.

\bibitem{sirignano2021online}
Justin Sirignano and Konstantinos Spiliopoulos.
\newblock Online adjoint methods for optimization of pdes.
\newblock {\em arXiv preprint arXiv:2101.09621}, 2021.

\bibitem{surace2018online}
Simone~Carlo Surace and Jean-Pascal Pfister.
\newblock Online maximum-likelihood estimation of the parameters of partially
  observed diffusion processes.
\newblock {\em IEEE transactions on automatic control}, 64(7):2814--2829, 2018.

\bibitem{sutton2018reinforcement}
Richard~S Sutton and Andrew~G Barto.
\newblock {\em Reinforcement learning: An introduction}.
\newblock MIT press, 2018.

\bibitem{wang2021global}
Ziheng Wang and Justin Sirignano.
\newblock Global convergence of the ode limit for online actor-critic
  algorithms in reinforcement learning.
\newblock {\em arXiv preprint arXiv:2108.08655}, 2021.

\bibitem{wang2022continuous}
Ziheng Wang and Justin Sirignano.
\newblock Continuous-time stochastic gradient descent for optimizing over the
  stationary distribution of stochastic differential equations.
\newblock {\em arXiv preprint arXiv:2202.06637}, 2022.

\bibitem{weinan2003multiscale}
E~Weinan and Bjorn Engquist.
\newblock Multiscale modeling and computation.
\newblock {\em Notices of the AMS}, 50(9):1062--1070, 2003.

\bibitem{weinan2005analysis}
E~Weinan, Di~Liu, and Eric Vanden-Eijnden.
\newblock Analysis of multiscale methods for stochastic differential equations.
\newblock {\em Communications on Pure and Applied Mathematics},
  58(11):1544--1585, 2005.

\end{thebibliography}

\end{document}